\newcommand{\hsp}{\hspace{10pt}}
\titleformat{\chapter}[hang]{\LARGE\bfseries}{\thechapter\hsp|\hsp}{0pt}{\LARGE\bfseries}
\titleformat{\section}[hang]{\large\bfseries}{\thesection\hsp|\hsp}{0pt}{\large\bfseries}
\titleformat{\subsection}[hang]{\large\bfseries}{\thesubsection\hsp|\hsp}{0pt}{\large\bfseries}
\newtheoremstyle{mystyle}%                % Name
{}%                                     % Space above
{.5cm}%                                     % Space below
{\normalfont}%                           % Body font
{}%                                     % Indent amount
{\bfseries}%                            % Theorem head font
{}%                                    % Punctuation after theorem head
{1em}%                                    % Space after theorem head, ' ', or \newline
{\thmname{#1}\thmnumber{ #2} \ifthenelse{\equal{#3}{}}{}{{\normalfont (\thmnote{#3})}}}%      % Theorem head spec (can be left empty, meaning `normal')
\theoremstyle{mystyle}
\newtheorem{Example}{Example}[section]
\newtheorem{Definition}[Example]{Definition}
\newtheorem{Theorem}[Example]{Theorem}
\newtheorem{Remark}[Example]{Remark}
\newtheorem{Lemma}[Example]{Lemma}
\newtheorem{Corollary}[Example]{Corollary}
\newcommand{\NN}{\mathbb{N}}
\newcommand{\RR}{\mathbb{R}}
\DeclareMathOperator{\EE}{\mathbb{E}}
\DeclareMathOperator{\PP}{\mathbb{P}}
\DeclareMathOperator{\Var}{Var}
\DeclareMathOperator{\Cov}{Cov}
\DeclareMathOperator{\Corr}{Corr}
\DeclareMathOperator{\sgn}{sgn}
\newcommand{\dd}{\mathrm{d}}
\newcommand*{\CFU}[1]{
	\ifthenelse{\equal{#1}{}}{\ensuremath{\varphi_{U}}}{\ensuremath{\varphi_{U}(#1)}}
} 
\newcommand*{\CFV}[2]{
	\ifthenelse{\equal{#1}{+}}{
		\ifthenelse{\equal{#2}{+}}{
			\ensuremath{\varphi_{UV}(s_1,s_2)}
		}
		{
			\ensuremath{\varphi_{UV}(s_1,-s_2)}
		}
	}{	
		\ifthenelse{\equal{#1}{-}}{
			\ifthenelse{\equal{#2}{+}}{
				\ensuremath{\varphi_{UV}(-s_1,s_2)}
			}
			{
				\ensuremath{\varphi_{UV}(-s_1,-s_2)}
			}
		}{
			\ifthenelse{\equal{#1} {} } {\ensuremath{\varphi_{UV}} }{\ensuremath{\varphi_{UV}(#1,#2)}}
		}	
	}
} 
\newcommand*{\CFA}[1]{
	\ifthenelse{\equal{#1}{}}{\ensuremath{\varphi}}{\ensuremath{\varphi(#1)}}
} 
\newcommand*{\CFB}[2]{
	\ifthenelse{\equal{#1}{+}}{
		\ifthenelse{\equal{#2}{+}}{
			\ensuremath{\varphi_t(s_1,s_2)}
		}
		{
			\ensuremath{\varphi_t(s_1,-s_2)}
		}
	}{	
		\ifthenelse{\equal{#1} {} } {\ensuremath{\varphi_t} }{\ensuremath{\varphi_t(#1,#2)}}
	}
} 
\newenvironment*{pf}
{\textit{Proof:}}
{\qed}
\newcommand*{\ANA}[1]{\ensuremath{{\Vert#1\Vert}^{\alpha}_{\alpha}}}
\newcommand*{\lp}[2]{\ensuremath{{\mathcal{L}}^{#1}(#2)}}
\newcommand*{\re}[1]{
	\ifthenelse{\equal{#1}{}}{
		\ensuremath{\text{Re}}	
	}{
		\ensuremath{\text{Re} \{ \psi(s_#1) \} } 
	}
}
\newcommand*{\im}[1]{
	\ifthenelse{\equal{#1}{}}{
		\ensuremath{\text{Im}}
		
	}{
		\ensuremath{\text{Im} \{ \psi(s_#1) \} } 
	}
}
\title{Long Range Dependence for  Stable Random Processes}
\author[1]{Vitalii Makogin} %, Marco Oesting, Albert Rapp, Evgeny Spodarev
\author[2]{Marco Oesting}
\author[1, $\star$]{Albert Rapp}
\author[1]{Evgeny Spodarev}
\affil[1]{{\footnotesize Ulm University, E-mails: vitalii.makogin@uni-ulm.de, albert.rapp@uni-ulm.de, evgeny.spodarev@uni-ulm.de}}
\affil[2]{{\footnotesize University of Siegen, E-Mail: oesting@mathematik.uni-siegen.de}}
\affil[$\star$]{\footnotesize corresponding author}
\date{\today}
\definecolor{Marco}{rgb}{1,0.49,0}
\definecolor{Albert}{rgb}{0.11, 0.67, 0.84}
\definecolor{Vitalii}{rgb}{1, 0.0, 0.13}
\definecolor{Evgeny}{rgb}{0.25, 0.41, 0.88}
\begin{document}
	
	\maketitle
	
	\begin{abstract}
		%\noindent
		We investigate long and short memory in $\alpha$-stable moving averages and max-stable processes with $\alpha$-Fr\'echet marginal distributions.
		As these processes are heavy-tailed, we rely on the notion of long range dependence suggested by \cite{Kulik_Spodarev_LRD} based on the covariance of excursions. Sufficient conditions for the long and short range dependence of $\alpha$-stable moving averages are proven in terms of integrability of the corresponding kernel functions. For max-stable processes, the extremal coefficient function is used to state a necessary and sufficient condition for long range dependence.
		
		AMS Subj. Class.: 60G10, 60G52, 60G70.
	\end{abstract}
	
	%%%%%%%%%%%%%%%%%%%% INTRODUCTION %%%%%%%%%%%%%%%%%%%%%%%%%%%%%%%%%%%%%%%%%%%%%%%%%%%%%%%
	%%%%%%%%%%%%%%%%%%%%%%%%%%%%%%%%%%%%%%%%%%%%%%%%%%%%%%%%%%%%%%%%%%%%%%%%%%%%%%%%%%%%%%%%%
	%%%%%%%%%%%%%%%%%%%%%%%%%%%%%%%%%%%%%%%%%%%%%%%%%%%%%%%%%%%%%%%%%%%%%%%%%%%%%%%%%%%%%%%%%
	
	\allowdisplaybreaks
	\section{Introduction}
	\label{sec: LRD}

	The occurrence of long memory in time series has been known for a long time starting from the work of \cite{hurst51}. Since then, this phenomenon 
		has been observed and studied in applications in various fields including 
		biophysical data (\cite{burnecki2012farima}), network traffics (\cite{Pilip2016}), neuroscience (\cite{botcharova2014markers}), and geosciences (\cite{Montillet2015}), etc. 
		A typical example in financial applications (see e.g. \cite{cheung1995search} and \cite{panas2001estimating}) is a stationary solution of a autoregressive moving average FARIMA($p,d,q$) process with $\alpha-$stable innovations.
		In light of the variety of applications, a wide range of statistical models and methods for long range dependent processes has been developed, see, for instance, \cite{Avram1986}, \cite{Kasahara1988}, \cite{Kokoszka1996}  for classical ones, and \cite{Magdziarz2007} \cite{Beran2012}, \cite{Jach2012}, \cite{Koul2018} for more recent developments.  For
		a broader overview, we recommend the books of \cite{Doukhan2003}, \cite{beran:kulik:2013}, and \cite{Samorod16}.
		These instruments rely on the explicit definition of long range dependence (LRD, for short) of a stationary time series or, more generally, a stationary stochastic process  $X = \{X(t), \, t \in T\}$. Here and throughout this paper, stationarity is understood in the sense that all finite-dimensional distributions of $X$ are invariant
	under translations. There are many definitions of LRD in the literature depending on the class of processes to which $X$ belongs. For instance, if $X$ has a finite variance the following definition is classical, cf. \cite[P. 194-195]{Samorod16}:   
	\begin{Definition}
		\label{Def: LRD_classical}
		A stationary stochastic process $X = \{ X(t), \, t \in T \}$ on some domain $T \subset \mathbb{R}$ with $\mathbb{E}\big[\vert X(0)\vert^2\big] < \infty$ is called {\it long range dependent} if 
		\begin{align*}
		\int_{T} \vert C(t) \vert \, \dd t = \infty, 
		\end{align*}
		where $C(t) = \Cov(X(0), X(t))$, $t \in T$, is its covariance function. For processes in discrete time, the integral above should be changed to a sum.
		
		Also, $X$ is {\it antipersistent}  if $\int_{T} \vert C(t) \vert \, \dd t <\infty$, $\int_{T}  C(t)  \, \dd t = 0$, and {\it short range dependent}, otherwise.
	\end{Definition}
	Alternative  definitions of long memory rely e.g. on  the unboundedness  of the spectral density of $X$ at zero, growth comparisons of partial sums, phase transition in limit theorems for sums or maxima, etc., cf. \cite{HeydeYang97,DehlPhil02,Samorod04,Lavancier06,GiraitisKoulSurg12,beran:kulik:2013,Paul16,Samorod16, Jach2012}. 
	
	Many of these approaches fail for heavy-tailed stochastic processes whose variance does not exist. Such processes occur, for instance, in modelling of network data, in finance and in insurance (see e.g. \cite{Kokoszka1997} who call the FARIMA($p,d,q$) process with  $\alpha$-stable innovations long range dependent  if $d \in (0,1-1/\alpha)$ or \cite{EKM97, resnick-2007}).  In order to allow for the analysis of long memory behaviour in a broader setting, \cite{Kulik_Spodarev_LRD} propose to consider the covariance of indicator functions of excursions and introduce
	\begin{Definition}[]
		\label{def: LRD_Spodarev}
		A real-valued stationary stochastic process $X = \{ X(t), \, t \in T \}$ where $T$ is an unbounded subset of $\mathbb{R}$ is short range dependent (SRD) if
		\begin{align}
		\int_{T} &\int_{\mathbb{R}} \int_{\mathbb{R}} \Big\vert \Cov (\mathds{1} \{ X(0) > u \}, \mathds{1} \{ X(t) > v \})\Big\vert \, \mu(\dd u) \, \mu(\dd v)\, \dd t < \infty \label{eq: LRD_Spodarev}
		\end{align}
		for any finite measure $\mu$ on $\mathbb{R}$. Otherwise, i.e. if there exists a finite measure $\mu$ such that the integral in inequality $\eqref{eq: LRD_Spodarev}$ is infinite, $X$ is long range dependent. For  stochastic processes in discrete time, the integral $\int_T\, \dd t$ should be replaced by the summation $\sum_{t \in T: \, t\neq 0}$.
	\end{Definition}
	
	One major advantage of this  definition is that the above covariance exists in any case due to the boundedness of the indicators. Furthermore, the definition turns out to be useful as it offers the applicability of limit theorems for certain functionals of the process of interest.
	
	In practice, however, the computation of the multiple integral in \eqref{eq: LRD_Spodarev} might prove to be tricky. Therefore, we restrict ourselves here to the wide class of positively associated stochastic processes, including the class of infinitely divisible moving average processes with nonnegative kernels \cite[Chapter 1, Theorem 3.27]{Bulinski_Shashkin}. This will allow us to eliminate the absolute value in \eqref{eq: LRD_Spodarev}.
	
	To introduce the notion of positive association, we need the class $\mathcal{M}(n)$ of real-valued bounded coordinate-wise nondecreasing Borel functions on $\mathbb{R}^n$, $n \in \mathbb{N}$. For a real-valued stochastic process $X = \{ X(t), \, t \in T \}$ and a set $I \subset T$, we denote $X_I = \{ X(t), \, t \in I \}$. \
	\begin{Definition}[]
		\label{def: association}
		A real-valued stochastic process $X = \{ X(t), \, t \in T \}$  is {\it positively associated} if $\Cov(f(X_I), g(X_J)) \geq 0$	for any disjoint finite subsets $I,J \subset T$ and all functions $f \in \mathcal{M}(\vert I \vert)$ and $g \in \mathcal{M}(\vert J \vert)$.
	\end{Definition}
	
	By setting $I = \{ 0 \}$ and $J = \{ t \}$ for $t \neq 0$, $f(x) = \mathds{1}\{ x > u \}$ and $g(x) = \mathds{1}\{ x > v \}$ for $u,v \in \RR$, we have $f \in \mathcal{M}(\vert I \vert)$ and $g \in \mathcal{M}(\vert J \vert)$. Consequently, for a positively associated stochastic process $X$, it holds $\Cov (\mathds{1} \{ X(0) > u \}, \mathds{1} \{ X(t) > v \}) = \Cov(f(X_I), g(X_J))\ge 0$, i.e.\ the absolute value in \eqref{eq: LRD_Spodarev} can be omitted.
	\medskip
	
	In this paper, we consider two important subclasses of positively associated stationary processes that satisfy certain stability properties. More precisely, we study $\alpha$-stable moving averages and max--stable processes with $\alpha$-Fr\'echet marginals. As these processes are heavy-tailed, the classical definition of LRD (\Cref{Def: LRD_classical})
	does not apply. Instead, we check \Cref{def: LRD_Spodarev}.

	With regard to this endeavor, we first establish a general framework to compute the double integral $\int_{\mathbb{R}} \int_{\mathbb{R}} \Cov (\mathds{1} \{ X(0) > u \}, \mathds{1} \{ X(t) > v \})\, \mu(\dd u)\, \mu(\dd v)$ by inverting the univariate characteristic function $\CFA{s}$ of $X(0)$ and the bivariate characteristic function $\CFB{+}{+}$ of $(X(0), X(t))$. Thus, our \Cref{thm: SRD and LRD} yields
	\begin{align*}
	&\int_{\mathbb{R}} \int_{\mathbb{R}} \Cov (\mathds{1} \{ X(0) > u \}, \mathds{1} \{ X(t) > v \})\, \mu(\dd u)\, \mu(\dd v) \\[2mm]
	&= \frac{1}{2 \pi^2} \int_{\mathbb{R}_+} \int_{\mathbb{R}_+} \frac{1}{s_1s_2} \re{} \bigg\{ \big( \CFB{+}{-} - \CFA{s_1}\CFA{-s_2} \big) \overline{\psi(s_1)}\psi(s_2) \bigg\}  \\[2mm]
	%%%%%%%%%%%%%%%%%%%%%%%%%%%%%%%%%%%%%%%%%%
	&\hspace{1cm} - \frac{1}{s_1s_2} \re{} \bigg\{ \big( \CFB{+}{+} - \CFA{s_1}\CFA{s_2} \big) \overline{\psi(s_1)\psi(s_2)} \bigg\} \, \dd s_1 \, \dd s_2
	\end{align*}
	where $\psi(s) = \int_{\mathbb{R}} \exp \{ isx \}\, \mu(dx)$ is the Fourier transform of measure $\mu$. \\
	Integrating this relation with respect to $t$ will establish short or long range dependence according to  \Cref{def: LRD_Spodarev}. Subsequently, we will apply this result to get the LRD of {\it symmetric $\alpha$-stable (S$\alpha$S) moving averages} which are defined as follows. 
	%\textcolor{Marco}{(MORE REFERENCES AND APPLICATIONS?)}
	\begin{Definition}[\cite{SamorodTaqqu94}]
		\label{def: SaS MA}
		Let $m$ be a measurable function with $m \in \lp{\alpha}{\mathbb{R}}$, $\alpha \in (0,2)$.  Then, a S$\alpha$S moving average process with parameter $\alpha \in (0,2)$ and kernel function $m$ is a stochastic process $X = \{ X(t), \, t \in \mathbb{R} \}$ defined by
		\begin{align}
		X(t) = \int_{\mathbb{R}} m(t-x)\ \Lambda(dx), \quad t \in \mathbb{R}, \label{eq: SASMA}
		\end{align} 
		where $\Lambda$ is a S$\alpha$S random measure with Lebesgue control measure.
	\end{Definition}
	Here and throughout the paper, we use the notation $m \in \lp{p}{A}$, $p>0$, to imply that $ \int_A \vert m(x)\vert ^p\, \dd x < \infty$.

	Regarding the SRD/LRD of the process $X$ given in \eqref{eq: SASMA}, our main result relies on the notion of $\alpha$--spectral covariance $\rho_t = \int_{\mathbb{R}} (m(-x)m(t-x))^{\alpha/2} \, \dd x$, $t \in \mathbb{R}$,  where $m(x)\ge 0$, $x\in\mathbb{R}$. The  $\alpha$--spectral covariance was first introduced by \cite{Paulauskas1976} and its properties were studied in \cite{Damarackas2014} and \cite{Damarackas2017}. In  \cite{Paul16}, it was discussed how the integrability of $\rho_t$ can be used for the definition of the memory property. Here, we establish by \Cref{thm: SRD_rho} that $X$ is short range dependent if $\rho_t \in \lp{1}{\mathbb{R}}$ or, equivalently, $m \in \lp{\alpha/2}{\mathbb{R}}$. Also, \Cref{thm: LRD_min_m} establishes long range dependence if $\int_{\mathbb{R}} \int_{\mathbb{R}} (m^\alpha(x) \wedge m^\alpha(t)) \, \dd x \, \dd t = \infty$ where $a\wedge b$ is the minimum of $a$ and $b$. These results hold also for  $\alpha$-stable linear time series if integrals are replaced by sums. 
	
	To put our results into context, one may refer to other research and discussion on memory properties of $\alpha-$stable processes such as \cite{Rachev2002}, \cite{Maejima2003}, \cite{Samorod04}.
	Also, we demonstrate how our findings are meaningful in practice by detecting LRD in a real world data set consisting of daily log-returns based on the opening price of the Intel corporation share.
	
	%Since this process is closely related to the symmetric $\alpha$-stable moving average, we find a similar function $\rho_t = \big( \sum_{j=-\infty}^{\infty} {\vert a_j \vert}^{\alpha} \big)^{-1} \sum_{j=-\infty}^{\infty} \vert a_{-j} a_{t-j} \vert^{\alpha/2}$ and we use Theorems \ref{thm: SRD_linear} establish short range dependence if $\sum_{t=-\infty}^{\infty} \rho_t(\rho_0^2-\rho_t^2)^{-1/2}  < \infty$ or $\sum_{j=-\infty}^{\infty} \vert a_j \vert^{\alpha/2} < \infty$. Similarly, \Cref{thm: LRD_min_a} states that $\sum_{j=-\infty}^{\infty} \sum_{t=-\infty}^{\infty}  (a_j^\alpha \wedge a_t^\alpha)  = \infty$ implies that $Y$ is long range dependent.
	\medskip
	
	Analogously to $\alpha$-stable processes, which have become popular as limits of rescaled sums of stochastic processes, \emph{max-stable processes} have become a widely used concept in extreme value analysis occurring as limiting models for maxima. Thus, they have found applications in various areas such as meteorology \citep[see e.g.]{coles93, buishand-etal-08, davison-gholamrezaee-2012, oesting-friederichs-schlather-2017}, hydrology \citet{asadi-davison-engelke-18} and finance \citep{zhang-smith-2010}. Max-stable processes are defined as follows.
	
	\begin{Definition}
		A real-valued stochastic process $X=\{X(t), \ t \in T\}$ is called a {\it max-stable process} if, for all $n \in \NN$, there exist functions $a_n: T \to (0,\infty)$ and $b_n : T \to \RR$ such that
		$$ \left\{ \max_{i=1}^n \frac{X_i(t) - b_n(t)}{a_n(t)}, \, t \in T \right\}
		\stackrel{d}{=} \{X(t), \, t \in T \}, $$
		where the processes $X_i$, $i \in \NN$, are independent copies of $X$, and $\stackrel{d}{=}$ means equality in distribution. 
		If the index set $T$ is finite, $X$ is also called a {\it max-stable vector}. 
	\end{Definition}	
	
	It follows from the univariate extreme value theory that the marginal 
	distributions of a max-stable process are either degenerate or follow a
	Fr\'echet, Gumbel or Weibull law. While covariances always exist in the Gumbel and Weibull case and, thus, the classical notion of long-range dependence applies, we will consider the case when $X$ is a stationary max-stable process with $\alpha$-Fr\'echet marginal distributions, i.e.\ $\PP(X(t) \leq x) = \exp(-x^{-\alpha})$ for all $x > 0$ and some $\alpha > 0$ and all $t \in T$. Here, covariances do not exist if $\alpha \leq 2$.

	In combination with Definition \ref{def: LRD_Spodarev}, a well-established dependence measure for max--stable stochastic processes allows for an easily tractable condition for short and long memory, respectively.
	More specifically, we use the pairwise extremal coefficient $\{\theta_t, \ t \in \RR\}$ defined via the relation $\PP(X(0) \leq x, \ X(t) \leq x) = \PP(X(0) \leq x)^{\theta_t}$, which holds for all $x > 0$, to show  that a stationary max-stable process with $\alpha$-Fréchet marginal distributions is long range dependent if and only if $\int_{\mathbb{R}} (2-\theta_t) \, \dd t = \infty$ (cf.~\Cref{prop:lrd}).
	\medskip
	
	To summarize, our paper is structured as follows: \Cref{sec: Vitalii} establishes the framework to invert the bivariate characteristic functions.
	In \Cref{sec: AlphaStable}, we make use of this framework to find conditions for long range dependence of symmetric $\alpha$-stable moving averages and linear time series, while, in \Cref{sec: MaxStable}, we investigate long range dependence of a stationary max-stable process with $\alpha$-Fréchet marginals. Finally, we model the daily log-returns of an Intel corporation share by a S$\alpha$S moving average $X$ and show that $X$ is LRD in \Cref{sec: data}. 
	For the sake of legibility, some of the proofs have been left out of the main part of this paper. They can be found in the Appendix.
	
	%%%%%%%%%%%%%%%%%%%% INVERTING CHAR FCT %%%%%%%%%%%%%%%%%%%%%%%%%%%%%%%%%%%%%%%%%%%%%%%%%
	%%%%%%%%%%%%%%%%%%%%%%%%%%%%%%%%%%%%%%%%%%%%%%%%%%%%%%%%%%%%%%%%%%%%%%%%%%%%%%%%%%%%%%%%%
	%%%%%%%%%%%%%%%%%%%%%%%%%%%%%%%%%%%%%%%%%%%%%%%%%%%%%%%%%%%%%%%%%%%%%%%%%%%%%%%%%%%%%%%%%
	
	\section{From Characteristic Function to Covariance of Indicators}
	\label{sec: Vitalii}
	
	In this section, we express  the covariance of indicators of excursions  of  random variables above some levels $u,v$ through their uni- and bivariate characteristic functions. Notice that for random variables $U$ and $V$ it holds that
	\begin{align}
	\Cov (\mathds{1} \{ U > u \}, \mathds{1} \{ V > v \}) = \mathbb{P}(U \leq u, V \leq v) - \mathbb{P}(U \leq u)\mathbb{P}(V \leq v). \label{lem: Covariance with joint and marginal distr}
	\end{align}
	
	\allowdisplaybreaks
	%The proof of the following result can be found in the Appendix.
	
	%\begin{Theorem}[]
	%	\label{thm: Rewriting the Covariance}
	%	Suppose $U$ and $V$ are absolutely continuous and identically distributed random variables with marginal and joint characteristic functions of $U$ and $V$ are denoted by $\CFU{}$ and $\CFV{}{}$, respectively. Then, for all $u,v \in \mathbb{R}$
	%	\begin{align}
	%	\Cov (\mathds{1} &\{ U > u \}, \mathds{1} \{ V > v \}) \notag \\[2mm]
	%	%%%%%%%%%%%%%%%%%%%%%%%%%%%%%%%%%%%%%%%%%%
	%	&= \frac{1}{2 \pi^2} \int_{\mathbb{R}_+} \int_{\mathbb{R}_+} \bigg[ \frac{1}{s_1s_2} \re{} \bigg\{ \big( \CFV{+}{-} - \CFU{s_1}\CFU{-s_2} \big) \exp \{ i(vs_2-us_1) \} \bigg\} \notag \\[2mm]
	%	%%%%%%%%%%%%%%%%%%%%%%%%%%%%%%%%%%%%%%%%%%
	%	&\hspace{2cm} - \frac{1}{s_1s_2} \re{} \bigg\{ \big( \CFV{+}{+} - \CFU{s_1}\CFU{s_2} \big) \exp \{- i(vs_2+us_1) \} \bigg\} \bigg] \, \dd s_1 \,  \dd s_2.  \label{eq: cov_of_indicators} 
	%	\end{align}
	%	
	%	\begin{pf}
	%		 This is a very tedious calculation and has thus been moved to the appendix for legibility.
	%	\end{pf}
	%\end{Theorem}
	%
	%Now, it is possible to integrate both sides of \eqref{eq: cov_of_indicators} with respect to $u$ and $v$ and any finite measure $\mu$. 

	\begin{Theorem}[]
		\label{thm: Integrated Covariance}
		Suppose $U$ and $V$ are identically distributed random variables with marginal characteristic function $\CFU{}$ and joint characteristic function  $\CFV{}{}$. Then, for a finite measure $\mu$ with its Fourier transform denoted by $\psi: \mathbb{R} \rightarrow \mathbb{C}, \psi(s) = \int_{\mathbb{R}} \exp \{ isx \}\, \mu(dx)$ it holds that
		\begin{align}
		&\int_{\mathbb{R}} \int_{\mathbb{R}} \Cov (\mathds{1} \{ U > u \}, \mathds{1} \{ V > v \})\, \mu(\dd u)\, \mu(\dd v) \notag \\[2mm]
		&=\frac{1}{4 \pi^2} \int_{\mathbb{R}} \int_{\mathbb{R}} \frac{1}{s_1 s_2} \Big(  \varphi_U(s_1)\varphi_U(s_2) - \CFV{+}{+} \Big)  \overline{\psi(s_1)\psi(s_2)}\  ds_1\, ds_2 \notag \\[2mm]
		&= \frac{1}{2 \pi^2} \int_{\mathbb{R}_+} \int_{\mathbb{R}_+} \bigg[ \frac{1}{s_1s_2} \re{} \bigg\{ \big( \CFV{+}{-} - \CFU{s_1}\CFU{-s_2} \big) \overline{\psi(s_1)}\psi(s_2) \bigg\} \notag \\[2mm]
		%%%%%%%%%%%%%%%%%%%%%%%%%%%%%%%%%%%%%%%%%%
		&\hspace{2cm} - \frac{1}{s_1s_2} \re{} \bigg\{ \big( \CFV{+}{+} - \CFU{s_1}\CFU{s_2} \big) \overline{\psi(s_1)\psi(s_2)} \bigg\} \bigg] \, \dd s_1 \, \dd s_2. \label{eq: integrated covariance}
		\end{align}
		
		\begin{pf}
Let $U^{\prime}$ and $V^{\prime}$ be independent copies of $U$ and $V$. Then
			\begin{align}
			&\int_{\mathbb{R}} \int_{\mathbb{R}} \Cov (\mathds{1} \{ U > u \}, \mathds{1} \{ V > v \})\, \mu(\dd u)\, \mu(\dd v) \notag \\
			&= \int_{\mathbb{R}} \int_{\mathbb{R}} \mathbb{E} \big[\mathds{1} \{ U > u, V > v \} - \mathds{1} \{ U^{\prime} > u, V^{\prime} > v \}\big]\, \mu(\dd u)\, \mu(\dd v) \notag\\
			&= \lim_{a \to \infty} \mathbb{E} \int_{\mathbb{R}} \int_{\mathbb{R}} \big[\mathds{1} \big\{ U > u > -a, V > v> -a \big\} - \mathds{1} \big\{ U^{\prime} > u > -a, V^{\prime} > v > -a \big\}\big]\, \mu(\dd u)\, \mu(\dd v). \label{eq: before_plancherel}
			\end{align}
			If we denote the difference of the two indicators by $f_a(u, v)$, then by \cite[Theorem 19.12]{Schilling_measure_theory}\footnote{We thank René Schilling for his idea which simplifies our original proof.} we get that the last equality in \eqref{eq: before_plancherel} simplifies to
			\begin{align}
			&\lim_{a \to \infty} \frac{1}{4 \pi^2} \mathbb{E} \int_{\mathbb{R}} \int_{\mathbb{R}} \widehat{f}_a(s_1, s_2) \overline{\psi(s_1)\psi(s_2)}\  ds_1\, ds_2, \label{eq: after_Schilling_thm}
			\end{align} 
			where $\widehat{f}_a(s_1, s_2) = \int_{\RR} \int_{\mathbb{R}} e^{i(s_1u+s_2v)} f_a(u,v)\ du\, dv$. By \Cref{lem: Schilling_lemma} one can interchange the expectation and the integrals in equation \eqref{eq: after_Schilling_thm} and computes
			\begin{align}
			\mathbb{E} \widehat{f}_a(s_1, s_2) = \frac{1}{s_1 s_2} \Big(  \CFU{s_1}\CFU{s_2} -\CFV{+}{+} \Big)
			\end{align}
			which is independent of $a$. Thus, equation \eqref{eq: after_Schilling_thm} simplifies to
			\begin{align*}
			\frac{1}{4 \pi^2} \int_{\mathbb{R}} \int_{\mathbb{R}} \frac{1}{s_1 s_2} \Big(  \CFU{s_1}\CFU{s_2} - \CFV{+}{+} \Big) \overline{\psi(s_1)\psi(s_2)}\  ds_1\, ds_2.
			\end{align*}
			The second identity in \eqref{eq: integrated covariance} follows from splitting the integrals into the positive and negative half-lines and substituting afterwards.
		\end{pf}
	\end{Theorem}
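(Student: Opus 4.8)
The plan is to reduce the double $\mu$-integral to a Fourier-side integral via a Parseval identity, after first rewriting the covariance as a single expectation that is amenable to Fubini. Concretely, I would start from the elementary identity \eqref{lem: Covariance with joint and marginal distr} and introduce independent copies $U', V'$ of $U, V$. Since $U, U'$ (and $V, V'$) are identically distributed and the primed variables are independent of each other, the product of marginals becomes $\PP(U'>u)\PP(V'>v) = \EE\,\mathds{1}\{U'>u, V'>v\}$, so that
\[
\Cov(\mathds{1}\{U>u\}, \mathds{1}\{V>v\}) = \EE\big[\mathds{1}\{U>u,\,V>v\} - \mathds{1}\{U'>u,\,V'>v\}\big].
\]
Integrating against $\mu(\dd u)\,\mu(\dd v)$ and pulling out the expectation leaves $\EE\iint f\,\mu(\dd u)\,\mu(\dd v)$ with $f = \mathds{1}\{U>u,V>v\} - \mathds{1}\{U'>u,V'>v\}$. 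Because $f$ need not be absolutely integrable over $\RR^2$, I would truncate from below, replacing $f$ by $f_a = \mathds{1}\{-a<u<U,\,-a<v<V\} - \mathds{1}\{-a<u<U',\,-a<v<V'\}$, which for each fixed realization lies in $\lp{1}{\RR^2}$, and recover the target in the limit $a\to\infty$.

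Next I would invoke a Parseval (multiplication) formula for the finite product measure $\mu\otimes\mu$ to write $\iint f_a\,\mu(\dd u)\,\mu(\dd v) = \tfrac{1}{4\pi^2}\iint \widehat{f_a}(s_1,s_2)\,\overline{\psi(s_1)\psi(s_2)}\,\dd s_1\,\dd s_2$, where $\widehat{f_a}(s_1,s_2) = \iint e^{i(s_1u+s_2v)}f_a\,\dd u\,\dd v$. The key computation is that $\widehat{f_a}$ factorizes over the two indicator blocks via $\int_{-a}^{U} e^{is_1 u}\,\dd u = (e^{is_1 U}-e^{-is_1 a})/(is_1)$ and the analogue in $v$, giving
\[
\widehat{f_a}(s_1,s_2) = -\frac{1}{s_1 s_2}\Big[(e^{is_1 U}-e^{-is_1 a})(e^{is_2 V}-e^{-is_2 a}) - (e^{is_1 U'}-e^{-is_1 a})(e^{is_2 V'}-e^{-is_2 a})\Big].
\]
Taking expectations and expanding, every term carrying a boundary factor $e^{-is_j a}$ cancels, precisely because $\EE e^{is_1 U} = \EE e^{is_1 U'} = \CFU{s_1}$ (and likewise in $v$); the surviving, $a$-independent term is $\EE\widehat{f_a}(s_1,s_2) = \tfrac{1}{s_1 s_2}\big(\CFU{s_1}\CFU{s_2} - \CFV{+}{+}\big)$. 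This yields the first identity in \eqref{eq: integrated covariance}, the limit $a\to\infty$ being trivial by $a$-independence.

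The hard part will be justifying the two interchanges: pulling the expectation inside the $\dd s_1\,\dd s_2$ integral in the Parseval expression, and commuting it with the $a\to\infty$ limit. Because of the $1/(s_1 s_2)$ singularity at the origin and merely conditional decay at infinity, the integral is not absolutely convergent, so I would need a dominating bound for $\EE|\widehat{f_a}|$ that is uniform in $a$ — exploiting the cancellation in the difference structure (e.g.\ $|e^{is_1 U}-e^{-is_1 a}| \le |s_1|(U+a)$ type estimates that tame the singular prefactor) — in order to legitimize Fubini and dominated convergence. I expect this to be the main technical obstacle.

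Finally, for the second identity I would split $\RR^2 = (\RR_+\cup\RR_-)^2$ into its four quadrants and substitute $s_j \mapsto -s_j$ on each negative half-line. Using the Hermitian symmetries $\CFU{-s} = \overline{\CFU{s}}$, $\CFV{-}{-} = \overline{\CFV{+}{+}}$ and $\psi(-s) = \overline{\psi(s)}$ (the last since $\mu$ is a positive measure), the quadrants pair up — $(+,+)$ with $(-,-)$ and $(+,-)$ with $(-,+)$ — so that each pair combines into twice the real part of a single integrand over $\RR_+\times\RR_+$. Tracking the signs from the substitutions then reproduces exactly the two real-part terms in \eqref{eq: integrated covariance}, with the prefactor $\tfrac{1}{4\pi^2}$ doubling to $\tfrac{1}{2\pi^2}$.
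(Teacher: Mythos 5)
Your proposal retraces the paper's own proof in every essential step: the reduction of the covariance to $\EE\big[\mathds{1}\{U>u,V>v\}-\mathds{1}\{U'>u,V'>v\}\big]$ via independent copies, the truncation at $-a$, the Parseval/multiplication formula with prefactor $1/(4\pi^2)$, the factorization $\int_{-a}^{U}e^{is_1u}\,\dd u=(e^{is_1U}-e^{-is_1a})/(is_1)$ with cancellation of all boundary terms under the expectation, and the quadrant splitting with Hermitian symmetries for the second identity. All of these computations are correct and agree with the paper.

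The genuine gap is exactly the step you yourself single out as the main obstacle, and the route you sketch for it would fail. You propose to justify interchanging $\EE$ with the $\dd s_1\,\dd s_2$ integration by exhibiting an integrable dominating bound for $\EE|\widehat{f}_a(s_1,s_2)|$, uniform in $a$. No such bound can exist: for a fixed realization the numerator of $\widehat{f}_a$ is a bounded trigonometric sum in $(s_1,s_2)$ which does not decay, so $|\widehat{f}_a|$ decays only like $1/|s_1s_2|$ at infinity, while $|\psi|$ need not decay at all --- for Dirac measures $\mu$, which are precisely the measures this theorem is later applied with, $|\psi|\equiv 1$. Hence $\int_{\RR}\int_{\RR}|\widehat{f}_a(s_1,s_2)|\,|\psi(s_1)\psi(s_2)|\,\dd s_1\,\dd s_2=\infty$ whenever $(U,V)\neq(U',V')$, and then by Tonelli $\int_{\RR}\int_{\RR}\EE|\widehat{f}_a|\,|\psi(s_1)\psi(s_2)|\,\dd s_1\,\dd s_2=\infty$ as well: the Fourier-side integral is only conditionally convergent, so Fubini cannot be legitimized by any majorant, and your auxiliary estimate $|e^{is_1U}-e^{-is_1a}|\le|s_1|(|U|+a)$ both blows up in $a$ and only helps near the origin, not at infinity. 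The paper's resolution (\Cref{lem: Schilling_lemma}) is a genuinely different device that respects this conditional convergence: truncate the $s$-domain to $(-r,r)^2$, apply the Parseval identity a second time to carry the truncated integral back to the measure side, observe that the resulting inverse Fourier transform is a product of sine-integral expressions and therefore bounded by a constant independent of the realization, of $r$ and of $a$ (boundedness of $\mathrm{Si}$), and only then exchange $\EE$ with the limit $r\to\infty$ by dominated convergence. Without this (or an equivalent) argument your proof is incomplete at its decisive step, and as sketched it cannot be completed along the line you indicate.
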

	
	\begin{Corollary}
		\label{Corr: SRD and LRD symmetric}
		Under the assumptions of \Cref{thm: Integrated Covariance}, suppose that the random vector $(U,V)$ is symmetric. Then relation \eqref{eq: integrated covariance} simplifies to
		\begin{align}
		&\int_{\mathbb{R}} \int_{\mathbb{R}} \Cov (\mathds{1} \{ U > u \}, \mathds{1} \{ V > v \})\, \mu(\dd u)\, \mu(\dd v) \notag\\[2mm]
		&= \frac{1}{2 \pi^2} \int_{\mathbb{R}_+} \int_{\mathbb{R}_+} \bigg[ \frac{1}{s_1s_2}  \big( \CFV{+}{-} - \CFU{s_1}\CFU{-s_2} \big) \re{} \big\{\overline{\psi(s_1)}\psi(s_2) \big\} \notag\\[2mm]
		%%%%%%%%%%%%%%%%%%%%%%%%%%%%%%%%%%%%%%%%%%
		&\hspace{2cm} - \frac{1}{s_1s_2}  \big( \CFV{+}{+} - \CFU{s_1}\CFU{s_2} \big) \re{} \big\{\psi(s_1)\psi(s_2) \big\} \bigg] \, \dd s_1 \, \dd s_2 \label{eq: thm_sym_case1}\\[2mm]
		&= \frac{1}{2 \pi^2} \int_{\mathbb{R}_+} \int_{\mathbb{R}_+} \frac{1}{s_1s_2}  \Big( \CFV{+}{-} - \CFV{+}{+} \Big) \re{1}\re{2} \notag \\[2mm]
		%%%%%%%%%%%%%%%%%%%%%%%%%%%%%%%%%%%%%%%%%%
		&\hspace{0.5cm} + \frac{1}{s_1s_2}  \Big(\CFV{+}{-}+ \CFV{+}{+} - 2\CFU{s_1}\CFU{s_2} \Big) \im{1} \im{2} \, \dd s_1 \, \dd s_2. \label{eq: thm_sym_case2}
		\end{align}
	\end{Corollary}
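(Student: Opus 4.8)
The plan is to obtain both displays directly from the second form of \eqref{eq: integrated covariance} in \Cref{thm: Integrated Covariance}, using the single extra hypothesis that $(U,V)$ is symmetric, i.e.\ $(U,V) \stackrel{d}{=} (-U,-V)$. The one structural observation I need is that symmetry forces every characteristic function occurring in \eqref{eq: integrated covariance} to be real-valued: indeed $\CFV{+}{+} = \EE[e^{i(s_1 U + s_2 V)}] = \EE[e^{-i(s_1 U + s_2 V)}] = \overline{\CFV{+}{+}}$, so the joint characteristic function equals its own conjugate, and the same computation for the (necessarily symmetric) marginal $U$ shows that \CFU{} is real and even, whence $\CFU{-s_2} = \CFU{s_2}$. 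After this, everything is algebra.

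Granting real-valuedness, the first identity \eqref{eq: thm_sym_case1} is immediate. In the second form of \eqref{eq: integrated covariance} the two bracketed combinations $\big(\CFV{+}{-} - \CFU{s_1}\CFU{-s_2}\big)$ and $\big(\CFV{+}{+} - \CFU{s_1}\CFU{s_2}\big)$ are now real, so each factors out of its surrounding $\re{}\{\cdot\}$, leaving $\re{}\{\overline{\psi(s_1)}\psi(s_2)\}$ and $\re{}\{\overline{\psi(s_1)\psi(s_2)}\}$ respectively. Applying the identity $\re{}\{\bar z\} = \re{}\{z\}$ to the latter replaces it by $\re{}\{\psi(s_1)\psi(s_2)\}$, which is exactly \eqref{eq: thm_sym_case1}.

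For the second identity \eqref{eq: thm_sym_case2} I would expand both real parts of the $\psi$-products. Writing $\psi(s_j) = \re{j} + i\im{j}$ gives $\re{}\{\overline{\psi(s_1)}\psi(s_2)\} = \re{1}\re{2} + \im{1}\im{2}$ and $\re{}\{\psi(s_1)\psi(s_2)\} = \re{1}\re{2} - \im{1}\im{2}$. Substituting these into \eqref{eq: thm_sym_case1} and collecting the coefficients of $\re{1}\re{2}$ and of $\im{1}\im{2}$, the evenness $\CFU{-s_2} = \CFU{s_2}$ makes the marginal-product contributions to the $\re{1}\re{2}$-coefficient cancel, leaving the factor $\big(\CFV{+}{-} - \CFV{+}{+}\big)$, while they add in the $\im{1}\im{2}$-coefficient, leaving $\big(\CFV{+}{-} + \CFV{+}{+} - 2\CFU{s_1}\CFU{s_2}\big)$. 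This is precisely \eqref{eq: thm_sym_case2}.

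I do not anticipate a genuine obstacle: the proof is a one-line observation (real-valuedness of the characteristic functions under symmetry) followed by routine bookkeeping. The only points needing care are tracking the conjugation in $\re{}\{\overline{\psi(s_1)\psi(s_2)}\} = \re{}\{\psi(s_1)\psi(s_2)\}$, getting the sign right in $\re{}\{\psi(s_1)\psi(s_2)\} = \re{1}\re{2} - \im{1}\im{2}$, and invoking the evenness $\CFU{-s_2} = \CFU{s_2}$ so that the marginal-product terms combine correctly in both coefficients.
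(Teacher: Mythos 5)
Your proposal is correct and takes essentially the same route as the paper: real-valuedness of $\CFU{}$ and $\CFV{}{}$ under symmetry gives \eqref{eq: thm_sym_case1}, and expanding the real parts of the $\psi$-products (the identity $\re{}\{xy\}=\re{}\{x\}\re{}\{y\}-\im{}\{x\}\im{}\{y\}$) gives \eqref{eq: thm_sym_case2}. Your write-up is in fact slightly more complete than the paper's, since you make explicit the evenness $\CFU{-s_2}=\CFU{s_2}$ needed to merge the marginal-product terms, a step the paper leaves implicit.
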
	
	\begin{proof}
		Equality \eqref{eq: thm_sym_case1} follows immediately from $\CFU{}$ and $\CFV{}{}$ being real-valued as characteristic functions of a symmetric random variable and random vector, respectively. 
		
		Equality \eqref{eq: thm_sym_case2} follows from	$\text{Re}\{ xy \} = \text{Re}\{ x \}\text{Re}\{ y \} - \text{Im}\{ x \}\text{Im}\{ y \}$ for any $x, y \in \mathbb{C}$.
	\end{proof}

	If the stationary real-valued stochastic process $X = \{ X(t), t \in \mathbb{R} \}$ is positively associated, we can apply \Cref{thm: Integrated Covariance} and, in the symmetric case, \Cref{Corr: SRD and LRD symmetric} to $X(0)$ and $X(t)$ to check the long range dependence of $X$. \\	
    To do so, let $T=\mathbb{R}$ in integral \eqref{eq: LRD_Spodarev}. However, the resulting expressions in \eqref{eq: integrated covariance}, \eqref{eq: thm_sym_case1} or \eqref{eq: thm_sym_case2} might prove difficult to integrate w.r.t.\ $t$ over the whole real line. Thus, it is worth noting that the following lemma allows us to restrict integration to unbounded subsets over which it might be easier to integrate.
	
	\begin{Lemma}[]
		\label{lemma: Restriction of X}
		Let $|\cdot|$ denote the Lebesgue measure on $\mathbb{R}$ and let $A \subset \RR$ be an arbitrary subset with $|A^c| < \infty$. Then, a process $X = \{ X(t), \, t \in \mathbb{R} \}$ is SRD or LRD iff $X_A = \{ X(t), \, t \in A \}$ is SRD or LRD, respectively.
		
		\begin{pf}
			We split up the integral in relation \eqref{eq: LRD_Spodarev} into $A$ and $A^c$
			\begin{align*}
			&\int_{\mathbb{R}} \int_{\mathbb{R}}  \int_{\mathbb{R}} \bigg\vert \text{Cov}\bigg(\mathds{1}\big\{ X(0) > u \big\}, \mathds{1}\big\{ X(t) > v \big\}\bigg) \bigg\vert \, \mu(\dd u) \, \mu(\dd v) \, \dd t \\[2mm]
			=&\int_{A} \int_{\mathbb{R}}  \int_{\mathbb{R}} \bigg\vert \text{Cov}\bigg(\mathds{1}\big\{ X(0) > u \big\}, \mathds{1}\big\{ X(t) > v \big\}\bigg) \bigg\vert \, \mu(\dd u) \, \mu(\dd v) \, \dd t \\[2mm]
			\vspace{1cm}&+
			\int_{A^c} \int_{\mathbb{R}}  \int_{\mathbb{R}} \underbrace{\bigg\vert \text{Cov}\bigg(\mathds{1}\big\{ X(0) > u \big\}, \mathds{1}\big\{ X(t) > v \big\}\bigg) \bigg\vert}_{\leq 1} \, \mu(\dd u) \, \mu(\dd v) \, \dd t.
			\end{align*}
			As the integral over $A^c$ is finite in any case, the integral in relation \eqref{eq: LRD_Spodarev} is finite iff $X_A$ is SRD. 
		\end{pf}
	\end{Lemma}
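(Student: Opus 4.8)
The plan is to exploit the fact that the integrand in the SRD/LRD criterion \eqref{eq: LRD_Spodarev} is uniformly bounded, so that replacing the outer $\dd t$-integration over $\mathbb{R}$ by integration over $A$ changes the triple integral only by a finite amount whenever $|A^c| < \infty$. Concretely, I would fix an arbitrary finite measure $\mu$ on $\mathbb{R}$, set $T = \mathbb{R}$ in \eqref{eq: LRD_Spodarev}, and decompose the outer integral as $\int_{\mathbb{R}} \dd t = \int_A \dd t + \int_{A^c} \dd t$. We may assume $0 \in A$ without loss of generality, since adjoining the single point $0$ to $A$ alters $A^c$ only by a Lebesgue-null set and hence preserves $|A^c| < \infty$; the $A$-part is then exactly the SRD/LRD integral for the restricted process $X_A$.

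The key estimate is the trivial bound on the covariance of indicator functions: for all $u,v \in \mathbb{R}$ and every $t$,
\[
\bigl\vert \Cov(\mathds{1}\{X(0)>u\}, \mathds{1}\{X(t)>v\}) \bigr\vert \le 1,
\]
because each indicator takes values in $[0,1]$. Integrating this bound against $\mu(\dd u)\,\mu(\dd v)\,\dd t$ over $A^c$ yields
\[
\int_{A^c}\int_{\mathbb{R}}\int_{\mathbb{R}} \bigl\vert \Cov(\mathds{1}\{X(0)>u\}, \mathds{1}\{X(t)>v\}) \bigr\vert\, \mu(\dd u)\,\mu(\dd v)\,\dd t \le \mu(\mathbb{R})^2\, |A^c| < \infty,
\]
using that $\mu$ is finite and $|A^c| < \infty$. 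Hence the $A^c$-contribution is finite regardless of the dependence structure of $X$, so for this fixed $\mu$ the full integral over $\mathbb{R}$ is finite if and only if the integral over $A$ is finite.

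The final step is to propagate this per-$\mu$ equivalence through the quantifiers in \Cref{def: LRD_Spodarev}: $X$ is SRD iff the full integral is finite for \emph{every} finite $\mu$, which by the equivalence above holds iff the $A$-integral is finite for every $\mu$, i.e.\ iff $X_A$ is SRD; negating, $X$ is LRD (there \emph{exists} a finite $\mu$ making the integral infinite) iff $X_A$ is LRD. This establishes both equivalences simultaneously.

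I do not anticipate a genuine obstacle, as the argument is essentially finiteness bookkeeping built on the boundedness of indicator covariances. The only points requiring care are: (i) that the bounding constant $\mu(\mathbb{R})^2 |A^c|$ depends on $\mu$ yet is finite for each admissible $\mu$, which is precisely what the quantifier manipulation needs; and (ii) the placement of the reference point $0$, handled by the null-set adjustment above (and, in any case, immaterial because stationarity makes $\Cov(\mathds{1}\{X(0)>u\}, \mathds{1}\{X(t)>v\})$ depend only on the lag $t$).
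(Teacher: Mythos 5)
Your proposal is correct and takes essentially the same approach as the paper's proof: split the outer $t$-integral into $A$ and $A^c$, use the trivial bound $\vert \Cov(\mathds{1}\{X(0)>u\},\mathds{1}\{X(t)>v\})\vert \leq 1$ so that the $A^c$-contribution is bounded by $\mu(\mathbb{R})^2\,\vert A^c\vert < \infty$, and conclude the per-$\mu$ equivalence of finiteness. Your additional bookkeeping (the explicit constant, the quantifier over $\mu$, and the placement of the point $0$) merely makes explicit what the paper leaves implicit.
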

	
	Now we give the main result of this section showing the use of characteristic functions to check the short or long range dependence of $X$. 
	
	\begin{Theorem}[]
		\label{thm: SRD and LRD}
		Suppose we have a stationary real-valued, positively associated stochastic process $X = \{ X(t),\, t \in \mathbb{R} \}$ with absolutely continuous marginal distributions. Denote the univariate characteristic function of $X(0)$ by $\CFA{}$ and the bivariate characteristic function of $(X(0), X(t))$ by $\CFB{}{}$. Furthermore, let $A \subset \RR$ be an arbitrary subset with $|A^c| < \infty$.
		\begin{enumerate}[(a)]
			\item  Then, $X$ is short range dependent if
			\begin{align}
			\int_{A} &\int_{\mathbb{R}} \int_{\mathbb{R}} \Cov (\mathds{1} \{ X(0) > u \}, \mathds{1} \{ X(t) > v \}) \, \mu(\dd u) \, \mu(\dd v)\, \dd t \notag \\[2mm]
			&= \frac{1}{2 \pi^2} \int_{A} \int_{\mathbb{R}_+} \int_{\mathbb{R}_+} \bigg[ \frac{1}{s_1s_2} \re{} \bigg\{ \big( \CFB{+}{-} - \CFA{s_1}\CFA{-s_2} \big) \overline{\psi(s_1)}\psi(s_2) \bigg\} \notag \\[2mm]
			%%%%%%%%%%%%%%%%%%%%%%%%%%%%%%%%%%%%%%%%%%
			&\hspace{1cm} - \frac{1}{s_1s_2} \re{} \bigg\{ \big( \CFB{+}{+} - \CFA{s_1}\CFA{s_2} \big) \overline{\psi(s_1)\psi(s_2)} \bigg\} \bigg] \, \dd s_1 \, \dd s_2\, \dd t < \infty \label{eq: SRD_or_LRD}
			\end{align}
			for any finite measure $\mu$ with Fourier transform  $\psi(s) = \int_{\mathbb{R}} \exp \{ isx \}\, \mu(dx)$.
			
			\item Additionally, if $(X(0), X(t))$ is symmetric for all $t \in \mathbb{R}$, then condition \eqref{eq: SRD_or_LRD} rewrites as
			\begin{align}
			&\int_{A}\int_{\mathbb{R}} \int_{\mathbb{R}} \Cov (\mathds{1} \{ X(0) > u \}, \mathds{1} \{ X(t) > v \})\, \mu(\dd u)\, \mu(\dd v)\, \dd t \notag \\[2mm]
			&= \frac{1}{2 \pi^2} \int_{A} \int_{\mathbb{R}_+} \int_{\mathbb{R}_+}  \bigg[ \frac{\CFB{+}{-} - \CFB{+}{+}}{s_1s_2}  \re{1}\re{2}\notag  \\[2mm]
			%%%%%%%%%%%%%%%%%%%%%%%%%%%%%%%%%%%%%%%%%%
			& -   \frac{\CFB{+}{-} + \CFB{+}{+} - 2\CFA{s_1}\CFA{s_2}}{s_1s_2}  \im{1} \im{2} \bigg] \, \dd s_1 \, \dd s_2\, \dd t <\infty \label{eq: SRD_or_LRD2}.
			\end{align} 
		\end{enumerate}
		Otherwise, i.e.\ if there exists a finite measure $\mu$ such that the integral in \eqref{eq: SRD_or_LRD} is infinite, $X$ is long range dependent.
		
		\begin{pf}
			\begin{enumerate}[(a)]
				\item  Take $U = X(0)$ and $V=X(t),$ where $t \in \mathbb{R}$ in \Cref{thm: Integrated Covariance}. 
				Then, $U$ and $V$ are absolutely continuous and identically distributed random variables. Therefore, the equality in \eqref{eq: SRD_or_LRD} is established by \Cref{thm: Integrated Covariance}. It follows by relation \eqref{eq: SRD_or_LRD} and \Cref{lemma: Restriction of X} that $X$ is SRD. Similarly, $X$ is LRD if \eqref{eq: SRD_or_LRD} is infinite.  
				\item Follows analogously by using \Cref{Corr: SRD and LRD symmetric}.
			\end{enumerate}
		\end{pf}
	\end{Theorem}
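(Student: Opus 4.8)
The plan is to reduce everything to the machinery already assembled: the pointwise identity of \Cref{thm: Integrated Covariance} (and its symmetric refinement \Cref{Corr: SRD and LRD symmetric}), the positive-association observation that removes the absolute value in \Cref{def: LRD_Spodarev}, and the restriction principle of \Cref{lemma: Restriction of X}. No genuinely new estimate is required; the work is in stitching these together and checking that the hypotheses line up. I would treat parts (a) and (b) in parallel, since they differ only in which of the two earlier results supplies the inner identity.

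First I would fix $t \in \mathbb{R}$ and set $U = X(0)$, $V = X(t)$. By stationarity these are identically distributed, so their common characteristic function is $\CFA{}$ while the bivariate characteristic function of $(U,V)$ is $\CFB{}{}$; together with the standing absolute continuity of the marginals, this is exactly the setting of \Cref{thm: Integrated Covariance}. Invoking that theorem gives, for each fixed $t$, the equality between $\int_{\mathbb{R}}\int_{\mathbb{R}} \Cov(\mathds{1}\{X(0)>u\}, \mathds{1}\{X(t)>v\})\,\mu(\dd u)\,\mu(\dd v)$ and the inner double integral over $\mathbb{R}_+ \times \mathbb{R}_+$ appearing on the right of \eqref{eq: SRD_or_LRD}. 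Integrating this identity in $t$ over $A$ then yields the displayed equality of the theorem.

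Next I would connect the finiteness of that quantity back to \Cref{def: LRD_Spodarev}. Because $X$ is positively associated, the covariance of the two indicators is nonnegative (the observation recorded just after \Cref{def: association}), so the integrand is nonnegative; Tonelli therefore legitimizes the iterated integration and shows that finiteness is insensitive to the order of the $t$-, $\mu$-, and $(s_1,s_2)$-integrals. Since positive association also makes the absolute value in \eqref{eq: LRD_Spodarev} redundant, finiteness of the right-hand side of \eqref{eq: SRD_or_LRD} over $A$ is equivalent to $\int_A \int_{\mathbb{R}}\int_{\mathbb{R}} \bigl|\Cov(\mathds{1}\{X(0)>u\}, \mathds{1}\{X(t)>v\})\bigr|\,\mu(\dd u)\,\mu(\dd v)\,\dd t < \infty$. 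Finally, \Cref{lemma: Restriction of X}, applicable since $|A^c| < \infty$, upgrades the statement over $A$ to the corresponding statement over all of $\mathbb{R}$, i.e.\ to SRD in the sense of \Cref{def: LRD_Spodarev}; the contrapositive gives LRD whenever the integral diverges for some finite $\mu$. Part (b) runs identically, with \Cref{Corr: SRD and LRD symmetric} replacing \Cref{thm: Integrated Covariance} under the added symmetry of $(X(0), X(t))$.

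The only real obstacle I anticipate is justifying that the pointwise identity may be integrated in $t$ and that the $t$-integral commutes with the $(s_1,s_2)$- and $\mu$-integrals. This is precisely where positive association does the heavy lifting: by pinning the sign of the inner integral it converts a potentially delicate Fubini argument into a clean Tonelli application. I would also flag explicitly that \eqref{eq: SRD_or_LRD} concerns the \emph{signed} integral whereas \Cref{def: LRD_Spodarev} carries an absolute value, so the positive-association step must be stated rather than taken for granted.
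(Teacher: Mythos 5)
Your proposal is correct and follows essentially the same route as the paper's own proof: apply \Cref{thm: Integrated Covariance} (resp.\ \Cref{Corr: SRD and LRD symmetric}) with $U=X(0)$, $V=X(t)$, integrate over $A$, use positive association to drop the absolute value in \Cref{def: LRD_Spodarev}, and conclude via \Cref{lemma: Restriction of X}. Your explicit flagging of the Tonelli step and of the signed-versus-absolute-value issue is a more careful write-up of what the paper leaves implicit, not a different argument.
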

	
	%%%%%%%%%%%%%%%%%%%% ALPHA STABLE  %%%%%%%%%%%%%%%%%%%%%%%%%%%%%%%%%%%%%%%%%%%%%%%%%%%%%%
	%%%%%%%%%%%%%%%%%%%%%%%%%%%%%%%%%%%%%%%%%%%%%%%%%%%%%%%%%%%%%%%%%%%%%%%%%%%%%%%%%%%%%%%%%
	%%%%%%%%%%%%%%%%%%%%%%%%%%%%%%%%%%%%%%%%%%%%%%%%%%%%%%%%%%%%%%%%%%%%%%%%%%%%%%%%%%%%%%%%%
	
	\section{Long Range Dependence of $\alpha$--stable Moving Averages}
	\label{sec: AlphaStable}
	In this section, we investigate the LRD of S$\alpha$S moving averages in continuous and discrete time.
	%\subsection{Symmetric Alpha-stable Moving Averages}
	%\label{sec: SAS MA}
	
	By \Cref{def: SaS MA}, a symmetric $\alpha$-stable moving average with kernel function $m \in \lp{\alpha}{\mathbb{R}}$, $\alpha < 2$, is defined by $X(t) = \int_{\RR} m(t-x) \ \Lambda(\dd x)$, $t \in \mathbb{R}$, where $\Lambda$ is a symmetric $\alpha$-stable random measure.
	\begin{Remark}[]
		\begin{enumerate}[(a)]
			\label{rem: ID moving averages}
			
			\item 	Note that the S$\alpha$S moving average process $X = \{ X(t), \, t \in \mathbb{R} \}$ is stationary, $X(0)$ is absolutely continuous and, by Property 3.2.1 from \cite{SamorodTaqqu94}, the random vector $(X(0),X(t))$ is symmetric for every $t \in \mathbb{R}$.
			
			\item By \cite{Bulinski_Shashkin}, Theorems 1.3.5 and 1.3.27, $X$ is positively associated if the kernel function $m$ is nonnegative. 
			
			\item To exclude the trivial case $X(t) = 0$ for all $t \in \RR$ we always assume that the Lebesgue measure of the set $\{ x\in \mathbb{R} \, \vert \; m(x) > 0 \}$ is positive.	
		\end{enumerate}
	\end{Remark}
	
	By \cite{SamorodTaqqu94}, Proposition 3.4.2., the characteristic function $\varphi:\mathbb{R}\to \mathbb{C}$ of $X(t)$, $t \in \mathbb{R}$, is given by 
	\begin{align}
	\CFA{s}=\exp \bigg\{ - \vert s \vert ^ \alpha \int_{\mathbb{R}} \vert  m(x) \vert ^ \alpha \, \dd x \bigg\}, \quad  s\in \mathbb{R}. \label{eq: char_fct_univ}
	\end{align} 
	Moreover, the bivariate characteristic function $\varphi_t:\mathbb{R}\times\mathbb{R} \to \mathbb{C}$ of $\big( X(0),X(t) \big),$ $t\in \mathbb{R}$ is given by
	\begin{align}
 	\CFB{+}{+} =\exp \bigg\{ - \int_{\mathbb{R}} \vert s_1m(-x)+s_2m(t-x)\vert^{\alpha} \, \dd x \bigg\}, \quad s_1,s_2\in \mathbb{R}. \label{eq: char_fct_biv}
	\end{align}

	Before we get to our main result, we need to introduce the \textit{$\alpha$-spectral covariance} of a stable vector as defined by \cite[equation (11)]{Damarackas2017}. Let $\mathbb{S}^1=\{\mathbf{x}\in \mathbb{R}^2: \|\mathbf{x}\|=1\}$ be the unit circle.  Recall that a random vector $Z = (X_1, X_2)$ is symmetric $\alpha$-stable with parameter $\alpha$ if there exists a finite measure $\Gamma$ on $\mathbb{S}^1$, the so-called \textit{spectral measure}, such that the characteristic function of $Z$ is given by
	\begin{align*}
	\mathbb{E}e^{i\langle s, Z \rangle} = \exp \bigg\{ -\int_{\mathbb{S}^1} \vert \langle s,x \rangle \vert ^{\alpha} \Gamma(dx) \bigg\}, s \in \mathbb{R}^2, 
	\end{align*}
	where $\langle \cdot\, , \cdot \rangle$ is the standard inner product on $\mathbb{R}^2$.
	
	\begin{Definition}[]
		Suppose $(X_1,X_2)$ is an $\alpha$-stable random vector with spectral measure $\Gamma$, then the { \it $\alpha$-spectral covariance} of $X_1$ and $X_2$ is given by
		\begin{align}
		\rho = \int_{\mathbb{S}^1} |s_1 s_2|^{\alpha/2} \sgn(s_1 s_2)\ \Gamma(d(s_1,s_2)). \label{eq: alpha-convolution}
		\end{align}
	\end{Definition}
	
	Let us calculate the $\alpha$-spectral covariance of $(X(0), X(t))$, $t \in \mathbb{R}$, where $X$ is a S$\alpha$S moving average.
	
	\begin{Lemma}[]
		\label{thm: alpha_convolution_SASMA_ASLP}
		Suppose $X = \{ X(t) , t \in \mathbb{R}\}$ with $X(t) = \int_{\RR} m(t-x)\ \Lambda(dx)$ is a S$\alpha$S moving average process. Then, the $\alpha$-spectral covariance of $(X(0), X(t))$, $t \in \mathbb{R}$, is given by 
		\begin{align}
		\rho_t = \int_{\mathbb{R}} m^{\alpha/2}(-x)m^{\alpha/2}(t-x)\sgn(m(-x)m(t-x))\, \dd x. \label{eq: rho_t}
		\end{align}
		
		\begin{pf}
			Denote $m_1(x) = m(-x)$ and $m_2(x)=m(t-x)$, Proposition 3.4.3 in \cite{SamorodTaqqu94} and the the symmetry of $\Lambda$ yields that $(X(0), X(t))$ is S$\alpha$S with spectral measure $\Gamma$ defined for all Borel sets $A \subset \mathbb{S}^1$ by 
			\begin{align*}
			\Gamma(A) &= \frac{1}{2} \int_{g^{-1}(A)} \Big( m_1^2(x) + m_2^2(x) \Big)^{\alpha/2}\, \dd x  
			+ \frac{1}{2} \int_{g^{-1}(-A)} \Big( m_1^2(x) + m_2^2(x) \Big)^{\alpha/2}\, \dd x\\[2mm]
			&=\gamma(g^{-1}(A)) + \gamma(g^{-1}(-A)) =:(\gamma \circ g^{-1})(A) + (\gamma \circ g^{-1})(-A),
			\end{align*} 
			where
			\begin{align*}
			g(x) = \left(
			\frac{m_1(x)}{\big( m_1^2(x)+m_2^2(x) \big)^{1/2}}\ ,  \ \frac{m_2(x)}{\big( m_1^2(x)+m_2^2(x) \big)^{1/2}}
			\right),\quad x\in \mathbb{R}.
			\end{align*}
			Hereby $\gamma$ is an absolutely continuous measure w.r.t.~the Lebesgue measure with density $\frac{1}{2}( m_1^2(x)+m_2^2(x))^{\alpha/2}$. With $f(s_1, s_2) =  |s_1 s_2|^{\alpha/2} \sgn(s_1 s_2)$ we get
			\begin{align*}
			\int_{\mathbb{S}^1} f\ d(\gamma \circ g^{-1}) &= \int_{g^{-1}(\mathbb{S}^1)} f \circ g\ d\gamma = \int_{\RR} \frac{m_1^{\alpha/2}(x)\, m_2^{\alpha/2}(x)\sgn(m_1(x)m_2(x))}{\big( m_1^2(x) + m_2^2(x) \big)^{ \alpha/2}}\, \gamma(dx) \\[2mm]
			&=\frac{1}{2} \int_{\RR}m_1^{\alpha/2}(x)\, m_2^{\alpha/2}(x)\sgn(m_1(x)m_2(x))\, \dd x.
			\end{align*}
			Thus,
			\begin{align*}
			\rho_t =  \int_{\mathbb{S}^1}  |s_1 s_2|^{\alpha/2} \sgn(s_1 s_2)\ \Gamma(d(s_1, s_2)) =  \int_{\RR} m_1^{\alpha/2}(x)\, m_2^{\alpha/2}(x)\sgn(m_1(x)m_2(x))\, \dd x.
			\end{align*}
		\end{pf}
	\end{Lemma}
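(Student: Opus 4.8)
The plan is to first identify the spectral measure of the stable vector $(X(0),X(t))$ and then evaluate the integral \eqref{eq: alpha-convolution} defining the $\alpha$-spectral covariance against it. Writing $m_1(x)=m(-x)$ and $m_2(x)=m(t-x)$, I would note that both $m_1,m_2\in\lp{\alpha}{\mathbb{R}}$ and that $(X(0),X(t))=\big(\int_{\mathbb{R}} m_1\,\dd\Lambda,\ \int_{\mathbb{R}} m_2\,\dd\Lambda\big)$ is a jointly S$\alpha$S vector. The structural representation of such integrals (Proposition 3.4.3 in \cite{SamorodTaqqu94}), together with the symmetry of $\Lambda$, gives the spectral measure $\Gamma$ on $\mathbb{S}^1$ as a symmetrized pushforward: with $g(x)=\big(m_1(x),m_2(x)\big)/\|(m_1(x),m_2(x))\|$ the radial normalization map and $\gamma$ the measure with Lebesgue density $\tfrac12\big(m_1^2(x)+m_2^2(x)\big)^{\alpha/2}$, one has $\Gamma(A)=(\gamma\circ g^{-1})(A)+(\gamma\circ g^{-1})(-A)$ for Borel $A\subset\mathbb{S}^1$.

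The second step exploits the symmetry of the integrand. Since $f(s_1,s_2)=|s_1 s_2|^{\alpha/2}\sgn(s_1 s_2)$ satisfies $f(-s_1,-s_2)=f(s_1,s_2)$, the two antipodal pieces of $\Gamma$ contribute equally, so that $\rho_t=\int_{\mathbb{S}^1} f\,\dd\Gamma=2\int_{\mathbb{S}^1} f\,\dd(\gamma\circ g^{-1})$. Applying the change-of-variables formula for pushforward measures then turns this into an integral over the line, $\rho_t=2\int_{\mathbb{R}}(f\circ g)\,\dd\gamma$.

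The final step is a direct computation of $f\circ g$. Substituting $g$, the two normalizing factors $\|(m_1,m_2)\|$ combine to produce $\big(m_1^2+m_2^2\big)^{\alpha/2}$ in the denominator of the magnitude, while the sign collapses to $\sgn(m_1 m_2)$ because the denominator is positive; thus $f(g(x))=|m_1(x)|^{\alpha/2}|m_2(x)|^{\alpha/2}\sgn(m_1(x)m_2(x))\big/\big(m_1^2(x)+m_2^2(x)\big)^{\alpha/2}$. Multiplying by the density $\tfrac12\big(m_1^2+m_2^2\big)^{\alpha/2}$ of $\gamma$ cancels this denominator exactly, and the factor $2$ cancels the $\tfrac12$, leaving $\rho_t=\int_{\mathbb{R}}|m_1(x)|^{\alpha/2}|m_2(x)|^{\alpha/2}\sgn(m_1(x)m_2(x))\,\dd x$, which is the claim after resubstituting $m_1,m_2$ (here, as throughout, $m^{\alpha/2}$ abbreviates $|m|^{\alpha/2}$).

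I expect the only genuinely delicate point to be the first step: correctly reading off the form of $\Gamma$ from the integral representation, in particular the symmetrization and the $\tfrac12$ normalization of its density, and justifying the pushforward change of variables (measurability of $g$ and its behaviour on the null set where $m_1=m_2=0$). Once $\Gamma$ is pinned down, the remaining cancellation of the $\big(m_1^2+m_2^2\big)^{\alpha/2}$ factors is purely mechanical.
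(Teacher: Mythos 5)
Your proposal is correct and follows essentially the same route as the paper's proof: identifying the spectral measure $\Gamma$ as the symmetrized pushforward $(\gamma \circ g^{-1})(A) + (\gamma \circ g^{-1})(-A)$ via Proposition 3.4.3 of \cite{SamorodTaqqu94}, then evaluating $\int_{\mathbb{S}^1} f \, \dd\Gamma$ by the change-of-variables formula so that the density $\tfrac12\big(m_1^2+m_2^2\big)^{\alpha/2}$ cancels the denominator of $f\circ g$. Your explicit use of the evenness $f(-s_1,-s_2)=f(s_1,s_2)$ to collapse the two antipodal pieces into a factor $2$ is exactly the step the paper leaves implicit in its final line.
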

	
	Now, a sufficient condition for the short range dependence of $X$ can be formulated in terms of $\rho_t$ or, equivalently, in terms of the kernel function $m$.
	
	\begin{Theorem}[]
		\label{thm: SRD_rho}
		Let $X= \{ X(t), t \in \mathbb{R} \}$ be a S$\alpha$S moving average process with parameter $\alpha \in (0, 2)$, nonnegative kernel function $m$ and $\alpha$-spectral covariance $\rho_t$ given in \eqref{eq: rho_t}. $X$ is SRD if
		\begin{align}
		\rho_t  \in \lp{1}{\mathbb{R}}, \label{eq: sufficient conditions rho} 
		\end{align}
		or, equivalently, $m \in \lp{\alpha/2}{\mathbb{R}}$.

		\begin{pf}
		Without loss of generality, assume $\mu$ is a probability measure. Now, apply \Cref{thm: SRD and LRD} to $X$ for some $\varepsilon \in (0,\ANA{m})$ and choose $A = \{ t \in \mathbb{R} \ \vert \ \rho_t \in [0, \varepsilon) \}.$ 	It follows from the integrability of $\rho_t$ that $\rho_t \rightarrow 0$ as $t \rightarrow \pm \infty$. Thus, there exists a constant $\tilde{t} >0$ such that $\rho_t < \varepsilon$ for all $t \in \mathbb{R}$ where $\vert t \vert > \tilde{t}$. Hence, it holds that $A^c \subset \big\{ t \in \mathbb{R}\ \big\vert\ \vert t \vert \leq \tilde{t} \big\}$ and $|A^c| < \infty.$
			
			Obviously, the right-hand side of the equality in \eqref{eq: SRD_or_LRD2} is bounded by 
			\begin{align}
			\nonumber&\frac{1}{2 \pi^2} \int_{A} \int_{\mathbb{R}_+} \int_{\mathbb{R}_+}   \frac{|\CFB{+}{-} - \CFB{+}{+}|}{s_1s_2}  \underbrace{\big\vert \re{1} \re{2} \big\vert}_{\leq 1} \dd s_1 \, \dd s_2 \dd t\\[2mm]
			%%%%%%%%%%%%%%%%%%%%%%%%%%%%%%%%%%%%%%%%%%
			\nonumber + &\frac{1}{2 \pi^2} \int_{A} \int_{\mathbb{R}_+} \int_{\mathbb{R}_+}  \frac{|\CFB{+}{-} + \CFB{+}{+} - 2\CFA{s_1}\CFA{s_2}|}{s_1s_2}  \underbrace{\big\vert \im{1} \im{2} \big\vert}_{\leq 1}  \, \dd s_1 \, \dd s_2\, \dd t\\
			\nonumber \leq &\frac{1}{ \pi^2} \int_{A} \int_{\mathbb{R}_+} \int_{\mathbb{R}_+}   \frac{\vert\CFB{+}{-} - \CFA{s_1}\CFA{s_2}\vert}{s_1s_2}   \dd s_1 \, \dd s_2 \dd t\\[2mm]
			%%%%%%%%%%%%%%%%%%%%%%%%%%%%%%%%%%%%%%%%%%
			+ &\frac{1}{ \pi^2} \int_{A} \int_{\mathbb{R}_+} \int_{\mathbb{R}_+}  \frac{\vert\CFB{+}{+} - \CFA{s_1}\CFA{s_2}\vert}{s_1s_2}    \, \dd s_1 \, \dd s_2\, \dd t=:\frac{1}{ \pi^2}\left( I_1+I_2\right).
			\end{align}
			
			%Then, we show in Lemma \ref{lemma: ineq} that
			%\begin{equation*} I_2 \leq  2 \bigg(\frac{2}{\alpha}+{\Vert \CFA{} \Vert}_1\bigg)^2 \int_{A} \rho_t \dd t \label{eq: final estimation I_2} <\infty,
			%\end{equation*}
			%where the finiteness of $I_2$ clearly follows from condition \eqref{eq: sufficient conditions rho}. 
			
		    By inequalities \eqref{eq: I_1 finite} and \eqref{eq: I_2 finite} in Lemma \ref{lemma: ineq}  we get
			\begin{equation*}
			I_1,I_2 \leq \frac{8\pi}{\alpha^2 \Vert m \Vert_{\alpha}^{2\alpha}} \int_{A} \frac{\rho_t}{\sqrt{\Vert m \Vert_{\alpha}^{2\alpha}-\rho_t^2}} \, \dd t \leq \frac{8\pi}{\alpha^2 \Vert m \Vert _{\alpha}^{2\alpha}} \frac{1}{\sqrt{\Vert m \Vert_{\alpha}^{2\alpha}-\varepsilon^2}} \int_{A} \rho_t \, \dd t < \infty. 
			\end{equation*}
 	        Next, show that  condition \eqref{eq: sufficient conditions rho} holds true iff $m\in\lp{\alpha/2}{\mathbb{R}}$. By Fubini's theorem  we get
			\begin{align*}
			\int_{\mathbb{R}} \rho_t\, \dd t &=  \int_{\mathbb{R}} \int_{\mathbb{R}} m^{\alpha/2}(-x)m^{\alpha/2}(t-x)\, \dd x \, \dd t\\[2mm]
			&= \int_{\mathbb{R}}  m^{\alpha/2}(-x) \bigg( \int_{\mathbb{R}} m^{\alpha/2}(t-x) \, \dd t \bigg) \, \dd x\\[2mm]
			&= \bigg( \int_{\mathbb{R}}  m^{\alpha/2}(-x) \, \dd x \bigg)^2 = \Vert m \Vert _{\alpha/2}^{\alpha}<\infty	.
			\end{align*} 

\end{pf} 
\end{Theorem}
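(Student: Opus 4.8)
The plan is to evaluate the integrated covariance of excursion indicators through the characteristic-function identity already available and then bound it crudely, using that the whole dependence between $X(0)$ and $X(t)$ is carried by $\rho_t$. First I would verify that $X$ satisfies the hypotheses of \Cref{thm: SRD and LRD}(b): by \Cref{rem: ID moving averages} the process is stationary, has absolutely continuous marginals, is positively associated because $m\ge0$, and $(X(0),X(t))$ is symmetric for every $t$. Hence the triple integral in \eqref{eq: LRD_Spodarev} equals the right-hand side of \eqref{eq: SRD_or_LRD2}. Since multiplying $\mu$ by a constant does not affect finiteness, I would assume $\mu$ is a probability measure, so that $|\psi(s)|\le1$ and therefore $|\re{1}\re{2}|\le1$ and $|\im{1}\im{2}|\le1$.

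With these trivial bounds on the $\psi$-factors, a triangle inequality reduces the absolute value of the integrand of \eqref{eq: SRD_or_LRD2} to a constant multiple of $\frac{1}{s_1s_2}\big(|\CFB{+}{-}-\CFA{s_1}\CFA{s_2}|+|\CFB{+}{+}-\CFA{s_1}\CFA{s_2}|\big)$. So it suffices to bound, uniformly in $t$, the inner double integral of each term $\frac{|\CFB{+}{-}-\CFA{s_1}\CFA{s_2}|}{s_1s_2}$ and $\frac{|\CFB{+}{+}-\CFA{s_1}\CFA{s_2}|}{s_1s_2}$ over $\mathbb{R}_+\times\mathbb{R}_+$, and then to integrate the resulting $t$-dependent bound.

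The inner $(s_1,s_2)$-estimate is the decisive step and, I expect, the main obstacle. From \eqref{eq: char_fct_univ} and \eqref{eq: char_fct_biv} the two characteristic functions differ only through the exponent $D_\pm(s_1,s_2)=\int_{\mathbb{R}}\big(|s_1m(-x)\pm s_2m(t-x)|^\alpha-|s_1m(-x)|^\alpha-|s_2m(t-x)|^\alpha\big)\,\dd x$, and the natural route is the pointwise inequality $\big||a\pm b|^\alpha-|a|^\alpha-|b|^\alpha\big|\le c_\alpha|ab|^{\alpha/2}$ for $a,b\ge0$, which integrates to $|D_\pm(s_1,s_2)|\le c_\alpha|s_1s_2|^{\alpha/2}\rho_t$ by \eqref{eq: rho_t}. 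Combined with $|e^{-a}-e^{-b}|\le|a-b|$, this tames the $1/(s_1s_2)$ singularity near the origin, where $|s_1s_2|^{\alpha/2-1}$ is integrable, while the exponential factors of \eqref{eq: char_fct_biv} furnish the decay needed on the tails; carrying out both regimes should yield a bound of the form $C\rho_t/\sqrt{\Vert m\Vert_\alpha^{2\alpha}-\rho_t^2}$ for each double integral. To keep the denominator bounded away from zero I would not integrate over all of $\mathbb{R}$: invoking \Cref{lemma: Restriction of X}, I restrict to $A=\{t\in\mathbb{R}:\rho_t<\varepsilon\}$ for a fixed $\varepsilon\in(0,\ANA{m})$. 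Since $\rho_t$ is the autocorrelation of $m^{\alpha/2}$, which lies in $\lp{2}{\mathbb{R}}$ because $m\in\lp{\alpha}{\mathbb{R}}$, it vanishes as $|t|\to\infty$; hence $|A^c|<\infty$ and on $A$ the denominator is at least $\sqrt{\Vert m\Vert_\alpha^{2\alpha}-\varepsilon^2}>0$. Each double integral is then dominated by $C'\int_A\rho_t\,\dd t\le C'\int_{\mathbb{R}}\rho_t\,\dd t$.

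It remains to identify the hypothesis with $m\in\lp{\alpha/2}{\mathbb{R}}$. Because $m\ge0$ the sign in \eqref{eq: rho_t} is trivial, so by Fubini $\int_{\mathbb{R}}\rho_t\,\dd t=\int_{\mathbb{R}}m^{\alpha/2}(-x)\big(\int_{\mathbb{R}}m^{\alpha/2}(t-x)\,\dd t\big)\,\dd x=\big(\int_{\mathbb{R}}m^{\alpha/2}(x)\,\dd x\big)^2=\Vert m\Vert_{\alpha/2}^\alpha$. Thus $\rho_t\in\lp{1}{\mathbb{R}}$ is precisely $m\in\lp{\alpha/2}{\mathbb{R}}$, and under this assumption the bound above is finite, so the integral in \eqref{eq: LRD_Spodarev} is finite for every finite $\mu$ and $X$ is SRD by \Cref{def: LRD_Spodarev}.
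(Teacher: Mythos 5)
Your proposal is correct and follows essentially the same route as the paper's proof: reduction via \Cref{thm: SRD and LRD}(b) with a probability measure, trivial bounds on the $\psi$-factors, a triangle-inequality split into the two terms controlled by the pointwise inequality $\big||a\pm b|^\alpha-|a|^\alpha-|b|^\alpha\big|\le 2|ab|^{\alpha/2}$ together with $|e^{-x}-1|\le x$ (exactly the content of the paper's \Cref{lemma: ineq}, including the anticipated bound $C\rho_t/\sqrt{\Vert m\Vert_\alpha^{2\alpha}-\rho_t^2}$), restriction to $A=\{t:\rho_t<\varepsilon\}$ via \Cref{lemma: Restriction of X} to keep the denominator bounded away from zero, and the Fubini identification $\int_{\mathbb{R}}\rho_t\,\dd t=\Vert m\Vert_{\alpha/2}^{\alpha}$. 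Your justification that $\rho_t\to0$ as $|t|\to\infty$, being the autocorrelation of the $\lp{2}{\mathbb{R}}$-function $m^{\alpha/2}$, is in fact slightly more rigorous than the paper's direct appeal to the integrability of $\rho_t$.
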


	Naturally, one may also ask for sufficient conditions for the long range dependence of $X$. Such a condition is given by 
	\begin{Theorem}[]
		\label{thm: LRD_min_m}
		Let $X= \{ X(t), t \in \mathbb{R} \}$ be a S$\alpha$S moving average process  with parameter $\alpha \in (0, 2)$ and nonnegative kernel function $m$. Then, $X$ is long range dependent if
		\begin{align}
		\int_{\mathbb{R}} \int_{\mathbb{R}}  (m^\alpha(x) \wedge m^\alpha(t)) \, \dd x \, \dd t = \infty. \label{eq: LRD_min_m}
		\end{align}
		\begin{pf}
			Given in the appendix.
		\end{pf}
	\end{Theorem}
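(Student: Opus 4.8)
The plan is to verify long range dependence straight from \Cref{def: LRD_Spodarev} by producing a single finite measure $\mu$ for which the triple integral is infinite. Since $X$ is positively associated and $(X(0),X(t))$ is symmetric for every $t$ (\Cref{rem: ID moving averages}), the absolute values may be dropped and \Cref{thm: SRD and LRD}(b) applies. I would take $\mu=\delta_0$, the unit point mass at the origin, whose Fourier transform is $\psi\equiv 1$; then $\re{1}=\re{2}=1$ and $\im{1}=\im{2}=0$, so the second summand of \eqref{eq: SRD_or_LRD2} vanishes and (up to the positive constant $1/(2\pi^2)$) everything reduces to showing
\[
\int_{\mathbb{R}}\int_{\mathbb{R}_+}\int_{\mathbb{R}_+}\frac{\CFB{+}{-}-\CFB{+}{+}}{s_1s_2}\,\dd s_1\,\dd s_2\,\dd t=\infty .
\]
By \eqref{eq: char_fct_univ}--\eqref{eq: char_fct_biv} and $m\ge 0$, the integrand equals $\big(e^{-I_-}-e^{-I_+}\big)/(s_1s_2)$ with $I_\pm=I_\pm(t,s_1,s_2)=\int_{\mathbb{R}}\vert s_1m(-x)\pm s_2m(t-x)\vert^{\alpha}\,\dd x$ and $I_+\ge I_-\ge 0$, so it is nonnegative, consistent with positive association.

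Next I would produce an elementary lower bound for the inner integrand. From $e^{y}-1\ge y$ one gets $e^{-I_-}-e^{-I_+}\ge (I_+-I_-)\,e^{-I_+}$. Restricting the $(s_1,s_2)$-integration to the box $(0,S]^2$ (legitimate since the integrand is nonnegative) and using $(p+q)^\alpha\le c_\alpha(p^\alpha+q^\alpha)$ with $c_\alpha=\max(1,2^{\alpha-1})$ to bound $I_+\le 2c_\alpha S^\alpha\ANA{m}=:C_S$ on that box, I get $e^{-I_+}\ge e^{-C_S}$ there. Hence it suffices to bound from below
\[
\int_{(0,S]^2}\frac{I_+-I_-}{s_1s_2}\,\dd s_1\,\dd s_2=\int_{\mathbb{R}}\Bigg(\int_{(0,S]^2}\frac{h\big(s_1m(-x),s_2m(t-x)\big)}{s_1s_2}\,\dd s_1\,\dd s_2\Bigg)\dd x,
\]
where $h(p,q)=(p+q)^\alpha-\vert p-q\vert^\alpha\ge 0$ and Tonelli justifies the interchange.

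The crux is the inner integral for fixed $x$. With $A=m(-x)$, $B=m(t-x)$, the substitution $u=s_1A$, $v=s_2B$ turns it into $\phi(SA,SB)$, where $\phi(a,b)=\int_0^{a}\int_0^{b}h(u,v)/(uv)\,\dd u\,\dd v$. Because $h$ is homogeneous of degree $\alpha$, the kernel $h(u,v)/(uv)$ is homogeneous of degree $\alpha-2$, so $\phi$ is homogeneous of degree $\alpha$ and nondecreasing in each argument; consequently $\phi(a,b)\ge\phi\big(\min(a,b),\min(a,b)\big)=c_0\min(a,b)^\alpha$ with $c_0=\phi(1,1)$. The main obstacle is precisely to show $c_0\in(0,\infty)$: positivity is clear, and finiteness holds because the degree-$(\alpha-2)$ singularity of $h(u,v)/(uv)$ at the origin is integrable over a bounded region exactly when $\alpha>0$, while away from the origin $h$ is continuous. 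It is worth stressing that the naive pointwise estimate $h(p,q)\gtrsim (p\wedge q)^\alpha$ is \emph{false} (for $\alpha=1/2$, $q$ fixed and $p\to\infty$ one has $h\to0$ while $(p\wedge q)^\alpha$ stays fixed); the minimum reappears only after integrating in $(s_1,s_2)$, which is exactly what the truncation and substitution exploit. This yields $\phi(SA,SB)\ge c_0S^\alpha\min\!\big(m^\alpha(-x),m^\alpha(t-x)\big)$, and integrating over $x$ gives the bound $c_0S^\alpha\kappa_t$ with $\kappa_t=\int_{\mathbb{R}}\min\!\big(m^\alpha(-x),m^\alpha(t-x)\big)\dd x$.

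Collecting the estimates, $\int_{\mathbb{R}_+^2}(\CFB{+}{-}-\CFB{+}{+})/(s_1s_2)\,\dd s_1\,\dd s_2\ge c_0S^\alpha e^{-C_S}\kappa_t$, so the target integral is at least a positive constant times $\int_{\mathbb{R}}\kappa_t\,\dd t$. A final Tonelli interchange together with the substitutions $w=t-x$ and $y=-x$ identifies $\int_{\mathbb{R}}\kappa_t\,\dd t$ with $\int_{\mathbb{R}}\int_{\mathbb{R}}(m^\alpha(x)\wedge m^\alpha(t))\,\dd x\,\dd t$, which is infinite by hypothesis \eqref{eq: LRD_min_m}. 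Hence the integral in \Cref{def: LRD_Spodarev} is infinite for $\mu=\delta_0$, and $X$ is long range dependent. The only genuinely delicate point is the homogeneity lemma for $\phi$ and the finiteness of $c_0$; every other step is a routine inequality or a Tonelli interchange justified by nonnegativity.
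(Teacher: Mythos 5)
Your proposal is correct and follows essentially the same route as the paper: the choice $\mu=\delta_0$, the reduction via \Cref{thm: SRD and LRD}(b), the bound $e^{-I_-}-e^{-I_+}\ge (I_+-I_-)e^{-I_+}$, the truncation of the $(s_1,s_2)$-domain to a box so that $e^{-I_+}$ is bounded below by a constant, and the final Tonelli step are exactly the paper's proof in the appendix. The only cosmetic difference is the key lemma: where you derive $\phi(a,b)\ge \phi(1,1)(a\wedge b)^\alpha$ from homogeneity and monotonicity of $\phi$ (checking $\phi(1,1)<\infty$ via the integrable $r^{\alpha-2}$ singularity), the paper's \Cref{lem: inequalities_bundled}(c) obtains the same bound with the explicit constant $C_\alpha=\frac{2}{\alpha}\int_0^1\frac{(1+u)^{\alpha}-(1-u)^{\alpha}}{u}\,\dd u$ by splitting the rectangle and substituting directly.
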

    Additionally, if the kernel function $m$ is eventually monotonic, then we can simplify condition \eqref{eq: LRD_min_m} as follows.
	
	\begin{Corollary}[]
		\label{Corr: LRD}
		Let $X= \{ X(t), t \in \mathbb{R} \}$ be a S$\alpha$S moving average process with parameter $\alpha \in (0, 2)$ and nonnegative kernel function $m \in \lp{\alpha}{\mathbb{R}}$ which is eventually monotonic, i.e.   there is a number $a > 0 $ such that $m$ is monotonically decreasing on $(a, \infty)$ or monotonically increasing on $(-\infty, -a)$. Then, $X$ is long range dependent if
		\begin{align}
		\int_{a}^{\infty} t \,m^{\alpha}(t) \, \dd t = \infty \quad \text{ or } \quad \int_{-\infty}^{-a} t \,m^{\alpha}(t) \, \dd t = -\infty. \label{eq: sufficient_monotonicity}
		\end{align}
		Additionally, if $m$ is symmetric, the two sufficient conditions \eqref{eq: sufficient_monotonicity} are equivalent.
	\end{Corollary}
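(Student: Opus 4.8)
The plan is to derive the corollary from Theorem~\ref{thm: LRD_min_m}: it suffices to show that condition \eqref{eq: sufficient_monotonicity} forces $\int_{\mathbb{R}}\int_{\mathbb{R}}\big(m^\alpha(x)\wedge m^\alpha(t)\big)\,\dd x\,\dd t=\infty$. Writing $h:=m^\alpha\ge 0$, the first step is a layer-cake reduction. Since $a\wedge b=\int_0^\infty \mathds{1}\{a>\lambda\}\,\mathds{1}\{b>\lambda\}\,\dd\lambda$ for $a,b\ge 0$, Tonelli's theorem gives
\[
\int_{\mathbb{R}}\int_{\mathbb{R}}\big(h(x)\wedge h(t)\big)\,\dd x\,\dd t=\int_0^\infty \big|\{h>\lambda\}\big|^2\,\dd\lambda,
\]
where $|\{h>\lambda\}|$ denotes the Lebesgue measure of the superlevel set. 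So the goal becomes $\int_0^\infty|\{h>\lambda\}|^2\,\dd\lambda=\infty$. Note also that $h\in\lp{1}{\mathbb{R}}$, since $\int_{\mathbb{R}}m^\alpha=\ANA{m}<\infty$.

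Assume first $\int_a^\infty t\,h(t)\,\dd t=\infty$. By eventual monotonicity $h$ is nonincreasing on $(a,\infty)$, so for each $\lambda$ the set $\{t>a:\,h(t)>\lambda\}$ is an interval $(a,b(\lambda))$ of length $b(\lambda)-a$, where $b(\lambda):=\sup\{t>a:\,h(t)>\lambda\}\in[a,\infty)$ is nonincreasing in $\lambda$ (it is finite for $\lambda>0$ precisely because $h\in\lp{1}{\mathbb{R}}$). In particular $|\{h>\lambda\}|\ge b(\lambda)-a\ge 0$. Two further applications of Tonelli rewrite the hypothesis and the integrability of $h$ as
\[
\int_a^\infty t\,h(t)\,\dd t=\tfrac12\int_0^\infty\big(b(\lambda)^2-a^2\big)\,\dd\lambda=\infty,\qquad \int_a^\infty h(t)\,\dd t=\int_0^\infty\big(b(\lambda)-a\big)\,\dd\lambda<\infty.
\]
The key algebraic step is the splitting $b^2-a^2=(b-a)^2+2a(b-a)$: integrating in $\lambda$ and using finiteness of $\int_0^\infty(b(\lambda)-a)\,\dd\lambda$ shows that the divergence must sit in the first term, i.e.\ $\int_0^\infty(b(\lambda)-a)^2\,\dd\lambda=\infty$. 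Combined with $|\{h>\lambda\}|\ge b(\lambda)-a\ge 0$ this yields $\int_0^\infty|\{h>\lambda\}|^2\,\dd\lambda=\infty$, and Theorem~\ref{thm: LRD_min_m} then gives long range dependence.

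For the second alternative $\int_{-\infty}^{-a}t\,h(t)\,\dd t=-\infty$, I would reflect: the substitution $t\mapsto-t$ turns it into $\int_a^\infty t\,h(-t)\,\dd t=\infty$, with $t\mapsto h(-t)$ nonincreasing on $(a,\infty)$, while the double integral in \eqref{eq: LRD_min_m} is invariant under $x\mapsto-x$. Hence the previous paragraph applies verbatim to the reflected kernel. Finally, if $m$ is symmetric then $h(-t)=h(t)$, and the same substitution gives $\int_{-\infty}^{-a}t\,h(t)\,\dd t=-\int_a^\infty t\,h(t)\,\dd t$, so the two conditions in \eqref{eq: sufficient_monotonicity} are equivalent.

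The main obstacle is the passage from the weighted tail integral to the $\mathcal{L}^2$-in-$\lambda$ statement about superlevel sets: the hypothesis directly controls $\int_0^\infty\big(b(\lambda)^2-a^2\big)\,\dd\lambda$, whereas Theorem~\ref{thm: LRD_min_m} needs $\int_0^\infty(b(\lambda)-a)^2\,\dd\lambda$, and the discrepancy is exactly the cross term $2a\int_0^\infty(b(\lambda)-a)\,\dd\lambda$. It is the integrability $m\in\lp{\alpha}{\mathbb{R}}$ (equivalently $h\in\lp{1}{\mathbb{R}}$) that makes this cross term finite and lets the divergence survive the subtraction; the point requiring care is the endpoint behaviour of $b(\lambda)$, which may tend to $+\infty$ as $\lambda\to 0^+$ but decreases to $a$ as $\lambda\to\infty$.
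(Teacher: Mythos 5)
Your proposal is correct, and it follows the paper's overall strategy---deduce the corollary from \Cref{thm: LRD_min_m} by showing that \eqref{eq: sufficient_monotonicity} forces the integral in \eqref{eq: LRD_min_m} to diverge---but the verification itself takes a different route. The paper evaluates the minimum pointwise using monotonicity: for $m$ decreasing on $(a,\infty)$,
\begin{align*}
\int_{\mathbb{R}}\int_{\mathbb{R}} \big(m^\alpha(x)\wedge m^\alpha(t)\big)\,\dd x\,\dd t
\;\ge\; \int_a^\infty \bigg( \int_a^t m^\alpha(t)\,\dd x + \int_t^\infty m^\alpha(x)\,\dd x \bigg)\,\dd t
\;\ge\; \int_a^\infty t\,m^\alpha(t)\,\dd t - a\,\Vert m\Vert_\alpha^\alpha,
\end{align*}
and the finiteness of $\Vert m\Vert_\alpha^\alpha$ finishes the argument in two lines. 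You instead pass to the layer-cake representation $\int_0^\infty \vert\{m^\alpha>\lambda\}\vert^2\,\dd\lambda$, encode the monotonicity through the superlevel endpoints $b(\lambda)$, and isolate the divergence with the identity $b^2-a^2=(b-a)^2+2a(b-a)$; your cross term $2a\int_0^\infty (b(\lambda)-a)\,\dd\lambda\le 2a\Vert m\Vert_\alpha^\alpha$ plays precisely the role of the paper's correction $a\Vert m\Vert_\alpha^\alpha$, so both proofs exploit the same mechanism (monotonicity plus $m\in\lp{\alpha}{\mathbb{R}}$), just in different coordinates. The paper's computation is the more economical; your version is longer but sound---you correctly address the finiteness of $b(\lambda)$ for $\lambda>0$ and the negligible endpoint of the superlevel interval---and it has the structural merit of showing that only the superlevel-set geometry of $m^\alpha$ enters the criterion. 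Your reflection argument for the second condition and the equivalence under symmetry supply details that the paper dismisses as analogous, and they are correct.
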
	

		\begin{proof}
		Suppose $m$ is monotonically decreasing on $(a,\infty)$ and compute the integral \eqref{eq: LRD_min_m}. Thus, we have
		\begin{align*}
		&\int_{\mathbb{R}} \int_{\mathbb{R}} (m^\alpha(x) \wedge m^\alpha(t)) \, \dd x \, \dd t \geq  \int_{a}^{\infty} \bigg( \int_{a}^{t} m^{\alpha}(t) \, \dd x + \int_{t}^{\infty} m^{\alpha}(x) \, \dd x \bigg) \, \dd t \geq \int_{a}^{\infty} t m^{\alpha}(t)\, \dd t - a \Vert m \Vert ^{\alpha}_{\alpha}.
		\end{align*}
		The claim follows from the fact that $m \in \lp{\alpha}{\mathbb{R}}$. The case of $m$ monotonically increasing on some interval $(-\infty, -a)$ follows analogously.
	
	\end{proof}
	Now let us give an example of a kernel function $m \in \lp{\alpha}{\mathbb{R} }$ whose corresponding S$\alpha$S moving average is long range dependent if $m \notin \lp{\alpha/2}{\mathbb{R}}$.
	
	\begin{Example}[]
		\label{ex: LRD}
		Suppose we have a S$\alpha$S moving average process $X=\{  X(t), \, t \in \mathbb{R} \}$ with parameter $\alpha \in (0, 2)$ and nonnegative kernel function $m(x) \sim C \vert x \vert  ^{-\delta}$ as $\vert x \vert \rightarrow \infty$
%		\begin{align*}
%		m(x) ={\vert x \vert}^{-\delta}  \mathds{1} \{ \vert x \vert \geq 1 \} +  e^{1-x^{2}} \mathds{1}\{ \vert x \vert < 1 \},
%		\end{align*}
		where $\delta > \frac{1}{\alpha}$ and $C > 0$.
		Obviously, $m \in \lp{\alpha}{\mathbb{R}}$ and $ m(x) \geq \frac{C}{2} \vert x \vert ^{-\delta}   $ where $\vert x \vert \geq a$ for some $a > 0$. Notice that
		\begin{align*}
		\int_{a}^{\infty} t \cdot \bigg( \frac{C}{2} \vert t \vert ^{-\delta} \bigg)^{\alpha} \, \dd t = \bigg( \frac{C}{2} \bigg)^{\alpha}\int_{a}^{\infty}  t^{1-\delta\alpha} \, \dd t = \infty
		\end{align*}
		if $1-\delta\alpha \geq -1$ or, equivalently, $\delta \leq \frac{2}{\alpha}$. Analogously to the proof of \Cref{Corr: LRD}, this implies that $m$ fulfills equation \eqref{eq: LRD_min_m} if $m \notin \lp{\alpha/2}{\mathbb{R}}$. 
		Thus, $X$ is long range dependent if $\delta \in\left(\frac{1}{\alpha}, \frac{2}{\alpha}\right]$ by \Cref{thm: LRD_min_m} and is short range dependent if $\delta > \frac{2}{\alpha}$ by Theorem \ref{thm: SRD_rho}.
	\end{Example}
	
	\begin{Remark}[]
	\label{rem: iff condition}
		On one hand, this example was given to illustrate that the conditions \eqref{eq: sufficient_monotonicity} in \Cref{Corr: LRD} are useful when the kernel function itself is not eventually monotonic but rather asymptotically equivalent to such a function. On the other hand, this example motivates our conjecture that a S$\alpha$S moving average is long range dependent iff $m \notin \lp{\alpha/2}{\mathbb{R}}$. However, the conjecture's proof is still to be found.
	\end{Remark}
	
	%\subsection{Alpha-stable Linear Processes}
	%\label{sec: Alpha-stable linear process}
	
	Similar results as above can be obtained for symmetric $\alpha$-stable linear time series $Y$.
	\begin{Definition}[\cite{Char_Fct_Hosoya}]
		\label{def: linear process}
		Let $\{Z(t), t\in \mathds{Z}\}$ be a sequence of i.i.d.~S$\alpha$S random variables with characteristic function $\CFA{s}=\exp\{-\tau^{\alpha}|s|^{\alpha}\}$, $\tau>0$, $0<\alpha < 2$,
		$s\in \mathbb{R}$. Let $\{a_{j}, j\in \mathds{Z}\}$ be a sequence of numbers such that $\sum_{j=-\infty}^{+\infty}|a_j|<\infty$. The stochastic process defined by
		\begin{align}
		Y(t)=\sum\limits_{j=-\infty}^{+\infty}a_jZ(t-j),\quad  t \in \mathds{Z}, \label{eq: linear process}
		\end{align}
		is called a {\it  linear S$\alpha$S time series}.
	\end{Definition}         
	
	Notice that  $Y$ can be written as a continuous time S$\alpha$S moving average with parameter $\alpha \in (0, 2)$ and kernel function $m(x) = \sum_{j=-\infty}^{\infty} a_j \mathds{1}_{[(j-1)\tau^{\alpha},j\tau^{\alpha})}(x)$ sampled at time instants $t \in \mathbb{Z}$. Thus, $Y$ is positively associated if the coefficients $a_j$ are nonnegative for all $j \in \mathbb{Z}$. Moreover, the function $\rho_t$  simplifies  to $\rho_t= \sum_{j=-\infty}^{\infty}  a^{\alpha/2}_{-j} a^{\alpha/2}_{t-j}$, $t\in \mathds{Z}$.
	
	\begin{Remark}
		\label{rem: time series}
		Theorems \ref{thm: SRD_rho} and \ref{thm: LRD_min_m} as well as \Cref{Corr: LRD} apply to linear processes with the obvious substitute of $\sum_{t=-\infty}^{\infty}$ for $\int_{\mathbb{R}}\, \dd t$. Indeed, 
		let $Y= \{ Y(t), t \in \mathbb{Z} \}$ be  a stationary S$\alpha$S time series  with parameter $\alpha < 2$ and nonnegative coefficients $\{ a_j, j \in \mathbb{Z} \}$. If
		\begin{align}
		\sum_{t = - \infty}^{\infty} \rho_t  < \infty \quad \text{ or, equivalently, } \quad \sum_{j=-\infty}^{\infty} a_j ^{\alpha/2} < \infty,  \label{eq: sufficient conditions SRD linear} 
		\end{align}
		then $Y$ is short range dependent. If 
		\begin{align}
		\sum_{j=-\infty}^{\infty} \sum_{t=-\infty}^{\infty}   (a_j^\alpha \wedge a_t^\alpha) = \infty \label{eq: LRD_min_a},
		\end{align}
		then $Y$ is long range dependent. Additionally, if the coefficients  $a_j$ are  for some $a > 0$ monotonically increasing for all $j < -a$ or monotonically decreasing for all $j > a$ , then $Y$ is long range dependent if
		$
		\sum_{j=a}^{\infty} j\,a_j^{\alpha} = \infty$ or $\sum_{j=-\infty}^{a} j\,a_j^{\alpha} = -\infty $, respectively.
	\end{Remark}
	
	%%%%%%%%%%%%%%%%%%%% MAX STABLE  %%%%%%%%%%%%%%%%%%%%%%%%%%%%%%%%%%%%%%%%%%%%%%%%%%%%%%%%
	%%%%%%%%%%%%%%%%%%%%%%%%%%%%%%%%%%%%%%%%%%%%%%%%%%%%%%%%%%%%%%%%%%%%%%%%%%%%%%%%%%%%%%%%%
	%%%%%%%%%%%%%%%%%%%%%%%%%%%%%%%%%%%%%%%%%%%%%%%%%%%%%%%%%%%%%%%%%%%%%%%%%%%%%%%%%%%%%%%%%
	
	\section{Long Range Dependence of Max-stable Processes}
	\label{sec: MaxStable}
	
	In this section, we demonstrate that it is possible to use already existing dependence properties to check \Cref{def: LRD_Spodarev} instead of inverting characteristic functions as in the previous sections.

	Any max-stable process is positively associated, see for instance Proposition 5.29 in \cite{resnick-87}. Its dependence properties are typically summarized by its pairwise extremal coefficients $\{\theta_t, \ t \in T\}$ defined via
	$$ \PP(X(0) \leq x, \ X(t) \leq x) = \PP(X(0) \leq x)^{\theta_t} \quad \text{for all } x > 0,$$
	cf.\ \cite{schlather-tawn-03}. By a series expansion of the logarithm, it can be
	seen that $ \theta_t = 2 - \lim_{x \to \infty} \PP(X(t) > x \mid X(0) > x)$.
	Thus, $\theta_t \in [1,2]$, where $\theta_t=2$ corresponds to the case
	of (asymptotic) independence between $X(0)$ and $X(t)$ while $\theta_t=1$ means
	full dependence. Even though the joint distribution of $(X(0),X(t))$ is not
	uniquely determined by $\theta_t$, this characteristic turns out to be a useful
	tool for the identification of dependence properties. For instance, \cite{stoev-08}, \cite{kabluchko-schlather-10} and \cite{dombry-kabluchko-17}  provide necessary and sufficient conditions for ergodicity and mixing of a max-stable process in terms of its pairwise extremal coefficients.
	
	Here, we focus on the property of long-range dependence given by \Cref{def: LRD_Spodarev}. We obtain bounds for $\Cov \big(\mathds{1} \{ X(0) > u \}, \mathds{1} \{ X(t) > v \}\big)$, $t \in T$, $u,v > 0$, by making use of the following lemma.
	
	\begin{Lemma}[] \label{lem:bounds-cdf}
		Let $(X,Y)$ be a bivariate max-stable random vector with $\alpha$-Fr\'echet
		margins, $\alpha > 0$, and extremal coefficient $\theta \in [1,2]$. Then, we have
		\begin{align*}
		\exp\left( - \frac 1 {u^\alpha} - \frac 1 {v^\alpha} + \frac{2 -\theta}{(u \vee v)^\alpha} \right)
		\leq{} \PP(X \leq u, Y \leq v) 
		\leq{} \exp\left( - \frac 1 {u^\alpha} - \frac 1 {v^\alpha} + \frac{2 -\theta}{(u \wedge v)^\alpha} \right)
		\end{align*} 	
		for all $u,v > 0$.
	\end{Lemma}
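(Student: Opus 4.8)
The plan is to pass to the exponent function of the max-stable vector and reduce the two inequalities to a single pointwise estimate for a minimum. First I would invoke the de Haan spectral representation of a bivariate max-stable vector with $\alpha$-Fr\'echet margins: there is a finite measure $H$ on the unit simplex $\mathbb{S}=\{(w_1,w_2): w_1,w_2\ge 0,\ w_1+w_2=1\}$ with $\int_{\mathbb{S}} w_1\, H(\dd w)=\int_{\mathbb{S}} w_2\, H(\dd w)=1$ such that
\[
\PP(X \le u,\, Y \le v) = \exp\left( -\int_{\mathbb{S}} \max\!\left( \frac{w_1}{u^\alpha},\, \frac{w_2}{v^\alpha} \right) H(\dd w) \right), \qquad u,v>0.
\]
The marginal normalization of $H$ recovers $\PP(X\le u)=\exp(-u^{-\alpha})$ and the analogous statement for $Y$, matching the $\alpha$-Fr\'echet law, while evaluating on the diagonal and comparing with $\PP(X\le x, Y\le x)=\PP(X\le x)^{\theta}$ identifies $\int_{\mathbb{S}} \max(w_1,w_2)\, H(\dd w)=\theta$.

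The key algebraic step is to rewrite the maximum via $\max(a,b)=a+b-\min(a,b)$. Applying this inside the integral and using the normalization $\int_{\mathbb{S}} w_1\,H(\dd w)=\int_{\mathbb{S}} w_2\,H(\dd w)=1$, I obtain
\[
\PP(X \le u,\, Y \le v) = \exp\left( -\frac{1}{u^\alpha} - \frac{1}{v^\alpha} + \int_{\mathbb{S}} \min\!\left( \frac{w_1}{u^\alpha},\, \frac{w_2}{v^\alpha} \right) H(\dd w) \right).
\]
Thus it suffices to sandwich the remaining integral between $(2-\theta)/(u\vee v)^\alpha$ and $(2-\theta)/(u\wedge v)^\alpha$.

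For the bounds I would use that both $u^\alpha$ and $v^\alpha$ lie between $(u\wedge v)^\alpha$ and $(u\vee v)^\alpha$. Replacing $u^\alpha$ and $v^\alpha$ by the common larger value $(u\vee v)^\alpha$ can only shrink each term inside the minimum, which gives the pointwise bound $\min(w_1/u^\alpha, w_2/v^\alpha)\ge \min(w_1,w_2)/(u\vee v)^\alpha$; replacing them by the smaller value $(u\wedge v)^\alpha$ can only enlarge it, giving $\min(w_1/u^\alpha, w_2/v^\alpha)\le \min(w_1,w_2)/(u\wedge v)^\alpha$. Integrating these against $H$ and using $\int_{\mathbb{S}} \min(w_1,w_2)\, H(\dd w)=\int_{\mathbb{S}}(w_1+w_2)\,H(\dd w)-\int_{\mathbb{S}}\max(w_1,w_2)\,H(\dd w)=2-\theta$ yields both inequalities simultaneously.

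I do not anticipate a genuine obstacle; once the representation is in place the argument is essentially bookkeeping. The only point needing mild care is justifying the representation together with its normalizing constants, which one can sidestep by first transforming to unit Fr\'echet margins through $x\mapsto x^\alpha$ (a monotone map preserving $\theta$), proving the case $\alpha=1$, and transforming back. As a variant of the final estimate I would note that $D(u,v):=\int_{\mathbb{S}} \min(w_1/u^\alpha, w_2/v^\alpha)\, H(\dd w)$ is nonincreasing in each argument (each integrand is decreasing in $u$ and in $v$) and satisfies $D(x,x)=(2-\theta)x^{-\alpha}$, so evaluating on the diagonal at $u\vee v$ and at $u\wedge v$ sandwiches $D(u,v)$ directly.
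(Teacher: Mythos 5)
Your proof is correct, but it takes a genuinely different route through the same starting point. Both arguments begin with the spectral representation: the paper writes $-\log\PP(X\le u, Y\le v)=\EE\left[(W_X/u)^\alpha \vee (W_Y/v)^\alpha\right]$ with $\EE W_X^\alpha=\EE W_Y^\alpha=1$ and $\theta=\EE(W_X^\alpha\vee W_Y^\alpha)$, which carries exactly the same data as your simplex measure $H$. The paper then splits into the cases $u\le v$ and $v<u$ via indicator functions, rewrites each argument of the maximum as a term over $(u\vee v)^\alpha$ plus a correction term, and bounds the maximum of the sums by the sum of the maxima; the lower and upper bounds each require their own case-by-case computation. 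You instead use the identity $\max(a,b)=a+b-\min(a,b)$ to get $-\log\PP(X\le u,Y\le v)=u^{-\alpha}+v^{-\alpha}-\int_{\mathbb{S}}\min\left(w_1/u^\alpha,\,w_2/v^\alpha\right)H(\dd w)$, after which both inequalities follow simultaneously from a single pointwise sandwich of the minimum (replace both denominators by $(u\vee v)^\alpha$ or by $(u\wedge v)^\alpha$) combined with the normalization $\int_{\mathbb{S}}\min(w_1,w_2)\,H(\dd w)=2-\theta$. Your version is cleaner and more symmetric: it avoids the case analysis, produces both bounds at once, and makes transparent why $2-\theta$ is precisely the relevant quantity (it is the $H$-mass of the pointwise minimum); your closing remark that $D(u,v)=\int_{\mathbb{S}}\min(w_1/u^\alpha,w_2/v^\alpha)\,H(\dd w)$ is monotone in each argument with $D(x,x)=(2-\theta)x^{-\alpha}$ is an equally valid shortcut for the sandwich. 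What the paper's route buys in exchange is that it works directly with the expectation form and never needs the max/min identity or the simplex normalization beyond $\EE W_X^\alpha=\EE W_Y^\alpha=1$. Either proof could replace the other without loss.
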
	
	
	\begin{proof}
		It is well-known that the cumulative distribution function of a bivariate 
		max-stable random vector $(X,Y)$ with $\alpha$-Fr\'echet margins is of the 
		form
		$$ \PP(X \leq u, Y \leq v) = \exp\left(- \EE\left[ \left( \frac{W_X}{u} \vee \frac{W_Y}{v} \right)^\alpha \right] \right), \quad u,v \geq 0, $$
		where $(W_X,W_Y)$ is a bivariate random vector with $\EE W_X^\alpha = \EE W_Y^\alpha = 1$ 
		cf.~Chapter 5 in \cite{resnick-87}, for instance. This so-called \textit{spectral
			vector} $(W_X,W_Y)$ is closely connected to the extremal coefficient via the 
		relation $\theta = \EE( W_X^\alpha \vee W_Y^\alpha )$. Thus, we obtain
		\begin{align}
		& -\log \PP(X \leq u, Y \leq v) {}={} \EE\left[ \left(\frac{W_X} u\right)^\alpha \vee \left(\frac{W_Y} v\right)^\alpha \right] \nonumber\\
		={}& \EE \bigg[ \phantom{\vee} \left( \frac{W_X^\alpha}{(u \vee v)^\alpha} + W_X^\alpha \cdot \left( \frac 1 {(u \wedge v)^\alpha} - \frac 1 {(u \vee v)^\alpha} \right) \cdot \mathbf{1}\{u \leq v\} \right) \nonumber \\  
		& \phantom{\EE \bigg[} \vee \left( \frac{W_Y^\alpha}{(u \vee v)^\alpha} + W_Y^\alpha \cdot \left( \frac 1 {(u \wedge v)^\alpha} - \frac 1 {(u \vee v)^\alpha} \right) \cdot \mathbf{1}\{v < u\} \right) \bigg]. \label{eq:lem}
		\end{align}     
		Distinguishing between the two cases  $u \leq v$ and $v < u$, it can be seen
		that the right-hand side of equation \eqref{eq:lem} can be bounded from above by 
		\begin{align*}    
		& \frac{1}{(u \vee v)^\alpha} \EE\left( W_X^\alpha \vee W_Y^\alpha \right) 
		+ \left( \frac 1 {(u \wedge v)^\alpha} - \frac 1 {(u \vee v)^\alpha} \right) \cdot \EE\left( W_X^\alpha \cdot \mathbf{1}\{u \leq v\} + W_Y^\alpha \cdot \mathbf{1}\{v < u\}\right) \\
		={}& \frac{\theta}{(u \vee v)^\alpha} + \left( \frac 1 {u^\alpha} + \frac 1 {v^\alpha} - \frac 2 {(u \vee v)^\alpha} \right) \cdot 1 
		{}={} - \frac{2-\theta}{(u \vee v)^\alpha} + \frac{1}{u^\alpha} + \frac{1}{v^\alpha},
		\end{align*}
		where we used the fact that $\EE W_X^\alpha = \EE W_Y^\alpha = 1$. 
		This gives the lower bound stated in the lemma. Analogously, we obtain
		\begin{align*}
		& -\log \PP(X \leq u, Y \leq v) {}={} \EE\left[ \left(\frac{W_X} u\right)^\alpha \vee \left(\frac{W_Y} v\right)^\alpha \right] \\
		={}& \EE \bigg[ \phantom{\vee} \left( \frac{W_X^\alpha}{(u \wedge v)^\alpha} - W_X^\alpha \cdot \left( \frac 1 {(u \wedge v)^\alpha} - \frac 1 {(u \vee v)^\alpha} \right) \cdot \mathbf{1}\{v < u\} \right) \\  
		& \phantom{\EE \bigg[} \vee \left( \frac{W_Y^\alpha}{(u \wedge v)^\alpha} - W_Y^\alpha \cdot \left( \frac 1 {(u \wedge v)^\alpha} - \frac 1 {(u \vee v)^\alpha} \right) \cdot \mathbf{1}\{u \leq v\} \right) \bigg] \\   
		\geq{}& \frac{1}{(u \wedge v)^\alpha} \EE\left( W_X^\alpha \vee W_Y^\alpha \right) 
		- \left( \frac 1 {(u \wedge v)^\alpha} - \frac 1 {(u \vee v)^\alpha} \right) \cdot \EE\left( W_X^\alpha \cdot \mathbf{1}\{v < u\} + W_Y^\alpha \cdot \mathbf{1}\{u \leq v\}\right) \\
		={}& \frac{\theta}{(u \wedge v)^\alpha} - \left( \frac{2}{(u \wedge v)^\alpha} - \frac 1 {u^\alpha} - \frac 1 {v^\alpha} \right) \cdot 1
		= - \frac{2-\theta}{(u \wedge v)^\alpha} + \frac 1 {u^\alpha} + \frac 1 {v^\alpha},
		\end{align*}
		which gives the corresponding upper bound.
	\end{proof}
	
	\begin{Remark}[]
		Note that the lower bound in Lemma \ref{lem:bounds-cdf} corresponds to the bound 
		given in \cite{strokorb-schlather-15}. For the so-called Molchanov--Tawn model,
		we even have
		\begin{align*}
		\PP(X \leq u, Y \leq v) ={}
		\exp\left( - \frac 1 {u^\alpha} - \frac 1 {v^\alpha} + \frac{2 -\theta}{(u \vee v)^\alpha} \right), \quad u, v > 0,
		\end{align*} 
		i.e.\ the bound is sharp in this case.
	\end{Remark}
	
	Using \Cref{lem: Covariance with joint and marginal distr}, Lemma \ref{lem:bounds-cdf} yields the following bounds for $\Cov \big(\mathds{1} \{ X(0) > u \}, \mathds{1} \{ X(t) > v \}\big)$:
	\begin{align*}
	\exp\left(- \frac 1 {u^\alpha} - \frac 1 {v^\alpha}\right) \cdot
	\left[ \exp\left(\frac{2 -\theta_t}{(u \vee v)^\alpha} \right) - 1\right] 
	\leq{}& \Cov \big(\mathds{1} \{ X(0) > u \}, \mathds{1} \{ X(t) > v \}\big) \\
	\leq{}& \exp\left(- \frac 1 {u^\alpha} - \frac 1 {v^\alpha}\right) \cdot
	\left[ \exp\left(\frac{2 -\theta_t}{(u \wedge v)^\alpha} \right) - 1\right] 
	\end{align*}
	for $u, v > 0$ and $\Cov \big(\mathds{1} \{ X(0) > u \}, \mathds{1} \{ X(t) > v \}\big) = 0$ if $u \wedge v \leq 0$ due to the $\alpha$-Fr\'echet margins.
	Consequently, one obtains for the integral in \eqref{eq: LRD_Spodarev} that
	\begin{align}
	& \int_{\mathbb{R}_+} \int_{\mathbb{R}_+} \exp\left(- \frac 1 {u^\alpha} - \frac 1 {v^\alpha}\right) 
	\int_{T}  \left[\exp\left(\frac{2 -\theta_t}{(u \vee v)^\alpha} \right) - 1 \right] \, \dd t 
	\, \mu(\dd u) \, \mu(\dd v) \nonumber \\
	\leq{}& \int_{T} \int_{\RR_+} \int_{\RR_+} \big\vert\Cov \big(\mathds{1} \{ X(0) > u \}, \mathds{1} \{ X(t) > v \}\big)\big\vert \, \mu(\dd u) \, \mu(\dd v) \, \dd t \nonumber \\
	\leq{}& \int_{\mathbb{R}_+} \int_{\mathbb{R}_+} \exp\left(- \frac 1 {u^\alpha} - \frac 1 {v^\alpha}\right) 
	\int_{T}  \left[\exp\left(\frac{2 -\theta_t}{(u \wedge v)^\alpha} \right) - 1 \right] \, \dd t 
	\, \mu(\dd u) \, \mu(\dd v). \label{eq:bounds-int-cov}
	\end{align}

	From the lower and the upper bound in \eqref{eq:bounds-int-cov}, we directly obtain a necessary and sufficient condition for long range dependence. Interestingly, unlike in case of $\alpha$-stable processes, the criterion does not depend on $\alpha>0$.
	
	\begin{Theorem}[] \label{prop:lrd}
		Let $X=\{X(t), \ t \in T\}$ be a stationary max-stable process with $\alpha$-Fr\'echet marginal distributions, $\alpha > 0$, 
		and pairwise extremal coefficients $\{\theta_t, \, t \in T\}$.
		Then, $X$ is long range dependent if and only if 
		\begin{equation} \label{eq:cond-lrd}
		\int_{T} (2-\theta_t) \, \dd t = \infty. 
		\end{equation}.
	\end{Theorem}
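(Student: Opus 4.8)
The plan is to read off both implications of the claimed equivalence directly from the two–sided bound \eqref{eq:bounds-int-cov}, which already sandwiches the LRD criterion integral between the quantities $\int_{\RR_+}\int_{\RR_+} \exp(-u^{-\alpha}-v^{-\alpha})\int_T[\exp((2-\theta_t)/(u\vee v)^\alpha)-1]\,\dd t\,\mu(\dd u)\,\mu(\dd v)$ and its counterpart with $u\wedge v$. Everything will then reduce to two elementary facts about $x\mapsto e^x-1$, together with the standing constraint $\theta_t\in[1,2]$, so that $0\le 2-\theta_t\le 1$. Since a max-stable process is positively associated, the excursion covariance is nonnegative and the bound \eqref{eq:bounds-int-cov} may be compared termwise with \Cref{def: LRD_Spodarev}.

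For the sufficiency of $\int_T(2-\theta_t)\,\dd t<\infty$ for short range dependence, I would work with the upper bound in \eqref{eq:bounds-int-cov}. Because $x\mapsto(e^x-1)/x$ is increasing and $0\le(2-\theta_t)/(u\wedge v)^\alpha\le 1/(u\wedge v)^\alpha$, one obtains the linearization
\[
\exp\!\Big(\frac{2-\theta_t}{(u\wedge v)^\alpha}\Big)-1 \;\le\; \Big(\exp\!\Big(\frac{1}{(u\wedge v)^\alpha}\Big)-1\Big)\,(2-\theta_t).
\]
Integrating in $t$ peels off the finite factor $\int_T(2-\theta_t)\,\dd t$, leaving the kernel $\exp(-u^{-\alpha}-v^{-\alpha})\big(\exp(1/(u\wedge v)^\alpha)-1\big)$ to be integrated against $\mu\otimes\mu$. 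The decisive observation is that this kernel is bounded by $1$: writing (w.l.o.g.) $u\le v$ and $p=u^{-\alpha}\ge q=v^{-\alpha}$, it equals $e^{-q}(1-e^{-p})\le 1$. Hence the $\mu\otimes\mu$-integral is at most $\mu(\RR)^2<\infty$, so the triple integral is finite for every finite $\mu$ and $X$ is SRD.

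For the necessity — that $\int_T(2-\theta_t)\,\dd t=\infty$ forces long range dependence — I would use the lower bound in \eqref{eq:bounds-int-cov} together with the minorant $e^x-1\ge x$. For any fixed $u,v>0$ this gives
\[
\int_T\Big[\exp\!\Big(\frac{2-\theta_t}{(u\vee v)^\alpha}\Big)-1\Big]\,\dd t \;\ge\; \frac{1}{(u\vee v)^\alpha}\int_T(2-\theta_t)\,\dd t \;=\;\infty.
\]
It then suffices to exhibit a single finite measure $\mu$ charging $(0,\infty)$; the cleanest choice is $\mu=\delta_1$, for which the lower bound collapses to $e^{-2}\int_T(2-\theta_t)\,\dd t=\infty$. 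By \Cref{def: LRD_Spodarev} the existence of such a $\mu$ means $X$ is long range dependent, which closes the equivalence.

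I expect the only genuine subtlety to lie in the sufficiency direction, namely the uniform estimate $\exp(-u^{-\alpha}-v^{-\alpha})\big(\exp(1/(u\wedge v)^\alpha)-1\big)\le 1$: without this cancellation the $\mu\otimes\mu$-integral could a priori blow up near the origin, where $1/(u\wedge v)^\alpha\to\infty$, and the argument would fail for measures $\mu$ with mass accumulating at $0$. The interplay of the two exponentials is exactly what tames this. The necessity direction, by contrast, is essentially immediate once the lower bound and the inequality $e^x-1\ge x$ are in hand.
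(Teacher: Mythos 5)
Your proof is correct and follows essentially the same route as the paper's: the LRD direction (choose $\mu=\delta_1$, use the lower bound in \eqref{eq:bounds-int-cov} and $e^x\ge 1+x$) is identical, and your SRD direction rests on the same upper bound together with the same key fact $0\le 2-\theta_t\le 1$ and the same cancellation between $\exp(-1/(u\wedge v)^\alpha)$ and $\exp(1/(u\wedge v)^\alpha)-1$. The only cosmetic difference is that you linearize via the convexity inequality $e^{cx}-1\le x\,(e^c-1)$ for $x\in[0,1]$, whereas the paper reaches the same uniform bound by a term-by-term power-series comparison using $\int_T(2-\theta_t)^k\,\dd t\le\int_T(2-\theta_t)\,\dd t$; these are the same estimate in different clothing.
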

	\begin{proof}
		First, assume that \eqref{eq:cond-lrd} holds. Choosing the finite measure $\mu = \delta_{\{1\}}$ as the Dirac measure, we obtain from the lower bound in \eqref{eq:bounds-int-cov} and the inequality $ \exp(x) \geq 1 + x$ for all $x \geq 0$ that
		\begin{align*}
		& \int_{T} \int_{\RR} \int_{\RR} |\Cov \big(\mathds{1} \{ X(0) > u \}, \mathds{1} \{ X(t) > v \}\big)| \, \delta_{\{1\}}(\dd u) \, \delta_{\{1\}}(\dd v) \, \dd t \\
		\geq{}& \int_{\mathbb{R}_+} \int_{\mathbb{R}_+} \exp\left(- \frac 1 {u^\alpha} - \frac 1 {v^\alpha}\right) 
		\int_{T}  \left[\exp\left(\frac{2 -\theta_t}{(u \vee v)^\alpha} \right) - 1 \right] \, \dd t 
		\, \delta_{\{1\}}(\dd u) \, \delta_{\{1\}}(\dd v) \\
		={}& \exp(-2) \cdot  \int_{T}  \left[\exp\left(2 -\theta_t\right) - 1 \right] \, \dd t \geq  \exp(-2) \cdot  \int_{T}  (2 -\theta_t) \, \dd t = \infty.
		\end{align*}
		
		Conversely, assume that \eqref{eq:cond-lrd} does not hold, i.e.\
		$$ C = \int_{T} (2 -\theta_t) \, \dd t < \infty. $$
		As  $0 \leq 2 - \theta_t \leq 1$ for all $t \in T$, we obtain that
		\begin{align*}
		\int_T (2-\theta_t)^k \, \dd t \leq \int_T (2-\theta_t) \, \dd t = C
		\end{align*}
		for all $k \in \NN$ and therefore
		\begin{align*}
		& \exp(-u^{-\alpha}) \int_T \left[ \exp((2-\theta_t)u^{-\alpha}) -1\right] \, \dd t {}={} \exp(-u^{-\alpha}) \int_T \sum_{k=1}^\infty \frac {u^{-\alpha k}} {k!} (2 -\theta_t)^k \, \dd t \\
		={}& \exp(-u^{-\alpha}) \sum_{k=1}^\infty \frac {u^{-\alpha k}} {k!} 
		\int_T (2-\theta_t)^k \, \dd t {}\leq{} \exp(-u^{-\alpha})  \sum_{k=1}^\infty \frac {u^{-\alpha k}} {k!} C \leq C
		\end{align*}
		for all $u \geq 0$. Combining this inequality with the upper bound in \eqref{eq:bounds-int-cov}, we have
		\begin{align*}
		& \int_{T} \int_{\RR} \int_{\RR} \big\vert\Cov \big(\mathds{1} \{ X(0) > u \}, \mathds{1} \{ X(t) > v \}\big)\big\vert \, \mu(\dd u) \, \mu(\dd v) \, \dd t \\
		\leq{}& \int_{\mathbb{R}_+} \int_{\mathbb{R}_+} \int_{T}\exp\left(- \frac 1 {u^\alpha} - \frac 1 {v^\alpha}\right) 
		\int_{T}  \left[\exp\left(\frac{2 -\theta_t}{(u \wedge v)^\alpha} \right) - 1 \right] \, \dd t \, \mu(\dd u) \, \mu(\dd v) \\
		\leq{}& \int_{\mathbb{R}_+} \int_{\mathbb{R}_+} \int_{T}  \exp\left(- \frac 1 {(u \wedge v)^\alpha} \right) 
		\int_{T}  \left[\exp\left(\frac{2 -\theta_t}{(u \wedge v)^\alpha} \right) - 1 \right] \, \dd t \, \mu(\dd u) \, \mu(\dd v) \\
		\leq{}& \int_{\mathbb{R}_+} \int_{\mathbb{R}_+} C \, \mu(\dd u) \, \mu(\dd v) \leq C  \mu^2(\mathbb{R}_+) 
		\end{align*}
		for any finite measure $\mu$ on $\RR$. Thus, $X$ is short range dependent.
	\end{proof}
	
	\begin{Example}
		Here, we consider two popular examples of max-stable processes, namely the extremal Gaussian process and the Brown--Resnick process.
		\begin{enumerate}
			\item The extremal Gaussian process \citep{schlather02} is a max-stable process with
				$1$-Fr\'echet marginal distributions and finite-dimensional distributions of the form
				$$ \PP(X(t_1) \leq x_1, \ldots, X(t_d) \leq x_d)
				= \exp\left( - \sqrt{2\pi} \max_{i=1,\ldots,d} \frac{\max\{W(t_i),0\}}{x_i} \right),
				\quad t_i \in T, \, x_i > 0, $$
				where $\{W(t), \, t \in T\}$ is a centered stationary Gaussian process on $T=\RR$. The 
				extremal coefficients of the extremal Gaussian process are given by
			$$ \theta_t = 1 + \sqrt{1 - \frac{1+\rho_t}{2}}, \quad t \in T,$$
			where $\rho_t = \Corr(W(t),W(0))$ denotes the correlation function of the underlying Gaussian process $W$. Provided that $\rho_t \geq 0$ for all $t \in T$, we have that $\theta_t \leq 1 + \sqrt{1/2}$, and, consequently,
			$$ \int_{T} (2 -\theta_t) \, \dd t \geq \int_{\RR} (1 - \sqrt{1/2}) \, \dd t = \infty,$$
			that is, the process is long range dependent by \Cref{prop:lrd}.
			\item The Brown--Resnick process \citep{KSH09} is a max-stable process with $1$-Fr\'echet marginal distributions and finite-dimensional distributions of
				the form 
				$$ \PP(X(t_1) \leq x_1, \ldots, X(t_d) \leq x_d)
				= \exp\left( - \max_{i=1,\ldots,d} \frac{\exp(W(t_i) - \frac 1 2 \Var[W(t_i)])}{x_i} \right), $$
				$t_i \in T, \, x_i > 0,$ where $W$ is a centered Gaussian process with stationary increments on $T=\RR$. The extremal coefficients of the Brown--Resnick process can be expressed in terms of the 
			variogram $\gamma(t) = \EE[(W(t)-W(0))^2]$, $t \in \RR$, of the underlying Gaussian process $W$ via the relation
			$$ \theta_t = 2 \Phi\left( \frac{\sqrt{\gamma(t)}}{2} \right), \quad t \in \RR,$$
			where $\Phi$ denotes the standard normal distribution function. 
			
			Now assume that there exists some constant $C > 8$ such that
			$\gamma(t) \geq C \log |t|$ for $|t|$ being sufficiently large. From Mill's ratio $1 - \Phi(x) \sim x^{-1} \varphi(x)$ as $x \to \infty$
			with $\varphi$ being the standard normal density function, it follows that
			\begin{align*}
			2 - \theta_t {}={}& 2 [ 1 - \Phi(\sqrt{\gamma(t)}/2)]
			{}\leq{} 2 [1 - \Phi(\sqrt{C \log|t|} / 2)] \\
			\sim{}& \frac{4}{\sqrt{C \log|t|}} \varphi(\sqrt{C \log|t|} / 2) 
			{}={} \frac{2 \sqrt{2}}{\sqrt{\pi C \log|t|}} |t|^{-C/8}, \quad |t| \to \infty,
			\end{align*}
			is integrable. Thus, by \Cref{prop:lrd}, the Brown--Resnick process is SRD if $$\liminf_{|t| \to \infty} \gamma(t) / \log |t| > 8,$$ which is true, for instance, for any fractional Brownian motion $W$.
			
			If, in contrast, the variogram $\gamma$ of the underlying Gaussian process $W$ is bounded as in the case of a stationary process, we obtain that $\sup_{t \in T} \theta_t < 2$. Thus, analogously to the case of the extremal Gaussian process, the Brown-Resnick process can be shown to be LRD.
			
			Note that these conditions also appear in the literature when analyzing the existence of a mixed moving maxima (M3) representation of a Brown-Resnick process:
			In \cite{KSH09}, it is shown that a M3 representation exists if $\liminf_{|t| \to \infty} \gamma(t) / \log |t| > 8$.
			In case of a bounded variogram, however, the resulting Brown-Resnick is not even mixing. As sufficient and necessary conditions for the existence of a M3 representation
			are stated in terms of the asymptotic behavior of the sample paths of the underlying Gaussian process rather than in terms of its variogram \citep[cf.][for instance]{wang-stoev-10, dombry-kabluchko-17},
			to the best of our knowledge, there is no general treatment of the gap between these two cases. Similarly, for SRD/LRD,  a detailed analysis of further cases is beyond the scope of this paper. 
		\end{enumerate}	
		
	\end{Example}

	\begin{Remark}
		In this section, using known dependency properties allows to avoid complex calculation such that no restrictions on the index set $T$ are required. In particular, all the results are also valid for max-stable random fields, i.e.\ the case that $T \subset \RR^d$.
	\end{Remark}

	\section{Application to Data}
	\label{sec: data}
	In this section, we want to motivate our theoretical results by showing their applicability to real world data. To do so, let us consider the daily log-returns of the Intel corporation share from Mar 03, 2013 to Aug 21, 2017 depicted in \Cref{fig: log-returns}.
	\begin{figure}
		\centering
		\includegraphics[width=0.8\linewidth]{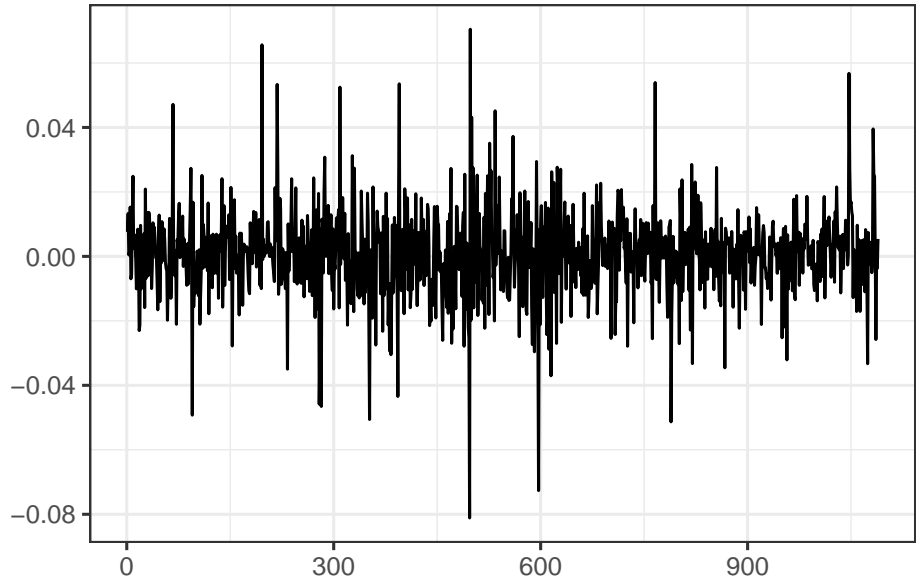}
		\caption{Daily log-returns based on the opening price of the Intel corporation share from Mar 03, 2013 to Aug 21, 2017}
		\label{fig: log-returns}
	\end{figure}
	Preliminary analysis has shown that the marginal distribution of these log-returns fits reasonably well to that of a symmetric $\alpha$-stable distribution with estimated index of stability $\hat{\alpha} = 1{.}56$ and scale parameter $\hat{\sigma} =  0{.}0072$ as depicted in \Cref{fig: marginal-dist}. For simplicity, here we use the simple and consistent estimation procedure proposed by \cite{McCulloch86}.
	\begin{figure}
		\centering
		\includegraphics[width=0.8\linewidth]{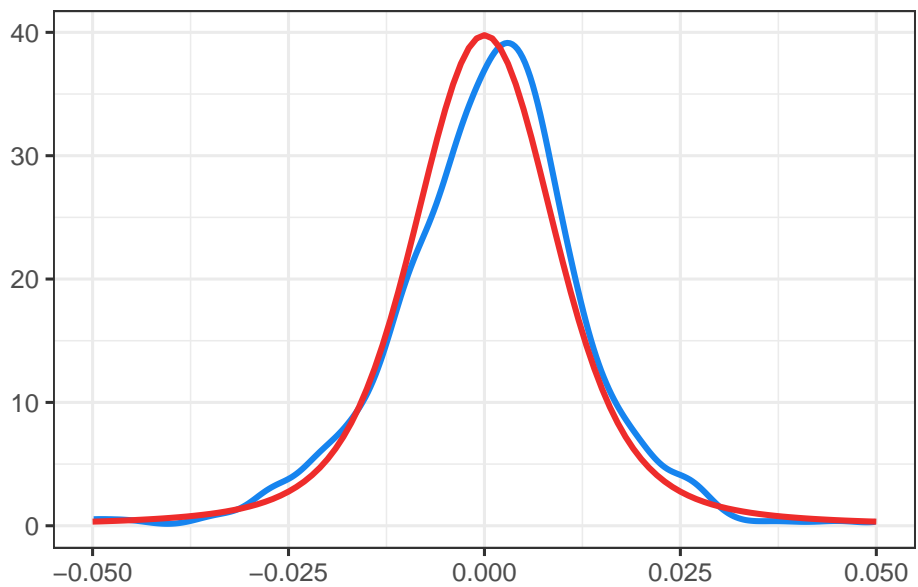}
		\caption{Estimated density of the log-returns  (in blue) compared to the theoretical density of a symmetric $\alpha$-stable distribution with index of stability $\hat{\alpha} = 1{.}56$ and scale parameter $\hat{\sigma} =  0{.}0072$ (in red).}
		\label{fig: marginal-dist}
	\end{figure}
	
	Further, we model this time series using a linear S$\alpha$S process $Y(t)=\sum_{j=-\infty}^{+\infty}a_jZ(t-j)$,  $t \in \mathds{Z}$ with $\alpha \in (0, 2)$, as described in \Cref{def: linear process}. By \Cref{rem: time series}, we can apply our previous continuous-time results from \Cref{sec: AlphaStable} by considering a continuous-time S$\alpha$S moving average $X$ with a piecewise constant kernel function and interpreting the time series $Y$ as $X$ sampled at time instances $t \in \mathbb{Z}$.

	To do so, we estimate a continuous-time kernel function by a non-parametric approach and check the conditions in Theorems \ref{thm: SRD_rho} and \ref{thm: LRD_min_m}. By \Cref{ex: LRD}, it suffices to check the  condition  $m \in \lp{\alpha/2}{\mathbb{R}}$ , if the estimated kernel function exhibits power decaying tails.   \\
	However, to the best of our knowledge, there is no universally applicable non-parametric approach for kernel estimation in our setting. For instance, the procedure proposed by \cite{Kampf2020} estimates the kernel of a S$\alpha$S moving average under certain conditions posed on the underlying kernel function $m$. However, the authors of this particular paper conclude that under their assumptions, $m$ must be bounded and $m \in \lp{p}{\mathbb{R}}$ for all $p \in (1/a, \infty]$ where $a > \max \{ 2, 1/\alpha \}$ which, in particular, implies that $m \in \lp{\alpha/2}{\mathbb{R}}$. Consequently, \Cref{thm: SRD_rho} implies that this kernel estimation procedure is applicable in our setting only if $X$ is SRD.

	Therefore, let us choose a simple parametric  minimal contrast method based on the codifference \begin{align*}
	\tau(t) = \Vert X(0) \Vert ^{\alpha}_{c,\alpha} 
	+ 
	\Vert X(t) \Vert ^{\alpha}_{c,\alpha} 
	- 
	\Vert X(0) - X(t) \Vert ^{\alpha}_{c,\alpha}
	\end{align*} 
	of $X(0)$ and $X(t)$ as defined in \cite[Definition 2.10.1]{SamorodTaqqu94}. 
	Here $\Vert \cdot \Vert_{c,\alpha}$ describes the covariation norm of a S$\alpha$S random variable given by \cite[Definition 2.8.1]{SamorodTaqqu94}. %Notice that this is not to be confused with the $\mathcal{L}^{\alpha}$-norm. In this paper we use the same notation for both of these norms and implicitly differentiate between the two by using either a deterministic argument when referring to the $\mathcal{L}^{\alpha}$-norm or a random argument otherwise.}
	
	We assume that the kernel function of $X$ is causal, i.e., supported on the positive half-line, and  parametrized like
	\begin{align*}
		m(t) = \sum_{k=1}^{\infty} \frac{c}{1 + k^\delta} \mathds{1}\{ t \in [k-1, k), t \geq 0 \},
	\end{align*}
	where $c, \delta > 0$. By \Cref{ex: LRD}, the process $X$ is well-defined iff $\delta > \frac{1}{\alpha}$ and long range dependent iff $\delta \leq \frac{2}{\alpha}$.
	
	 By \cite[Example 3.6.2]{SamorodTaqqu94} we have that $\Vert X(0) \Vert ^{\alpha}_{c,\alpha} = \Vert X(t) \Vert^{\alpha}_{c,\alpha} = \Vert m \Vert ^{\alpha}_{\alpha}$ 
	 and 
	 $\Vert X(0) - X(t) \Vert ^{\alpha}_{c,\alpha} = \Vert m(\cdot) - m(t - \cdot) \Vert ^{\alpha}_{\alpha} $. 
	 By simple calculations, we get that for all $t \in \mathbb{N}$ 
	\begin{align*}
		\Vert m \Vert ^{\alpha}_{\alpha} 
		&= 
		c^{\alpha} \sum_{k=1}^{\infty} (1+k^{\delta})^{-\alpha},\\
		\Vert m(\cdot) - m(t - \cdot) \Vert ^{\alpha}_{\alpha} 
		&= 
		c^{\alpha} \bigg( 
			\sum_{k=1}^{t} (1 + k^{\delta})^{-\alpha} 
			+ 
			\sum_{k=1}^{\infty} \Big\vert (1 + k^{\delta})^{-1} 
			- 
			(1 + (k+t)^{\delta})^{-1} \Big\vert^{\alpha} 
		\bigg).
	\end{align*}
	Note here that despite the process $X$ being defined on the whole real line, we are only interested in sample time points $t \in \mathbb{N}$ for our data analysis. Consequently, the codifference $\tau(t)$ of $X(0)$ and $X(t)$ writes
	\begin{align}
		\tau(t) 
		=
		c^{\alpha} \bigg( 
			\sum_{k=1}^{\infty} (1+k^{\delta})^{-\alpha} 
			+  
			\sum_{k=t+1}^{\infty} (1 + k^{\delta})^{-\alpha} 
			- 
			\sum_{k=1}^{\infty} \Big\vert (1 + k^{\delta})^{-1} - (1 + (k+t)^{\delta})^{-1} \Big\vert^{\alpha}  
		\bigg). \label{eq: tau_theo}
	\end{align}
	By \cite[Prop. 2.8.2]{SamorodTaqqu94} it holds that $\Vert X(t) \Vert ^{\alpha}_{c,\alpha} = \sigma$ 
	and 
	$ \Vert X(0) - X(t) \Vert ^{\alpha}_{c,\alpha} = \sigma_t$ 
	for all $t \in \mathbb{N}$ where $\sigma$ and $\sigma_t$ are the scale parameters of $X(0)$ and $X(0)-X(t)$, respectively. We compare the theoretical quantity \eqref{eq: tau_theo} to
	\begin{align*}
		\hat{\tau}(t) =   2 \hat{\sigma} - \hat{\sigma_t} 
	\end{align*}  
	where $\hat{\sigma}$ and $\hat{\sigma_t}$ are estimators of $\sigma$ and $\sigma_t$, respectively. Again, we use the approach proposed by \cite{McCulloch86} to estimate $\sigma$. When estimating $\hat{\sigma_t}$, we do the same based on computed observations $X(i) - X(i + t)$, $i =1, \dots, n-t$, where $n$ denotes the length of the original sample. 
	
	Now, let us estimate the parameters $\delta$ and $c$ by minimizing the $\mathcal{L}^2$-distance of $\tau$ and $\hat{\tau}$ based on the first 25 time instances. We choose this value based on the computation costs of a higher threshold and on the fact that this value has proven useful in simulation studies as part of the preliminary analysis. Using our procedure, we estimate $\hat{\delta} = 1{.}0474$ and $\hat{c} = 0{.}0019$. To validate our estimation, we ran a parametric bootstrap and simulated 1000 S$\hat{\alpha}$S time series with kernel parameters $\hat{\delta}$ and $\hat{c}$ to get a grasp on the variance of $\tau$ in our chosen model. The results of our parametric bootstrap are depicted in \Cref{fig: bootstrap}. It shows that despite some discrepancies with regard to the empirically computed $\hat{\tau}$, which is to be expected in real data, our model exhibits a reasonable fit for the data set. 
	
 	Notice that, due to the empirical estimation of the scale parameters $\sigma$ and $\sigma_t$, it happens that $\hat{\tau}(t)<0$ for some $t>0$, which is impossible for the theoretical codifference $\tau$. This, however, does not substantially affect the quality of the fit of $\hat{\tau}$ to $\tau$. By the same parametric bootstrap we find the standard deviations of $0{.}1802$ for $\delta$ and $0{.}0005$ for $c$. It holds that $\hat{\delta} = 1{.}0474 < 2/\alpha = 1{.}2821$ which implies by \Cref{ex: LRD} that the log-returns of Intel Corporation are long range dependent.
	\begin{figure}
		\centering
		\includegraphics[width=0.85\linewidth]{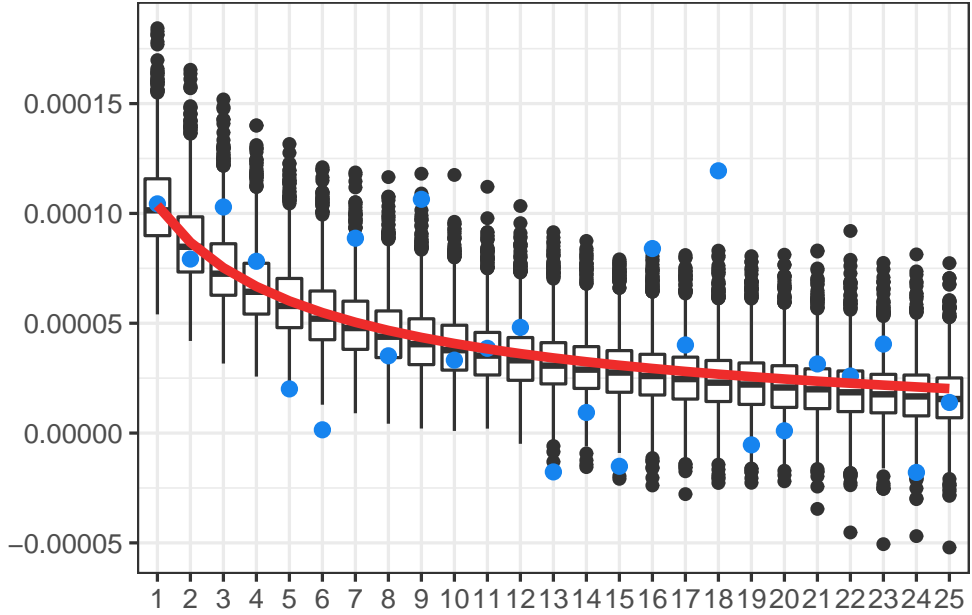}
		\caption{Parametric bootstrap of $\tau(t)$. Black: boxplots of $\hat{\tau}(t)$ based on 1000 S$\hat{\alpha}$S time series simulated with kernel parameters $\hat{\delta}$ and $\hat{c}$; Blue: $\hat{\tau}(t)$ based on data; Red: kernel function with parameters $\hat{\delta}$ and $\hat{c}$ (plotted as a line for better visibility).}
		\label{fig: bootstrap}
	\end{figure}

	\FloatBarrier
	\section*{Data Availability Statement}
	The data used in this paper was provided by a free data sample from \\ \href{https://www.quandl.com/data/EOD/INTC}{https://www.quandl.com/data/EOD/INTC}.
	\bibliographystyle{abbrvnat}
	\bibliography{lit,geostoch,abbrev} 

\newcommand{\noopsort}[1]{} \newcommand{\printfirst}[2]{#1}
  \newcommand{\singleletter}[1]{#1} \newcommand{\switchargs}[2]{#2#1}
\begin{thebibliography}{54}
\providecommand{\natexlab}[1]{#1}
\providecommand{\url}[1]{\texttt{#1}}
\expandafter\ifx\csname urlstyle\endcsname\relax
  \providecommand{\doi}[1]{doi: #1}\else
  \providecommand{\doi}{doi: \begingroup \urlstyle{rm}\Url}\fi

\bibitem[Asadi et~al.(2015)Asadi, Davison, and
  Engelke]{asadi-davison-engelke-18}
P.~Asadi, A.~C. Davison, and S.~Engelke.
\newblock Extremes on river networks.
\newblock \emph{Ann. Appl. Stat}, 9\penalty0 (4):\penalty0 2023--2050, 2015.

\bibitem[Avram and Taqqu(1986)]{Avram1986}
F.~Avram and M.~S. Taqqu.
\newblock Weak convergence of moving averages with infinite variance.
\newblock In \emph{Dependence in probability and statistics ({O}berwolfach,
  1985)}, volume~11 of \emph{Progr. Probab. Statist.}, pages 399--415.
  Birkh\"{a}user Boston, Boston, MA, 1986.

\bibitem[Beran et~al.(2012)Beran, Das, and Schell]{Beran2012}
J.~Beran, B.~Das, and D.~Schell.
\newblock On robust tail index estimation for linear long-memory processes.
\newblock \emph{J. Time Series Anal.}, 33\penalty0 (3):\penalty0 406--423,
  2012.

\bibitem[Beran et~al.(2013)Beran, Feng, Ghosh, and Kulik]{beran:kulik:2013}
J.~Beran, Y.~Feng, S.~Ghosh, and R.~Kulik.
\newblock \emph{Long memory processes. Probabilistic Properties and Statistical
  Methods}.
\newblock Springer, 2013.

\bibitem[Botcharova et~al.(2014)Botcharova, Farmer, and
  Berthouze]{botcharova2014markers}
M.~Botcharova, S.~F. Farmer, and L.~Berthouze.
\newblock Markers of criticality in phase synchronization.
\newblock \emph{Front. Syst. Neurosci.}, 8:\penalty0 176, 2014.

\bibitem[Buishand et~al.(2008)Buishand, De~Haan, Zhou,
  et~al.]{buishand-etal-08}
T.~Buishand, L.~De~Haan, C.~Zhou, et~al.
\newblock On spatial extremes: with application to a rainfall problem.
\newblock \emph{Ann. Appl. Stat}, 2\penalty0 (2):\penalty0 624--642, 2008.

\bibitem[Bulinski and Shashkin(2007)]{Bulinski_Shashkin}
A.~Bulinski and A.~Shashkin.
\newblock \emph{Limit Theorems for Associated Random Fields and Related
  Systems}.
\newblock World Scientific Publishing Co. Pte. Ltd., 2007.

\bibitem[Burnecki(2012)]{burnecki2012farima}
K.~Burnecki.
\newblock Farima processes with application to biophysical data.
\newblock \emph{J. Stat. Mech. Theory Exp}, 2012\penalty0 (05):\penalty0
  P05015, 2012.

\bibitem[Cheung and Lai(1995)]{cheung1995search}
Y.-W. Cheung and K.~S. Lai.
\newblock A search for long memory in international stock market returns.
\newblock \emph{J. Int. Money Finance}, 14\penalty0 (4):\penalty0 597--615,
  1995.

\bibitem[Coles(1993)]{coles93}
S.~G. Coles.
\newblock Regional modelling of extreme storms via max-stable processes.
\newblock \emph{J. R. Stat. Soc. Ser. B. Stat. Methodol.}, 55\penalty0
  (4):\penalty0 797--816, 1993.

\bibitem[Damarackas and Paulauskas(2014)]{Damarackas2014}
J.~Damarackas and V.~Paulauskas.
\newblock Properties of spectral covariance for linear processes with infinite
  variance.
\newblock \emph{Lith. Math. J.}, 54\penalty0 (3):\penalty0 252--276, 2014.

\bibitem[Damarackas and Paulauskas(2017)]{Damarackas2017}
J.~Damarackas and V.~Paulauskas.
\newblock Spectral covariance and limit theorems for random fields with
  infinite variance.
\newblock \emph{J. Multivariate Anal.}, 153:\penalty0 156--175, 2017.

\bibitem[Davison and Gholamrezaee(2012)]{davison-gholamrezaee-2012}
A.~C. Davison and M.~M. Gholamrezaee.
\newblock Geostatistics of extremes.
\newblock \emph{P. Roy. Soc. A-Math. Phy.}, 468\penalty0 (2138):\penalty0
  581--608, 2012.

\bibitem[Dehling and Philipp(2002)]{DehlPhil02}
H.~Dehling and W.~Philipp.
\newblock Empirical process techniques for dependent data.
\newblock In \emph{Empirical process techniques for dependent data}, pages
  3--113. Birkh\"{a}user Boston, Boston, MA, 2002.

\bibitem[Dombry and Kabluchko(2017)]{dombry-kabluchko-17}
C.~Dombry and Z.~Kabluchko.
\newblock Ergodic decompositions of stationary max-stable processes in terms of
  their spectral functions.
\newblock \emph{Stochastic Process. Appl.}, 127\penalty0 (6):\penalty0
  1763--1784, 2017.

\bibitem[Doukhan et~al.(2003)Doukhan, Oppenheim, and Taqqu]{Doukhan2003}
P.~Doukhan, G.~Oppenheim, and M.~S. Taqqu, editors.
\newblock \emph{Theory and applications of long-range dependence}.
\newblock Birkh\"{a}user Boston, Inc., Boston, MA, 2003.

\bibitem[Embrechts et~al.(1997)Embrechts, Kl{\"u}ppelberg, and Mikosch]{EKM97}
P.~Embrechts, C.~Kl{\"u}ppelberg, and T.~Mikosch.
\newblock \emph{Modelling Extremal Events for Insurance and Finance}.
\newblock Springer-Verlag, Berlin Heidelberg, 1997.

\bibitem[Giraitis et~al.(2012)Giraitis, Koul, and
  Surgailis]{GiraitisKoulSurg12}
L.~Giraitis, H.~L. Koul, and D.~Surgailis.
\newblock \emph{Large sample inference for long memory processes}.
\newblock Imperial College Press, London, 2012.

\bibitem[Heyde and Yang(1997)]{HeydeYang97}
C.~C. Heyde and Y.~Yang.
\newblock On defining long-range dependence.
\newblock \emph{J. Appl. Probab.}, 34\penalty0 (4):\penalty0 939--944, 1997.

\bibitem[Hosoya(1978)]{Char_Fct_Hosoya}
Y.~Hosoya.
\newblock Discrete-time stable processes and their certain properties.
\newblock \emph{Ann. Probab.}, 6\penalty0 (1):\penalty0 94--105, 02 1978.

\bibitem[Hurst(1951)]{hurst51}
H.~E. Hurst.
\newblock Long-term storage capacity of reservoirs.
\newblock \emph{Trans. Amer. Soc. Civil Eng.}, 116:\penalty0 770--799, 1951.

\bibitem[Jach et~al.(2012)Jach, McElroy, and Politis]{Jach2012}
A.~Jach, T.~McElroy, and D.~N. Politis.
\newblock Subsampling inference for the mean of heavy-tailed long-memory time
  series.
\newblock \emph{J. Time Series Anal.}, 33\penalty0 (1):\penalty0 96--111, 2012.

\bibitem[Kabluchko and Schlather(2010)]{kabluchko-schlather-10}
Z.~Kabluchko and M.~Schlather.
\newblock Ergodic properties of max-infinitely divisible processes.
\newblock \emph{Stochastic Process. Appl.}, 120\penalty0 (3):\penalty0
  281--295, 2010.

\bibitem[Kabluchko et~al.(2009)Kabluchko, Schlather, and de~Haan]{KSH09}
Z.~Kabluchko, M.~Schlather, and L.~de~Haan.
\newblock Stationary max-stable fields associated to negative definite
  functions.
\newblock \emph{Ann. Probab.}, 37\penalty0 (5):\penalty0 2042--2065, 2009.

\bibitem[Kampf et~al.(2020)Kampf, Shevchenko, and Spodarev]{Kampf2020}
J.~Kampf, G.~Shevchenko, and E.~Spodarev.
\newblock Nonparametric estimation of the kernel function of symmetric stable
  moving average random functions.
\newblock \emph{Ann. Inst. Statist. Math.}, to appear, 2020.
\newblock \doi{10.1007/510463-020-00751-6}.

\bibitem[Kasahara et~al.(1988)Kasahara, Maejima, and Vervaat]{Kasahara1988}
Y.~Kasahara, M.~Maejima, and W.~Vervaat.
\newblock Log-fractional stable processes.
\newblock \emph{Stochastic Process. Appl.}, 30\penalty0 (2):\penalty0 329--339,
  1988.

\bibitem[Kokoszka and Mikosch(1997)]{Kokoszka1997}
P.~Kokoszka and T.~Mikosch.
\newblock The integrated periodogram for long-memory processes with finite or
  infinite variance.
\newblock \emph{Stochastic Process. Appl.}, 66\penalty0 (1):\penalty0 55--78,
  1997.

\bibitem[Kokoszka and Taqqu(1996)]{Kokoszka1996}
P.~S. Kokoszka and M.~S. Taqqu.
\newblock Parameter estimation for infinite variance fractional {ARIMA}.
\newblock \emph{Ann. Statist.}, 24\penalty0 (5):\penalty0 1880--1913, 1996.

\bibitem[Koul and Surgailis(2018)]{Koul2018}
H.~L. Koul and D.~Surgailis.
\newblock Asymptotic distributions of some scale estimators in nonlinear models
  with long memory errors having infinite variance.
\newblock \emph{J. Time Series Anal.}, 39\penalty0 (3):\penalty0 273--298,
  2018.

\bibitem[{Kulik} and {Spodarev}(2019)]{Kulik_Spodarev_LRD}
R.~{Kulik} and E.~{Spodarev}.
\newblock {Long range dependence of heavy tailed random functions}.
\newblock \emph{arXiv e-prints}, page arXiv:1706.00742v3, 2019.

\bibitem[Lavancier(2006)]{Lavancier06}
F.~Lavancier.
\newblock Long memory random fields.
\newblock In \emph{Dependence in probability and statistics}, volume 187 of
  \emph{Lecture Notes in Statist.}, pages 195--220. Springer, New York, 2006.

\bibitem[Maejima and Yamamoto(2003)]{Maejima2003}
M.~Maejima and K.~Yamamoto.
\newblock Long-memory stable {O}rnstein-{U}hlenbeck processes.
\newblock \emph{Electron. J. Probab.}, 8\penalty0 (19):\penalty0 1--18, 2003.

\bibitem[Magdziarz and Weron(2007)]{Magdziarz2007}
M.~Magdziarz and A.~Weron.
\newblock Fractional {L}angevin equation with {$\alpha$}-stable noise. {A} link
  to fractional {ARIMA} time series.
\newblock \emph{Studia Math.}, 181\penalty0 (1):\penalty0 47--60, 2007.

\bibitem[McCulloch(1986)]{McCulloch86}
J.~H. McCulloch.
\newblock Simple consistent estimators of stable distribution parameters.
\newblock \emph{Commun. Stat-Simul C}, 15\penalty0 (4):\penalty0 1109--1136,
  1986.

\bibitem[Montillet and Yu(2015)]{Montillet2015}
J.-P. Montillet and K.~Yu.
\newblock Modeling geodetic processes with {L}evy {$\alpha$}-stable
  distribution and {FARIMA}.
\newblock \emph{Math. Geosci.}, 47\penalty0 (6):\penalty0 627--646, 2015.

\bibitem[Oesting et~al.(2017)Oesting, Schlather, and
  Friederichs]{oesting-friederichs-schlather-2017}
M.~Oesting, M.~Schlather, and P.~Friederichs.
\newblock Statistical post-processing of forecasts for extremes using bivariate
  {B}rown--{R}esnick processes with an application to wind gusts.
\newblock \emph{Extremes}, 20\penalty0 (2):\penalty0 309--332, 2017.

\bibitem[Panas(2001)]{panas2001estimating}
E.~Panas.
\newblock Estimating fractal dimension using stable distributions and exploring
  long memory through arfima models in athens stock exchange.
\newblock \emph{Appl Financ Econ}, 11\penalty0 (4):\penalty0 395--402, 2001.

\bibitem[Paulauskas(2016)]{Paul16}
V.~Paulauskas.
\newblock Some remarks on definitions of memory for stationary random processes
  and fields.
\newblock \emph{Lith. Math. J.}, 56\penalty0 (2):\penalty0 229--250, 2016.

\bibitem[Paulauskas(1976)]{Paulauskas1976}
V.~J. Paulauskas.
\newblock Some remarks on multivariate stable distributions.
\newblock \emph{J. Multivariate Anal.}, 6\penalty0 (3):\penalty0 356--368,
  1976.

\bibitem[Pilipauskait\.{e} and Surgailis(2016)]{Pilip2016}
V.~Pilipauskait\.{e} and D.~Surgailis.
\newblock Anisotropic scaling of the random grain model with application to
  network traffic.
\newblock \emph{J. Appl. Probab.}, 53\penalty0 (3):\penalty0 857--879, 2016.

\bibitem[Rachev and Samorodnitsky(2002)]{Rachev2002}
S.~T. Rachev and G.~Samorodnitsky.
\newblock Erratum to: ``{L}ong strange segments in a long-range-dependent
  moving average'' [{S}tochastic {P}rocess. {A}ppl. {\bf 93} (2001), no. 1,
  119--148;].
\newblock \emph{Stochastic Process. Appl.}, 101\penalty0 (1):\penalty0 161,
  2002.

\bibitem[Reed and Simon(1981)]{Simon_Reed}
M.~Reed and B.~Simon.
\newblock \emph{Methods of Modern Mathematical Physics, Volume 1, Functional
  Analysis.}
\newblock Academic Press, 1981.

\bibitem[Resnick(2007)]{resnick-2007}
S.~I. Resnick.
\newblock \emph{Heavy-Tail Phenomena: Probabilistic and Statistical Modeling}.
\newblock Springer, New York, 2007.

\bibitem[Resnick(2013)]{resnick-87}
S.~I. Resnick.
\newblock \emph{Extreme Values, Regular Variation and Point Processes}.
\newblock Springer, 2013.

\bibitem[Samorodnitsky(2004)]{Samorod04}
G.~Samorodnitsky.
\newblock Extreme value theory, ergodic theory and the boundary between short
  memory and long memory for stationary stable processes.
\newblock \emph{Ann. Probab.}, 32\penalty0 (2):\penalty0 1438--1468, 2004.

\bibitem[Samorodnitsky(2016)]{Samorod16}
G.~Samorodnitsky.
\newblock \emph{Stochastic processes and long range dependence}.
\newblock Springer Series in Operations Research and Financial Engineering.
  Springer, Cham, 2016.

\bibitem[Samorodnitsky and Taqqu(1994)]{SamorodTaqqu94}
G.~Samorodnitsky and M.~S. Taqqu.
\newblock \emph{Stable non-{G}aussian random processes}.
\newblock Stochastic Modeling. Chapman \& Hall, New York, 1994.
\newblock Stochastic models with infinite variance.

\bibitem[Schilling(2017)]{Schilling_measure_theory}
R.~L. Schilling.
\newblock \emph{Measures, Integrals and Martingales}.
\newblock Cambridge University Press, second edition, 2017.

\bibitem[Schlather(2002)]{schlather02}
M.~Schlather.
\newblock Models for stationary max-stable random fields.
\newblock \emph{Extremes}, 5\penalty0 (1):\penalty0 33--44, 2002.

\bibitem[Schlather and Tawn(2003)]{schlather-tawn-03}
M.~Schlather and J.~A. Tawn.
\newblock A dependence measure for multivariate and spatial extreme values:
  Properties and inference.
\newblock \emph{Biometrika}, 90\penalty0 (1):\penalty0 139--156, 2003.

\bibitem[Stoev(2008)]{stoev-08}
S.~A. Stoev.
\newblock On the ergodicity and mixing of max-stable processes.
\newblock \emph{Stochastic Process. Appl.}, 118\penalty0 (9):\penalty0
  1679--1705, 2008.

\bibitem[Strokorb and Schlather(2015)]{strokorb-schlather-15}
K.~Strokorb and M.~Schlather.
\newblock An exceptional max-stable process fully parameterized by its extremal
  coefficients.
\newblock \emph{Bernoulli}, 21\penalty0 (1):\penalty0 276--302, 2015.

\bibitem[Wang and Stoev(2010)]{wang-stoev-10}
Y.~Wang and S.~Stoev.
\newblock On the structure and representations of max-stable processes.
\newblock \emph{Adv. Appl. Probab.}, 42\penalty0 (3):\penalty0 855--877, 2010.

\bibitem[Zhang and Smith(2010)]{zhang-smith-2010}
Z.~Zhang and R.~L. Smith.
\newblock On the estimation and application of max-stable processes.
\newblock \emph{J. Statist. Plann. Inference}, 140\penalty0 (5):\penalty0
  1135--1153, 2010.

\end{thebibliography}
	
	\appendix

	%%%%%%%%%%%%%%%%%%%% APPENDIX  %%%%%%%%%%%%%%%%%%%%%%%%%%%%%%%%%%%%%%%%%%%%%%%%%%%%%%%%%%
	%%%%%%%%%%%%%%%%%%%%%%%%%%%%%%%%%%%%%%%%%%%%%%%%%%%%%%%%%%%%%%%%%%%%%%%%%%%%%%%%%%%%%%%%%
	%%%%%%%%%%%%%%%%%%%%%%%%%%%%%%%%%%%%%%%%%%%%%%%%%%%%%%%%%%%%%%%%%%%%%%%%%%%%%%%%%%%%%%%%%
	
	\section{Appendix}
	\label{sec: appendix}

	\begin{Lemma}[]
		\label{lem: Schilling_lemma}
		Suppose $U$ and $V$ are identically distributed random variables with marginal characteristic function $\CFU{}$ and joint characteristic function  $\CFV{}{}$. Also, let $U^{\prime}$ and $V^{\prime}$ be independent copies of $U$ and $V$, respectively. Then, for a Fourier transform of a finite measure $\mu$ denoted by $\psi: \mathbb{R} \rightarrow \mathbb{C}, \psi(s) = \int_{\mathbb{R}} \exp \{ isx \}\, \mu(dx)$ it holds that
		\begin{align*}
			\mathbb{E} \int_{\mathbb{R}} \int_{\mathbb{R}} \widehat{f}_a(s_1, s_2) \overline{\psi(s_1)\psi(s_2)}\  ds_1\, ds_2 =
			 \int_{\mathbb{R}} \int_{\mathbb{R}} \mathbb{E} \widehat{f}_a(s_1, s_2) \overline{\psi(s_1)\psi(s_2)}\  ds_1\, ds_2,
		\end{align*}
		where $\widehat{f}_a(s_1, s_2) = \int_{\RR} \int_{\mathbb{R}} e^{i(s_1u+s_2v)} f_a(u,v)\ du\, dv$, $a > 0$ and 
		\begin{align*}
		f_a(u,v) = \mathds{1} \big\{ U > u > -a, V > v> -a \big\} - \mathds{1} \big\{ U^{\prime} > u > -a, V^{\prime} > v > -a \big\}.
		\end{align*}
		
		\begin{proof}
		Let us define $g_a(s_1, s_2) = \widehat{f}_a(s_1, s_2) \mathds{1} \{ s_1, s_2 \in (-r, r) \}$ for $r > 0$ and denote 
			\begin{align*}
				a_r = \int_{\RR} \int_{\RR} g_a(s_1, s_2)  \overline{\psi(s_1)\psi(s_2)} \ ds_1\, ds_2 = \int_{\RR} \int_{\RR} \widehat{g}_a(s_1, s_2)  \ \mu(s_1)\, \mu(s_2),
			\end{align*} 
			where we have used \cite[Theorem 19.12]{Schilling_measure_theory} in the last equality and $\widehat{g}_a$ is the inverse Fourier transform of $g_a$. It can be computed by tedious, yet simple calculations as
			\begin{align}
				\widehat{g}_a(s_1, s_2) % &= \int_{-r}^{r}  \int_{-r}^{r} e^{-i(s_1x + s_2y)} \bigg( \frac{1}{ix} \Big( e^{ixU} - e^{-ixa} \Big) \bigg) \bigg( \frac{1}{ix} \Big( e^{iyV} - e^{-iya} \Big) \bigg)\ dx \, dy \notag \\
%				&\quad -
%				\int_{-r}^{r}  \int_{-r}^{r} e^{-i(s_1x + s_2y)} \bigg( \frac{1}{ix} \Big( e^{ixU^{\prime}} - e^{-ixa} \Big) \bigg) \bigg( \frac{1}{ix} \Big( e^{iyV^{\prime}} - e^{-iya} \Big) \bigg)\ dx \, dy 
				&= \bigg( \int_{-r}^{r} \frac{e^{-is_1x
				}}{ix} \Big( e^{ixU} - e^{-ixa} \Big) \ dx \bigg) \bigg( \int_{-r}^{r} \frac{e^{-is_2y
			}}{iy} \Big( e^{iyV} - e^{-iya} \Big) \ dy \bigg) \notag \\
		&\quad -
		\bigg( \int_{-r}^{r} \frac{e^{-is_1x
		}}{ix} \Big( e^{ixU^{\prime}} - e^{-ixa} \Big) \ dx \bigg) \bigg( \int_{-r}^{r} \frac{e^{-is_2y
		}}{iy} \Big( e^{iyV^{\prime}} - e^{-iya} \Big) \ dy \bigg).
\label{eq: g_Schillingbeweis}
			\end{align}	
			Computing one of these factors gives us
			\begin{align*}
				 \int_{-r}^{r} \frac{e^{-is_1x
				}}{ix} \Big( e^{ixU} - e^{-ixa} \Big) \ dx &= \int_{-r}^{r} \int_{-a}^{U} e^{-is_1x} e^{ixt}\ dt\, dx = \int_{-a}^{U} \int_{-r}^{r}e^{ix(t-s_1)} \ dx\, dt\\
				&=
				2 \int_{r(U - s_1)}^{r(a-s_1)} \frac{1}{t} \, \frac{e^{it} - e^{-it}}{2i}\ dt = 2 \int_{r(U - s_1)}^{r(a-s_1)} \frac{\sin t}{t}\ dt < C
			\end{align*}
			for some finite constant $C > 0$ which is independent of $U$, $V$ and $r$. This constant exists because the function $ \text{Si}(x) = \int_{0}^{x} \frac{\sin t}{t}\ dt$ is bounded. Thus,  $\vert \widehat{g}_a(s_1, s_2) \vert \leq 2 C^2$ for all $s_1, s_2 \in \mathbb{R}$ by equality \eqref{eq: g_Schillingbeweis} and $\vert a_r \vert \leq 2 C^2 \mu^2(\mathbb{R})$. Therefore, we can apply the dominated convergence theorem and $\mathbb{E}[\lim_{r \to \infty} a_r] = \lim_{r \to \infty} \mathbb{E}[a_r]$. 
		\end{proof}
	\end{Lemma}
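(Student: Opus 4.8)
The plan is to justify interchanging $\mathbb{E}$ with the double Lebesgue integral over $\RR^2$ by a frequency truncation combined with a Parseval-type identity. The obstacle to applying Fubini--Tonelli directly is that $\widehat{f}_a$ is not absolutely integrable against $\overline{\psi(s_1)\psi(s_2)}$: writing out the transform, $\widehat{f}_a$ factorizes into terms $\frac{e^{isU}-e^{-isa}}{is}$ that decay only like $1/\vert s\vert$, so there is no reason for $\mathbb{E}\int_{\RR}\int_{\RR}\vert\widehat{f}_a(s_1,s_2)\vert\,\vert\psi(s_1)\psi(s_2)\vert\,\dd s_1\,\dd s_2$ to be finite. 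A limiting argument furnishing a dominating bound that is uniform both in the truncation level and in the realization of $(U,V,U',V')$ is therefore needed.

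First I would truncate the frequency domain: for $r>0$ set $g_a(s_1,s_2)=\widehat{f}_a(s_1,s_2)\,\mathds{1}\{s_1,s_2\in(-r,r)\}$ and $a_r=\int_{\RR}\int_{\RR}g_a(s_1,s_2)\,\overline{\psi(s_1)\psi(s_2)}\,\dd s_1\,\dd s_2$. For each fixed $r$ and each realization, $g_a$ is continuous (the apparent $1/(s_1 s_2)$ singularities are removable, since the relevant numerators vanish at $s_i=0$) and compactly supported, so ordinary Fubini applies on the bounded domain $(-r,r)^2$ and gives $\mathbb{E}a_r=\int_{(-r,r)^2}\mathbb{E}\widehat{f}_a(s_1,s_2)\,\overline{\psi(s_1)\psi(s_2)}\,\dd s_1\,\dd s_2$. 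Since each side of the claimed identity is the limit as $r\to\infty$ of its truncation, it remains only to exchange $\mathbb{E}$ with $\lim_{r\to\infty}$.

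The key step, and the main obstacle, is to obtain a deterministic dominating bound for $\vert a_r\vert$. Here I would invoke the multiplication/Parseval formula \cite[Theorem 19.12]{Schilling_measure_theory} to rewrite $a_r=\int_{\RR}\int_{\RR}\widehat{g}_a(s_1,s_2)\,\mu(\dd s_1)\,\mu(\dd s_2)$, moving the Lebesgue integral onto the finite measure $\mu$, where $\widehat{g}_a$ denotes the inverse transform of $g_a$. A direct computation shows that $\widehat{g}_a$ factorizes into terms $\int_{-r}^{r}\frac{e^{-is_1 x}}{ix}\big(e^{ixU}-e^{-ixa}\big)\,\dd x=2\big(\text{Si}(r(a-s_1))-\text{Si}(r(U-s_1))\big)$, with $\text{Si}(x)=\int_0^x\frac{\sin t}{t}\,\dd t$ globally bounded. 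Hence each factor is bounded by a finite constant $C$ independent of $U$, $V$ and $r$, so $\vert\widehat{g}_a\vert\le 2C^2$ and $\vert a_r\vert\le 2C^2\mu(\RR)^2$ uniformly in $r$ and in the realization. With this uniform bound in hand, dominated convergence yields $\mathbb{E}[\lim_{r\to\infty}a_r]=\lim_{r\to\infty}\mathbb{E}[a_r]$, which together with the truncated Fubini identity established above proves the claim.
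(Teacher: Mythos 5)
Your proposal follows essentially the same route as the paper's own proof: the same frequency truncation $g_a = \widehat{f}_a\,\mathds{1}\{s_1,s_2\in(-r,r)\}$, the same appeal to the multiplication formula of \cite[Theorem 19.12]{Schilling_measure_theory} to shift the integral onto the finite measure $\mu$, the same explicit computation of $\widehat{g}_a$ in terms of the bounded sine integral $\mathrm{Si}$ yielding a bound uniform in $r$ and in the realization, and the same concluding application of dominated convergence. If anything, you are slightly more complete, since you spell out the finite-$r$ Fubini step on $(-r,r)^2$ that the paper leaves implicit.
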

	
	\begin{Lemma}[]
		\label{lem: inequalities_bundled}
		
		\begin{enumerate}[(a)]
			\item \label{lem: Vitalii inequality}
			Suppose $a, b \in \mathbb{R}$ and $\alpha \in (0,2)$. Then it holds that
			\begin{align*}
			\Big\vert \vert a  \vert^{\alpha} + \vert b \vert^{\alpha} - \vert a-b \vert^{\alpha}  \Big\vert \leq 2 \vert a \vert ^{\alpha/2} \vert b \vert ^{\alpha/2}.
			\end{align*}

			\item \label{lem: Ungleichen groesser, kleiner null}
			Suppose $a,b \geq 0$ and $\alpha \geq 0$, then $ \vert a - b \vert^\alpha \leq a^\alpha + b^\alpha$.

			%\item \label{lem: inequalites LRD1} For any $a,b \geq 0$ and $\alpha \geq 1$ we have
			%\begin{enumerate}[(i)]
			%	\item \label{c i}	$a^\alpha + b^\alpha -  (a + b) ^\alpha \leq 0$ ,
			%	\item \label{c ii} $\vert a-b\vert^{\alpha} \leq \vert a ^{\alpha} - b ^{\alpha} \vert$,
			   % \item \label{c iii}\textcolor{Vitalii}{ 	$2^{\alpha-1} (a^\alpha + b^\alpha) \geq   (a + b) ^\alpha $.}
				
				%\item 
				%\( \begin{aligned}
				%\int_{0}^{1} \int_{0}^{1} \frac{  (as_1)^{\alpha} \wedge (bs_2)^{\alpha} }{s_1s_2}\, \dd s_1\, ds_2 \geq \frac{ a^{\alpha} \wedge b^{\alpha}}{\alpha^2}.
				%\end{aligned} \)\\
				%\end{enumerate}
			%\item \label{lem: inequalites LRD2}
		    %\textcolor{Vitalii}{For any $a,b \geq 0$ and $\alpha \in (0,1]$ we have
			%	$a^\alpha + b^\alpha \geq   (a + b) ^\alpha .$ }
			\item \label{lem: inequalites LRD2} 
			Let $\alpha>0,a\geq 0,b\geq 0,$ then
			\begin{equation}
			\label{V:inequalites LRD} \int_0^1\int_0^1 \frac{(a s_1+b s_2)^{\alpha}-|a s_1-b s_2|^{\alpha}}{s_1 s_2}\dd s_1 \dd s_2 \geq  C_\alpha \left(a^{\alpha}\wedge b^{\alpha}\right),
			\end{equation}
			where $C_\alpha=\frac{2}{\alpha}\int_0^1 \frac{(1+u)^{\alpha}-(1-u)^{\alpha}}{u}\dd u\geq \frac{4(2^\alpha-1)}{\alpha(\alpha+1)}.$
		\end{enumerate}
	\end{Lemma}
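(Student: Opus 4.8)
For part (a) I would first reduce the two-sided bound to the subadditivity of a single power function. Writing $\beta = \alpha/2 \in (0,1)$, the upper estimate $|a|^\alpha + |b|^\alpha - |a-b|^\alpha \leq 2|a|^{\alpha/2}|b|^{\alpha/2}$ rearranges, by completing the square and using $|a|^{\alpha/2}|b|^{\alpha/2} = |a|^\beta |b|^\beta$, into $\bigl(|a|^\beta - |b|^\beta\bigr)^2 \leq |a-b|^\alpha$, i.e.\ $\bigl||a|^\beta - |b|^\beta\bigr| \leq |a-b|^\beta$, while the lower estimate rearranges into $|a-b|^\beta \leq |a|^\beta + |b|^\beta$. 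Both reduced statements follow from the fact that $t \mapsto t^\beta$ is subadditive on $[0,\infty)$ for $\beta \in (0,1)$: for $s \geq t \geq 0$ the identity $s = (s-t)+t$ gives $s^\beta \leq (s-t)^\beta + t^\beta$, hence $|s^\beta - t^\beta| \leq |s-t|^\beta$, which combined with the reverse triangle inequality $\bigl||a|-|b|\bigr| \leq |a-b|$ yields the first, and subadditivity applied to $|a-b| \leq |a|+|b|$ yields the second. Part (b) I would dispatch at once: assuming $a \geq b \geq 0$ without loss of generality, $0 \leq a-b \leq a$ forces $(a-b)^\alpha \leq a^\alpha \leq a^\alpha + b^\alpha$.

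The substance of the lemma is part (c), and here the plan is to turn the given double integral into one that can be evaluated in closed form by a scaling argument. The integrand is invariant under the swap $(a,s_1)\leftrightarrow(b,s_2)$ and the right-hand side is symmetric in $a,b$, so I would assume $a \geq b \geq 0$ (the case $b=0$ being trivial, since the right-hand side vanishes and the integrand is nonnegative because $as_1+bs_2 \geq |as_1-bs_2|$). Substituting $x=as_1$, $y=bs_2$ turns the measure into $\frac{\dd s_1\,\dd s_2}{s_1 s_2}=\frac{\dd x\,\dd y}{xy}$ and the domain into $[0,a]\times[0,b]$, so the integral becomes $\int_0^a\int_0^b \frac{(x+y)^\alpha-|x-y|^\alpha}{xy}\,\dd x\,\dd y$. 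Crucially, since the integrand is nonnegative and $b\le a$, I would then discard the slab $x>b$ and keep only the square $[0,b]^2$; a final rescaling $x=b\xi$, $y=b\eta$ extracts the factor $b^\alpha=a^\alpha\wedge b^\alpha$ and leaves the universal integral $\int_0^1\int_0^1 \frac{(\xi+\eta)^\alpha-|\xi-\eta|^\alpha}{\xi\eta}\,\dd\xi\,\dd\eta$.

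It then remains to identify this integral with $C_\alpha$ and to prove the explicit lower bound. By symmetry in $\xi,\eta$ the integral equals twice its value on $\{0<\eta<\xi<1\}$, where $|\xi-\eta|=\xi-\eta$; for fixed $\xi$ the inner substitution $\eta=\xi u$ factorizes everything, since $\frac{(\xi+\eta)^\alpha-(\xi-\eta)^\alpha}{\xi\eta}\,\dd\eta=\xi^{\alpha-1}\,\frac{(1+u)^\alpha-(1-u)^\alpha}{u}\,\dd u$, and $\int_0^1\xi^{\alpha-1}\,\dd\xi=1/\alpha$. The factor $2$ from symmetry and this $1/\alpha$ combine to give exactly $C_\alpha=\frac{2}{\alpha}\int_0^1\frac{(1+u)^\alpha-(1-u)^\alpha}{u}\,\dd u$, so the lower bound $\ge C_\alpha(a^\alpha\wedge b^\alpha)$ follows. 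For the closed-form estimate I would use the crude bound $1/u\ge 1$ on $(0,1)$ together with the nonnegativity of the numerator: $\int_0^1\frac{(1+u)^\alpha-(1-u)^\alpha}{u}\,\dd u\ge\int_0^1\bigl[(1+u)^\alpha-(1-u)^\alpha\bigr]\,\dd u=\frac{2(2^\alpha-1)}{\alpha+1}$, whence $C_\alpha\ge\frac{4(2^\alpha-1)}{\alpha(\alpha+1)}$.

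The elementary inequalities (a) and (b) and the change-of-variables bookkeeping I expect to be routine. The one point that genuinely has to be got right is the order of operations in (c): nonnegativity of the integrand must be used to shrink the rectangle $[0,a]\times[0,b]$ to the square $[0,b]^2$ \emph{before} rescaling, because it is exactly this step that converts the dependence on $a$ into the minimum $a^\alpha\wedge b^\alpha$. A minor accompanying task is to check that $C_\alpha$ is finite and positive, so that all Fubini interchanges are legitimate; this follows from $(1+u)^\alpha-(1-u)^\alpha\sim 2\alpha u$ as $u\to 0$ and the integrability of $\xi^{\alpha-1}$ on $(0,1)$ for $\alpha>0$.
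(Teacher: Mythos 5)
Your proposal is correct in all three parts, and in part (c) --- the substantive one --- it is essentially the paper's own argument: the same change of variables $x=as_1$, $y=bs_2$, the same crucial step of using nonnegativity of the integrand to shrink the rectangle $[0,a]\times[0,b]$ to the square on the side $a\wedge b$ \emph{before} scaling, and the same symmetry-plus-$\eta=\xi u$ substitution producing exactly $C_\alpha\,(a^\alpha\wedge b^\alpha)$; part (b) is also identical. Where you genuinely diverge is part (a). The paper's proof is probabilistic: it recognizes $\tfrac12\bigl(|a|^\alpha+|b|^\alpha-|a-b|^\alpha\bigr)$ as the covariance $\EE[B^H_a B^H_b]$ of a fractional Brownian motion with Hurst index $H=\alpha/2\in(0,1)$ and applies the Cauchy--Schwarz inequality, which yields both signs of the bound in one stroke. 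You instead split the two-sided bound into the pair $\bigl||a|^{\alpha/2}-|b|^{\alpha/2}\bigr|\le|a-b|^{\alpha/2}$ and $|a-b|^{\alpha/2}\le|a|^{\alpha/2}+|b|^{\alpha/2}$ and deduce both from subadditivity of $t\mapsto t^{\alpha/2}$ on $[0,\infty)$; this is more elementary and self-contained (it imports only a standard analytic fact rather than the existence of fBm), at the cost of a case split and slightly more algebra. A small bonus of your write-up: you actually verify the stated numerical bound $C_\alpha\ge\frac{4(2^\alpha-1)}{\alpha(\alpha+1)}$ via $1/u\ge1$ and $\int_0^1\bigl[(1+u)^\alpha-(1-u)^\alpha\bigr]\,\dd u=\frac{2(2^\alpha-1)}{\alpha+1}$, whereas the paper states this inequality without proof.
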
		
	\begin{proof}
		\begin{enumerate}[(a)]
			\item Recall that the covariance function of a fractional Brownian motion $B^{H}=\{ B_t^{H}, t \in \mathbb{R} \}$ with Hurst index $H \in (0,1)$ equals
			\begin{align}
			\mathbb{E}[B_t^{H}B_s^{H}] = \frac{1}{2}( \vert t \vert ^{2H} + \vert s \vert ^{2H} - \vert t-s \vert ^{2H}), \label{eq: covariance FBM}
			\end{align} 
			where $s,t \in \mathbb{R}$. Using the Cauchy-Schwarz inequality we have that
			\begin{align}
			\vert \mathbb{E}[B_t^{H}B_s^{H}] \vert^2 \leq \mathbb{E}[(B_t^{H})^2]\mathbb{E}[(B_s^{H})^2] = \vert t \vert ^{2H}  \vert s \vert ^{2H}. \label{eq: FBM Cauchy-Schwarz}
			\end{align}
			Denoting $\alpha=2H$ and combining (\ref{eq: covariance FBM}) and (\ref{eq: FBM Cauchy-Schwarz}) we get
			$
			\bigg\vert\frac{1}{2}( \vert t  \vert^{\alpha} + \vert s \vert^{\alpha} - \vert t-s \vert^{\alpha})  \bigg\vert^2 \leq   \vert t \vert ^{\alpha}  \vert s \vert ^{\alpha},
			$
			or, equivalently,
			$\bigg\vert \vert t  \vert^{\alpha} + \vert s \vert^{\alpha} - \vert t-s \vert^{\alpha}  \bigg\vert \leq  2 \vert t \vert ^{\alpha/2}  \vert s \vert ^{\alpha/2}.
			$
			
			\item Suppose $a < b$, then 
			\begin{align*}
			a^\alpha + b^\alpha - \vert a - b \vert^\alpha = a^\alpha + b^\alpha - ( b - a)^\alpha \geq a^\alpha + b^\alpha - b^\alpha = a^\alpha \geq 0.
			\end{align*}
			The claim follows analogously if $a \geq b$.

		\item For simplicity, consider the case $0<a<b.$ After the change of variable, the integral in \eqref{V:inequalites LRD} rewrites
		\begin{align}
		\nonumber&\int_0^a\int_0^b \frac{(z_2+z_1)^{\alpha}-|z_1-z_2|^{\alpha}}{z_1 z_2}\dd z_1 \dd z_2=2\iint_{0\leq z_1\leq z_2\leq a}\frac{(z_2+z_1)^{\alpha}-(z_2-z_1)^{\alpha}}{z_1 z_2}\dd z_1 \dd z_2\\
		\nonumber&+\int_0^a \int_a^b\frac{(z_2+z_1)^{\alpha}-(z_2-z_1)^{\alpha}}{z_1 z_2}\dd z_2 \dd z_1\geq 
		2\int_0^a \left[\int_0^{z_2} \frac{(z_2+z_1)^{\alpha}-(z_2-z_1)^{\alpha}}{z_1}\dd z_1\right] \frac{\dd z_2}{ z_2}\\
		&=2\int_0^a z_2^{\alpha-1} \dd z_2\left[\int_0^1 \frac{(1+u)^{\alpha}-(1-u)^{\alpha}}{u}\dd u\right]=C_\alpha a^\alpha.
		%\label{V:LRD:ineq}&+\int_0^a\left[ \int_a^b\frac{(z_2+z_1)^{\alpha}-(z_2-z_1)^{\alpha}}{z_2^{\alpha} }z_2^{\alpha-1}\dd z_2\right] \frac{\dd z_1}{z_1}.
		\end{align}
	%\textcolor{Vitalii}{
	%	The integrand in the right-hand side of \eqref{V:LRD:ineq} of the form $\frac{(x+c)^\alpha-(x-c)^\alpha}{x^{\alpha}}$ is monotonically decreasing function on $[c,+\infty),$ where $c>0.$ Therefore, the sum of integrals in the right-hand side of \eqref{V:LRD:ineq} is bounded from below by
	%	\begin{align*}
	%	  &2\int_0^a \left[\int_0^{z_2} \frac{(z_2+z_1)^{\alpha}-(z_2-z_1)^{\alpha}}{z_1}\dd z_1\right] \frac{\dd z_2}{ z_2}+\int_0^a\left[ \int_a^b\frac{(a+z_1)^{\alpha}-(a-z_1)^{\alpha}}{a^{\alpha} }z_2^{\alpha-1}\dd z_2\right] \frac{\dd z_1}{z_1}\\
	%	  &=2\int_0^a z_2^{\alpha-1} \dd z_2\left[\int_0^1 \frac{(1+u)^{\alpha}-(1-u)^{\alpha}}{u}\dd u\right] +\int_a^b z_2^{\alpha-1}\dd z_2\left[ \int_0^a\frac{(a+z_1)^{\alpha}-(a-z_1)^{\alpha}}{a^{\alpha}z_1 }\dd z_1\right]\\
	%	  &=2\frac{C_{\alpha}}{\alpha}a^\alpha+\frac{C_\alpha}{\alpha}(b^\alpha-a^{\alpha})\textcolor{Albert}{\geq}\frac{C_\alpha}{\alpha}\left(2a^{\frac{\alpha}{2}}b^{\frac{\alpha}{2}}+\left(b^{\frac{\alpha}{2}}-a^{\frac{\alpha}{2}}\right)^2\right)\geq \frac{2 C_\alpha}{\alpha}a^{\frac{\alpha}{2}}b^{\frac{\alpha}{2}}.
	%	\end{align*}
	%	}
		\end{enumerate}	
	\end{proof}

	\begin{Lemma}
		\label{lemma: ineq}
		Let $X = \{ X(t), \ t \in \mathbb{R} \}$ be a S$\alpha$S moving average process with parameter $\alpha \in (0, 2)$, nonnegative kernel function $m \in \lp{\alpha}{\mathbb{R}}$, $m(x) >0$ on a set of positive Lebesgue measure and $\alpha-$spectral covariance $\rho_t.$ Let $\varphi$ and $\varphi_t$, $t \in \mathbb{R}$, be the characteristic functions given in equations \eqref{eq: char_fct_univ} and \eqref{eq: char_fct_biv}. Then, 
		\begin{align}
		I_1=\int_{\RR_+} \int_{\RR_+} \frac{\vert\CFB{+}{-} - \CFA{s_1}\CFA{s_2}\vert}{s_1s_2} \, \dd s_1 \, \dd s_2 &\leq \frac{8\pi}{\alpha^2 \Vert m \Vert^{2\alpha}_{\alpha}}  \frac{\rho_t}{\sqrt{\Vert m \Vert^{2\alpha}_{\alpha}-\rho_t^2}} , \label{eq: I_1 finite} \\
		I_2=\int_{\RR_+} \int_{\RR_+} \frac{\vert\CFB{+}{+} - \CFA{s_1}\CFA{s_2}\vert}{s_1s_2} \, \dd s_1 \, \dd s_2&\leq
		\frac{8\pi}{\alpha^2 \Vert m \Vert^{2\alpha}_{\alpha}}  \frac{\rho_t}{\sqrt{\Vert m \Vert^{2\alpha}_{\alpha}-\rho_t^2}}.
		%2 \bigg(\frac{2}{\alpha}+{\Vert \varphi \Vert}_1\bigg)^2  \rho_t .
		\label{eq: I_2 finite} 
		\end{align}
	\end{Lemma}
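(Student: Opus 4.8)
The plan is to write each integrand as a difference of two \emph{real} exponentials and to estimate it with the elementary inequalities collected in \Cref{lem: inequalities_bundled}, combined with a Gaussian integral at the end. Set $m_1(x)=m(-x)$, $m_2(x)=m(t-x)$ and $c=\ANA{m}$. For $s_1,s_2>0$ the characteristic functions \eqref{eq: char_fct_univ}--\eqref{eq: char_fct_biv} read $\CFA{s_1}\CFA{s_2}=e^{-B}$ with $B=\int_{\RR}\bigl[(s_1m_1)^\alpha+(s_2m_2)^\alpha\bigr]\,\dd x=c(s_1^\alpha+s_2^\alpha)$, while $\CFB{+}{+}=e^{-A}$ and $\CFB{+}{-}=e^{-A'}$ with $A=\int_{\RR}(s_1m_1+s_2m_2)^\alpha\,\dd x$ and $A'=\int_{\RR}\lvert s_1m_1-s_2m_2\rvert^\alpha\,\dd x$. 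Thus both quantities in \eqref{eq: I_1 finite}--\eqref{eq: I_2 finite} are of the form $\lvert e^{-A}-e^{-B}\rvert$ and $\lvert e^{-A'}-e^{-B}\rvert$.

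The key step is a two-sided control of the exponents. Applying \Cref{lem: inequalities_bundled}(a) pointwise with $a=s_1m_1(x)$ and $b=\mp s_2m_2(x)$ and integrating gives $\lvert A-B\rvert\le 2\rho_t\,s_1^{\alpha/2}s_2^{\alpha/2}$ and $\lvert A'-B\rvert\le 2\rho_t\,s_1^{\alpha/2}s_2^{\alpha/2}$, where $\rho_t=\int_{\RR}m_1^{\alpha/2}m_2^{\alpha/2}\,\dd x$ as in \eqref{eq: rho_t}. The \emph{lower} half of the same inequality yields $A,A'\ge B-2\rho_t\,s_1^{\alpha/2}s_2^{\alpha/2}$, and with the abbreviations $p=s_1^{\alpha/2}$, $q=s_2^{\alpha/2}$ the right-hand side is the quadratic form $Q(p,q)=c\,p^2-2\rho_t\,pq+c\,q^2$, which is positive definite since $\rho_t<c$ by Cauchy--Schwarz (if $\rho_t=c$ the claimed bound is $+\infty$ and there is nothing to prove). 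Hence $e^{-A},e^{-A'}\le e^{-Q(p,q)}$. Using $1-e^{-x}\le x$ for $x\ge 0$ together with a short case distinction on the sign of $B-A$ (for $A'$ one additionally needs $A'\le B$, which is \Cref{lem: inequalities_bundled}(b)), I obtain in every case the single pointwise estimate
\[
\bigl\lvert\CFB{+}{+}-\CFA{s_1}\CFA{s_2}\bigr\rvert,\ \bigl\lvert\CFB{+}{-}-\CFA{s_1}\CFA{s_2}\bigr\rvert\ \le\ 2\rho_t\,pq\,e^{-Q(p,q)}.
\]

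The final step is a change of variables and a Gaussian integral. Since $\dd s_1/s_1=\tfrac{2}{\alpha}\,\dd p/p$ and $\dd s_2/s_2=\tfrac{2}{\alpha}\,\dd q/q$, dividing the estimate above by $s_1s_2$ and integrating gives
\[
I_1,\,I_2\ \le\ \frac{4}{\alpha^2}\int_{\RR_+}\int_{\RR_+}\frac{2\rho_t\,pq\,e^{-Q(p,q)}}{pq}\,\dd p\,\dd q=\frac{8\rho_t}{\alpha^2}\int_{\RR_+}\int_{\RR_+}e^{-Q(p,q)}\,\dd p\,\dd q.
\]
Bounding the first-quadrant integral by the full-plane Gaussian integral, $\int_{\RR^2}e^{-Q}=\pi/\sqrt{\det}=\pi/\sqrt{c^2-\rho_t^2}$, and recalling $c^2=\Vert m\Vert_\alpha^{2\alpha}$, this produces the bounds \eqref{eq: I_1 finite}--\eqref{eq: I_2 finite}.

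I expect the main obstacle to be producing \emph{one} pointwise bound on the integrand that is integrable both near the origin---where dividing by $s_1s_2$ is singular---and at infinity. The product form $2\rho_t\,pq\,e^{-Q(p,q)}$ is exactly what resolves this: the prefactor $pq$ cancels the $1/(pq)$ coming from the scale-invariant log-measure, while $e^{-Q}$ supplies the decay; obtaining it requires marrying the Lipschitz-type bound on $\lvert A-B\rvert$ (which is good near $0$) with the quadratic lower bound on $A$ (which is good at $\infty$) in a way valid for both the ``$+$'' and ``$-$'' kernels simultaneously. The factor $\sqrt{\Vert m\Vert_\alpha^{2\alpha}-\rho_t^2}$ then enters naturally as the square root of $\det Q$.
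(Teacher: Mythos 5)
Your proof is correct and essentially coincides with the paper's own proof of \Cref{lemma: ineq}: both arguments rest on \Cref{lem: inequalities_bundled}(a), applied pointwise and integrated, to control the difference of the exponents by $2\rho_t\,s_1^{\alpha/2}s_2^{\alpha/2}$, on the elementary bound $|e^{-x}-1|\le x$, and on the substitution $p=s_1^{\alpha/2}$, $q=s_2^{\alpha/2}$ that reduces everything to a Gaussian integral for the quadratic form $\Vert m\Vert_\alpha^\alpha(p^2+q^2)-2\rho_t\,pq$, whose determinant produces $\sqrt{\Vert m\Vert_\alpha^{2\alpha}-\rho_t^2}$. Your uniform case distinction on the sign of $B-A$ replaces the paper's separate treatment of $\alpha\in(1,2)$ and $\alpha\in(0,1)$ for $I_2$ (the paper silently omits $\alpha=1$, where $I_2=0$), and your full-plane Gaussian bound replaces the paper's normalization by a bivariate normal density; these differences are cosmetic.

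One caveat: what your computation actually yields is $I_1,I_2\le \frac{8\pi}{\alpha^2}\,\rho_t\,\big(\Vert m\Vert_\alpha^{2\alpha}-\rho_t^2\big)^{-1/2}$, \emph{without} the prefactor $\Vert m\Vert_\alpha^{-2\alpha}$ appearing in \eqref{eq: I_1 finite}--\eqref{eq: I_2 finite}, so your closing claim that this ``produces the bounds'' is not literally accurate. This is a defect of the lemma's statement rather than of your argument: the paper's own proof ends with the quantity $\frac{4\pi}{\alpha^2\sigma^2}(1-\tilde\rho_t^2)^{-1/2}$, which equals $\frac{4\pi}{\alpha^2}(\rho_0^2-\rho_t^2)^{-1/2}$ and not $\frac{4\pi}{\alpha^2\Vert m\Vert_\alpha^{2\alpha}}(\rho_0^2-\rho_t^2)^{-1/2}$ as written there. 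Indeed, the stated constant cannot be correct: replacing $m$ by $\lambda m$ leaves $I_1,I_2$ unchanged (substitute $s_i\mapsto s_i/\lambda$) and leaves your bound unchanged, whereas the right-hand sides of \eqref{eq: I_1 finite}--\eqref{eq: I_2 finite} scale as $\lambda^{-2\alpha}$. Since \Cref{lemma: ineq} is invoked in \Cref{thm: SRD_rho} only to conclude integrability in $t$ of $\rho_t\big(\Vert m\Vert_\alpha^{2\alpha}-\rho_t^2\big)^{-1/2}$ over $A$, the discrepancy in the constant is harmless for the paper's purposes, and your derivation establishes exactly what is needed.
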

	
	\begin{proof}
		First, compute for $\alpha \in (0,2)$ the absolute value of the difference of characteristic functions in $I_1$ for any $s_1, s_2 > 0$:
		\begin{align}
		&\vert\CFB{+}{-} - \CFA{s_1}\CFA{s_2}\vert 
		=  \CFB{+}{-} \ \cdot \bigg\vert 1 - \frac{\CFA{s_1}\CFA{s_2}}{\CFB{+}{-}} \bigg\vert\notag \\[2mm]
		&= \CFB{+}{-}  
		\cdot \bigg\vert 1 -  \exp \bigg\{- \int_{\mathbb{R}} \bigg( \underbrace{\vert s_1m(-x)\vert^{\alpha} + \vert s_2m(t-x)\vert^{\alpha}  -\vert s_1m(-x)-s_2m(t-x)\vert^{\alpha}}_{\geq 0 \text{ by \Cref{lem: inequalities_bundled}(\ref{lem: Ungleichen groesser, kleiner null}) }} \bigg) \, \dd x \bigg\} \bigg\vert\notag \\[2mm]
		&
		\leq  \CFB{+}{-}  \cdot \int_{\mathbb{R}} \bigg( \vert s_1m(-x)\vert^{\alpha} + \vert s_2m(t-x)\vert^{\alpha}  -\vert s_1m(-x)-s_2m(t-x)\vert^{\alpha} \bigg) \, \dd x\notag \\[1mm]
		&\hspace{8cm} \Big( \text{by using } \vert e^{-x}-1 \vert \leq  x  \text{ for all } x \geq 0 \Big)  \notag\\[1mm]
		&
		\leq \CFB{+}{-}  \cdot 2s_1^{\alpha/2}  s_2^{\alpha/2} \int_{\mathbb{R}} m^{\alpha/2}(-x)m^{\alpha/2}(t-x)  \, \dd x, \label{eq: estimate_1}
		\end{align}
		where we have used \Cref{lem: inequalities_bundled}(\ref{lem: Vitalii inequality}) in the last inequality. \\
		Similarly, we compute $\vert\CFB{+}{+} - \CFA{s_1}\CFA{s_2}\vert$ for the case $\alpha\in (1,2)$ as
		\begin{align}
		&\vert\CFB{+}{+} - \CFA{s_1}\CFA{s_2}\vert\notag \\[2mm]
		&= \CFA{s_1}\CFA{s_2}  \cdot \bigg\vert \frac{\CFB{+}{+}}{\CFA{s_1}\CFA{s_2}}  -1 \bigg\vert \notag\\[2mm]
		&= \CFA{s_1}\CFA{s_2}  \cdot \bigg\vert   \exp \bigg\{- \int_{\mathbb{R}} \bigg( \underbrace{\vert s_1m(-x)+s_2m(t-x)\vert^{\alpha}  - \vert s_1m(-x)\vert^{\alpha} - \vert s_2m(t-x)\vert^{\alpha}}_{\geq 0 \text{ since } \alpha > 1} \bigg) \, \dd x \bigg\} -1 \bigg\vert\notag \\[2mm] 
		&\leq  \CFA{s_1}\CFA{s_2}  \cdot \int_{\mathbb{R}} \bigg( \vert s_1m(-x)+s_2m(t-x)\vert^{\alpha}  - \vert s_1m(-x)\vert^{\alpha} - \vert s_2m(t-x)\vert^{\alpha} \bigg) \, \dd x\notag \\[2mm]
		&\leq  \CFA{s_1}\CFA{s_2} \cdot 2s_1^{\alpha/2}  s_2^{\alpha/2} \int_{\mathbb{R}} m^{\alpha/2}(-x)m^{\alpha/2}(t-x) \, \dd x\notag\\ 
		&\leq \CFB{+}{-} \cdot 2s_1^{\alpha/2}  s_2^{\alpha/2} \int_{\mathbb{R}} m^{\alpha/2}(-x)m^{\alpha/2}(t-x)  \, \dd x, \label{eq: estimate_2}
		\end{align}
		where, again, we have used \Cref{lem: inequalities_bundled}(\ref{lem: Vitalii inequality}) in the last inequality. 
		Using the same arguments, we get for the case $\alpha\in (0,1)$ that
		\begin{align}
		&\vert\CFB{+}{+} - \CFA{s_1}\CFA{s_2}\vert=  \CFB{+}{+} \ \cdot \bigg\vert 1 - \frac{\CFA{s_1}\CFA{s_2}}{\CFB{+}{+}} \bigg\vert\notag \\[2mm]
		&= \CFB{+}{+}  
		\cdot \bigg\vert 1 -  \exp \bigg\{- \int_{\mathbb{R}} \bigg( \underbrace{\vert s_1m(-x)\vert^{\alpha} + \vert s_2m(t-x)\vert^{\alpha}  -\vert s_1m(-x)+s_2m(t-x)\vert^{\alpha}}_{\geq 0 \text{ since } \alpha < 1} \bigg) \, \dd x \bigg\} \bigg\vert\notag\\
		&\leq  \CFB{+}{+} \cdot 2s_1^{\alpha/2}  s_2^{\alpha/2} \int_{\mathbb{R}} m^{\alpha/2}(-x)m^{\alpha/2}(t-x) \, \dd x\notag\\
		&\leq \CFB{+}{-} \cdot 2s_1^{\alpha/2}  s_2^{\alpha/2} \int_{\mathbb{R}} m^{\alpha/2}(-x)m^{\alpha/2}(t-x)  \, \dd x, \label{eq: estimate_21}
		\end{align}
		Now, plugging the estimates \eqref{eq: estimate_1},\eqref{eq: estimate_2} and \eqref{eq: estimate_21} into $I_1$ and $I_2$ we get
		\begin{align}
		\nonumber  I_1,I_2 &\leq 2  \bigg( \int_{\RR_+} \int_{\RR_+}\frac{\CFB{+}{-}}{s_1^{1-\alpha/2}s_2^{1-\alpha/2}} \, \dd s_1 \, \dd s_2 \bigg) \bigg( \int_{\mathbb{R}} m^{\alpha/2}(-x)m^{\alpha/2}(t-x)  \, \dd x \bigg)\,
		\\[2mm] 
		\label{ineq: I1} &= 2  \bigg( \int_{\RR_+} \int_{\RR_+}\frac{\CFB{+}{-}}{s_1^{1-\alpha/2}s_2^{1-\alpha/2}} \, \dd s_1 \, \dd s_2 \bigg) \rho_t. %\\[5mm] 
		%\nonumber I_2 &\leq 2 \bigg( \int_{\RR_+} \int_{\RR_+}\frac{\CFA{s_1}\CFA{s_2}}{s_1^{1-\alpha/2}s_2^{1-\alpha/2}} \, \dd s_1 \, \dd s_2 \bigg) \bigg( \int_{\mathbb{R}} m^{\alpha/2}(-x)m^{\alpha/2}(t-x)  \, \dd x \bigg)\, \\[2mm]
		%\label{ineq: I2}&= 2 \bigg( \int_{\RR_+} \int_{\RR_+}\frac{\CFA{s_1}\CFA{s_2}}{s_1^{1-\alpha/2}s_2^{1-\alpha/2}} \, \dd s_1 \, \dd s_2 \bigg) \rho_t.
		\end{align}
		
		We estimate the integral in \eqref{ineq: I1} % and \eqref{ineq: I2}
		from above via the density of a bivariate normal law. Thus,
		\begin{align}
		\int_{\RR_+} &\int_{\RR_+}\frac{\CFB{+}{-}}{s_1^{1-\alpha/2}s_2^{1-\alpha/2}} \, \dd s_1 \, \dd s_2  \notag \\[2mm]
		&= \int_{\RR_+} \int_{\RR_+} \bigg( \frac{1}{s_1 s_2} \bigg)^{1-\tfrac{\alpha}{2}} \exp \bigg\{ -  \int_{\mathbb{R}} \vert s_1m(-x)+(-s_2)m(t-x)\vert^{\alpha} \, \dd x \bigg\}  \dd s_1 \, \dd s_2 \notag \\[2mm] 
		&= \int_{\RR_+} \int_{\RR_+} \bigg( \frac{1}{s_1 s_2} \bigg)^{1-\tfrac{\alpha}{2}}\exp \bigg\{-\int_{\mathbb{R}} \vert s_1m(-x)\vert^{\alpha} \, \dd x - \int_{\mathbb{R}} \vert (-s_2)m(t-x)\vert^{\alpha} \, \dd x \notag \\[2mm]
		& +  \int_{\mathbb{R}} \vert s_1m(-x)\vert^{\alpha} + \vert (-s_2)m(t-x)\vert^{\alpha}  -\vert s_1m(-x)+(-s_2)m(t-x)\vert^{\alpha} \, \dd x \bigg\} \dd s_1 \, \dd s_2 \notag \\[2mm]
		&\leq  \int_{\RR_+} \int_{\RR_+} \bigg( \frac{1}{s_1 s_2} \bigg)^{1-\tfrac{\alpha}{2}} \exp \bigg\{-  s_1^{\alpha}\int_{\mathbb{R}}  m^{\alpha}(x) \, \dd x - s_2^{\alpha} \int_{\mathbb{R}}  m^{\alpha}(x) \, \dd x \notag \\[2mm]
		& \hspace{2cm}+  2 (s_1 s_2)^{\alpha/2}\int_{\mathbb{R}}  m^{\alpha/2}(-x)m^{\alpha/2}(t-x)\, \dd x \bigg\} \dd s_1 \, \dd s_2 \notag\\[2mm]
		&= \int_{\RR_+} \int_{\RR_+} \bigg( \frac{1}{s_1 s_2} \bigg)^{1-\tfrac{\alpha}{2}} \exp \bigg\{ -(s_1^{\alpha/2} \sigma)^2 - (s_2^{\alpha/2} \sigma)^2 + 2 \tilde{\rho_t} (\sigma s_1^{\alpha/2}) (\sigma s_2^{\alpha/2}) \bigg\} \, \dd s_1 \, ds_2, \label{eq: I_1 before substitution}
		\end{align}
		where we denoted
		\begin{align}
		\sigma^2= \rho_0=\int_{\mathbb{R}} m^{\alpha} (x)\, \dd x = \Vert m \Vert_{\alpha}^{\alpha} \quad \text{ and } \quad \tilde{\rho_t}=\frac{\rho_t}{\sigma^2}= \frac{1}{\sigma^2}\int_{\mathbb{R}} \vert m(-x)m(t-x)\vert^{\alpha/2} \, \dd x \label{eq: sigma_and_rho_sigma defined}
		\end{align}
		in the last equality.
		Now,  we have a substitution
		\begin{align*}
		\frac{y_i}{\sqrt{2 (1-\tilde{\rho_t}^2)}} = \sigma s_i^{\alpha/2},  \text{ or }  s_i = \bigg( \frac{y_i}{\sigma\sqrt{2 (1-\tilde{\rho_t}^2)}} \bigg)^{2/\alpha},\quad i=1, 2,
		\end{align*}
		so that
		\begin{equation*}
		ds_i =  \frac{\frac{2}{\alpha}y_i^{\tfrac{2}{\alpha}-1}}{\big(\sigma\sqrt{2 (1-\tilde{\rho_t}^2)}\,\big)^{2/\alpha}} \ dy_i,\quad i=1,2. 
		\end{equation*}
		Then, relation \eqref{eq: I_1 before substitution} rewrites
		\begin{align*}
		&\int_{\RR_+} \int_{\RR_+}\bigg( \frac{\big(2\sigma^2 (1-\tilde{\rho_t}^2)\,\big)^{2/\alpha}}{y_1^{2/\alpha}y_2^{2/\alpha}} \bigg)^{1-\tfrac{\alpha}{2}} 
		\, \exp \bigg\{ - \frac{y_1^2 + y_2^2 - 2 \tilde{\rho_t} y_1 y_2}{2(1-\tilde{\rho_t}^2)} \bigg\} 
		\,  \frac{( \frac{2}{\alpha} )^{^2}\, y_1^{\tfrac{2}{\alpha}-1} y_2^{\tfrac{2}{\alpha}-1}}{\big(2\sigma^2 (1-\tilde{\rho_t}^2)\,\big)^{2/\alpha}}\ dy_1 \, dy_2 \\[2mm] 
		&= \big(2\sigma^2 (1-\tilde{\rho_t}^2)\,\big)^{-1} \bigg( \frac{2}{\alpha} \bigg)^2 \int_{\RR_+} \int_{\RR_+} \exp \bigg\{ - \frac{y_1^2 + y_2^2 - 2 \tilde{\rho_t} y_1 y_2}{2(1-\tilde{\rho_t}^2)} \bigg\}\ dy_1 \, dy_2 \\[2mm]
		&= \frac{4\pi}{\alpha^2\sigma^2}\big(1-\tilde{\rho_t}^2\,\big)^{-1/2}   \underbrace{\int_{0}^{\infty} \int_{0}^{\infty} \frac{1}{2\pi \sqrt{1-\tilde{\rho_t}^2}} \exp \bigg\{ - \frac{y_1^2 + y_2^2 - 2 \tilde{\rho_t} y_1 y_2}{2(1-\tilde{\rho_t}^2)} \bigg\}\, \dd s_1 \, ds_2}_{\leq 1 \text{ as a density of bivariate normal distribution} } \\[2mm]
		&\leq \frac{4\pi}{\alpha^2\sigma^2} \big(1-\tilde{\rho_t}^2\,\big)^{-1/2} = \frac{4\pi}{\alpha^2 \Vert m \Vert^{2\alpha}_{\alpha}} \big(\rho_0^2-\rho_t^2\,\big)^{-1/2} < \infty \text{ for } \rho_t \neq \rho_0.
		\end{align*}
		 Now,  show that $\big(\rho_0^2-\rho_t^2\,\big)^{-1/2}$ is only infinite on a null set. More specifically, we show that $\rho_t = \rho_0$ if and only if $t = 0$. Recall that the Cauchy-Schwarz inequality (cf.~\cite{Simon_Reed}, Theorem S.3.) states that
		\begin{align*}
		\rho_t = \int_{\mathbb{R}} m^{\alpha/2}(-x)m^{\alpha/2}(t-x)\, \dd x \leq \int_{\mathbb{R}} m^{\alpha}(-x)\, \dd x = \rho_0,
		\end{align*}
		where equality holds if and only if there exists $\lambda_t\in \mathbb{R}$ such that $m^{\alpha/2}(-x)=\lambda_t m^{\alpha/2}(t-x) \text{ a.e}$. In this case, relation $\rho_t  = \rho_0$ yields $\lambda_t =1$.
		Note that due to $m$ being nonnegative, we can rewrite the condition as $m(-x) = m(t-x) \text{ a.e.}$ or  $m(x)= m(x+t) \text{ a.e.}$; hence, $m$ is a $t$-periodic function with $m(x) > 0$ on a set of positive Lebesgue measure which contradicts $m \in \lp{\alpha}{\mathbb{R}}$ because in that case $m(x) \rightarrow 0$ as $x \rightarrow \pm \infty$. Consequently, $\rho_t = \rho_0$ if and only if $t = 0$.

	\end{proof}
	
\textit{Proof of \Cref{thm: LRD_min_m}:}
 Let us choose $\mu= \delta_0$ where $\delta_0$ is the Dirac measure concentrated at zero. Obviously this measure is finite and by \Cref{Corr: SRD and LRD symmetric} we get for $t \in \mathbb{R}$:
		\begin{align}
		&\int_{\mathbb{R}} \int_{\mathbb{R}} \Cov (\mathds{1} \{ X(0) > u \}, \mathds{1} \{ X(t) > v \})\, \mu(\dd u)\, \mu(\dd v) \notag \\[2mm]
		&= \frac{1}{2 \pi^2} \int_{\RR_+} \int_{\RR_+} \frac{1}{s_1s_2}  \bigg( \CFB{+}{-}  - \CFB{+}{+} \bigg) \, \dd s_1\, \dd s_2 \label{eq: integral for LRD}.
		\end{align}
		We denote $A= \vert s_1m(-x) + s_2m(t-x) \vert ^{\alpha}$ and $B= \vert s_1m(-x) - s_2m(t-x) \vert ^{\alpha}$. Then, by $e^{x}-1 \geq x$ for all $x \in \mathbb{R}$ we estimate 
		\begin{align}
		\CFB{+}{-}  - \CFB{+}{+} &= \exp\bigg\{ - \int_{\mathbb{R}} B\, \dd x \bigg\} - \exp\bigg\{ - \int_{\mathbb{R}} A\, \dd x \bigg\}\notag\\[2mm]
		&= \exp\bigg\{ - \int_{\mathbb{R}} A\, \dd x \bigg\} \bigg( \exp\bigg\{ \int_{\mathbb{R}} [A- B]\, \dd x \bigg\} -1 \bigg)\notag\\[2mm]
		&\geq \exp\bigg\{ - \int_{\mathbb{R}} A\, \dd x \bigg\} \int_{\mathbb{R}} [A- B]\, \dd x \label{eq: first bound LRD phi difference}.
		\end{align}
		
		%The difference $A-B$, due to \Cref{lem: inequalities_bundled}(\ref{c i}) and (\ref{c ii}) , is bounded by
		%\begin{align*}
		%A-B &= (s_1m(-x)+s_2m(t-x))^{\alpha} - \vert s_1m(-x) - s_2m(t-x) \vert ^{\alpha} \\[2mm]
		%&\geq (s_1m(-x))^\alpha + (s_2m(t-x))^{\alpha} - \Big\vert (s_1m(-x))^\alpha - (s_2m(t-x))^{\alpha} \Big\vert
		%\end{align*}
		Thus, we obtain a lower bound for the right hand side of \eqref{eq: first bound LRD phi difference}:
		%\begin{align*}
		%&\exp \bigg\{ - \int_{\mathbb{R}}
		%\vert s_1m(-x) + s_2m(t-x) \vert^{\alpha}\, \dd x \bigg\} \\
		%&\hspace{2cm} \times \int_{\mathbb{R}}\underbrace{ (s_1m(-x))^{\alpha} + (s_2m(t-x))^{\alpha} - \vert (s_1m(-x))^{\alpha} - (s_2m(t-x))^{\alpha}}_{= 2 ((s_1m(-x))^{\alpha} \wedge (s_2m(t-x))^{\alpha}) } \vert\, \dd x \\[2mm]
		%&= 2 \exp \bigg\{ - \int_{\mathbb{R}} 
		%\vert s_1m(-x) + s_2m(t-x) \vert^{\alpha}\, \dd x \bigg\} \int_{\mathbb{R}} ((s_1m(-x))^{\alpha} \wedge (s_2m(t-x))^{\alpha})  \, \dd x.
		%\end{align*}
		\begin{align*}
		&\exp \bigg\{ - \int_{\mathbb{R}}
		\vert s_1m(-x) + s_2m(t-x) \vert^{\alpha}\, \dd x \bigg\} \\
		&\hspace{2cm} \times \int_{\mathbb{R}}\left[ (s_1m(-x) + s_2m(t-x))^{\alpha} - \vert s_1m(-x) - s_2m(t-x) \vert^{\alpha}\right]\, \dd x.
		\end{align*}
		
		Note that for any $a,b>0,$ $(a+b)^{\alpha}=((a+b)^2)^{\alpha/2}\leq 2^{\alpha/2}(a^2+b^2)^{\alpha/2}\leq 2^{\alpha/2}(a^\alpha+b^\alpha).$  Thus, for $s_1,s_2\in[0,1]$
		\begin{align*}\exp \bigg\{ - \int_{\mathbb{R}}
		\vert s_1m(-x) + s_2m(t-x) \vert^{\alpha}\, \dd x \bigg\}\geq \exp \bigg\{ - (s_1^\alpha+s_2^{\alpha})2^{\frac{\alpha}{2}}
		\| m\|^\alpha_\alpha \bigg\}\geq e^{-4\|m\|^\alpha_\alpha}.
		\end{align*}
		Consequently, it holds
		\begin{align*}
		 &\int_{0}^{\infty} \int_{0}^{\infty} \frac{1}{s_1s_2}  \bigg( \CFB{+}{-}  - \CFB{+}{+} \bigg) \, \dd s_1\, \dd s_2\geq \int_{0}^{1} \int_{0}^{1} \frac{1}{s_1s_2}  \bigg( \CFB{+}{-}  - \CFB{+}{+} \bigg) \, \dd s_1\, \dd s_2 \notag\\[2mm] 
		\geq  &\int_{0}^{1} \int_{0}^{1} \frac{1}{s_1s_2}  
		\bigg( \exp \bigg\{ -\int_{\mathbb{R}} (s_1m(-x) + s_2m(t-x))^\alpha \, \dd x \bigg\}\notag \\[2mm]  
		& \times \int_{\mathbb{R}} \Big( (s_1m(-x) + s_2m(t-x))^\alpha-|s_1m(-x) - s_2m(t-x)|^\alpha\Big)\, \dd x \bigg) \, \dd s_1\, \dd s_2 \notag \\[2mm]
		\geq  &e^{-4\|m\|^\alpha_\alpha}\int_{0}^{1} \int_{0}^{1}\int_{\RR}\frac{ (s_1m(-x) + s_2m(t-x))^\alpha-|s_1m(-x) - s_2m(t-x)|^\alpha}{s_1s_2}  \, \dd x \,\dd s_1\, \dd s_2. 
		\end{align*}
		Now, by Fubini's theorem and \Cref{lem: inequalities_bundled}  (\ref{lem: inequalites LRD2}),   this is greater or equal to
		\begin{equation*}
		 e^{-4\|m\|^\alpha_\alpha}C_\alpha \int_{\mathbb{R}}  m^{\alpha}(-x) \wedge m^{\alpha}(t-x) \, \dd x.
		\end{equation*}

		Thus, for $\mu = \delta_0$ we have that
		\begin{align*}
		\int_{\mathbb{R}} \int_{\mathbb{R}} \Cov (\mathds{1} \{ X(0) > u \}, \mathds{1} \{ X(t) > v \})\, \mu(\dd u)\, \mu(\dd v) 
		\geq c \int_{\mathbb{R}} \big( m^\alpha(-x) \wedge m^\alpha(t-x) \big) \, \dd x. 
		%\textcolor{Vitalii}{ \geq c \rho_t }.
		\end{align*}
		
		Consequently, by \Cref{def: LRD_Spodarev} and Fubini's theorem, $X$ is long range dependent if
		\begin{align*}
		\int_{\mathbb{R}} \int_{\mathbb{R}} \big( m^\alpha(-x) \wedge m^\alpha(t-x) \big) \, \dd x \, \dd t &=\int_{\mathbb{R}} \int_{\mathbb{R}} \big( m^\alpha(-x) \wedge m^\alpha(t-x) \big) \, \dd t \, \dd x \\[2mm]
		&=\int_{\mathbb{R}} \int_{\mathbb{R}} \big( m^\alpha(x) \wedge m^\alpha(t) \big) \, \dd t \, \dd x = \infty. 
		\end{align*}
		%\textcolor{Vitalii}{Consequently, $X$ is long range dependent if $\rho_t \notin \lp{1}{\mathbb{R}}$ or, equivalently, $m\notin \lp{\alpha/2}{\mathbb{R}}.$}

\end{document}